\documentclass[a4paper,11pt,reqno]{amsart}
\usepackage[centering,includeheadfoot,margin=2.5 cm]{geometry}
\usepackage[percent]{overpic}
\usepackage{todonotes}
\usepackage{enumitem}
\usepackage{graphics,enumitem,epsfig,textcomp}
\usepackage{amsfonts,amsmath,amssymb,euscript,color,mathrsfs, bbm}
\usepackage[utf8]{inputenc}	
\usepackage{float} 

\usepackage[caption=false,subrefformat=parens,labelformat=parens]{subfig}
\usepackage{url}
\usepackage[symbol]{footmisc}
\usepackage{hyperref}
\usepackage{cases}
\usepackage{bigints}
\hypersetup{
    colorlinks=true,
    linkcolor=blue,
    filecolor=blue,      
    urlcolor=blue,
    citecolor=blue,
    }
\usepackage{comment}

\usepackage[square,numbers]{natbib}
\usepackage{amsmath,amsthm,amssymb,amsfonts,lineno}
\usepackage{esint}
\usepackage{braket, leftindex}


\numberwithin{equation}{section}
\newtheorem{definition}{Definition}
\newtheorem{theorem}{Theorem}

\newtheorem{proposition}{Proposition}
\newtheorem{remark}{Remark}



\def\N{\mathbb{N}}
\def\R{\mathbb{R}}
\def\C{\hbox{\rlap{\kern.24em\raise.1ex\hbox{\vrule height1.3ex width.9pt}}C}}

\def\P{\hbox{\rlap{I}\kern.16em P}}
\def\Q{\hbox{\rlap{\kern.24em\raise.1ex\hbox
      {\vrule height1.3ex width.9pt}}Q}}
\def\M{\hbox{\rlap{I}\kern.16em\rlap{I}M}}
\def\Z{\hbox{\rlap{Z}\kern.20em Z}}
\def\({\begin{eqnarray}}
\def\){\end{eqnarray}}
\def\[{\begin{eqnarray*}}
\def\]{\end{eqnarray*}}
\def\part#1#2{\frac{\partial #1}{\partial #2}}


\def\pmb#1{\setbox0=\hbox{$#1$}
  \kern-.025em\copy0\kern-\wd0
  \kern-.05em\copy0\kern-\wd0
  \kern-.025em\raise.0433em\box0 }
\def\bar{\overline}

\def\weakconv{\rightharpoonup}

\def\N{\mathbb{N}}
\def\R{\mathbb{R}}

\def\epsilon{\varepsilon}

\def\P{\mathbb{P}}
\def\Q{\mathbb{Q}}

\usepackage{cancel}
\title{PDE Models for Deep Neural Networks: Learning Theory, Calculus of Variations and Optimal Control}
\date{October 2024}

\begin{document}
\pagenumbering{gobble}
\maketitle
\pagenumbering{arabic}

\centerline{
     {\large Peter Markowich}\footnote{Mathematical and Computer Sciences and Engineering Division,
         King Abdullah University of Science and Technology,
         Thuwal 23955-6900, Kingdom of Saudi Arabia;
         {\it peter.markowich@kaust.edu.sa}, and
         Faculty of Mathematics, University of Vienna,
        Oskar-Morgenstern-Platz 1, 1090 Vienna;
         {\it peter.markowich@univie.ac.at}}\qquad
         {\large Simone Portaro}\footnote{Mathematical and Computer Sciences
            and Engineering Division,
         King Abdullah University of Science and Technology,
         Thuwal 23955-6900, Kingdom of Saudi Arabia;
         {\it simone.portaro@kaust.edu.sa}}
     }
\medskip \medskip

\section*{Abstract}
We propose a partial differential-integral equation (PDE) framework for deep neural networks (DNNs) and their associated learning problem by taking the continuum limits of both network width and depth. The proposed model captures the complex interactions among hidden nodes, overcoming limitations of traditional discrete and ordinary differential equation (ODE)-based models. We explore the well-posedness of the forward propagation problem, analyze the existence and properties of minimizers for the learning task, and provide a detailed examination of necessary and sufficient conditions for the existence of critical points.

Controllability and optimality conditions for the learning task with its associated PDE forward problem are established using variational calculus, the Pontryagin Maximum Principle, and the Hamilton-Jacobi-Bellman equation, framing the deep learning process as a PDE-constrained optimization problem. In this context, we prove the existence of viscosity solutions for the latter and we establish optimal feedback controls based on the value functional. This approach facilitates the development of new network architectures and numerical methods that improve upon traditional layer-by-layer gradient descent techniques by introducing forward-backward PDE discretization.

The paper provides a mathematical foundation for connecting neural networks, PDE theory, variational analysis, and optimal control, partly building on and extending the results of \cite{liu2020selection}, where the main focus was the analysis of the forward evolution. By integrating these fields, we offer a robust framework that enhances deep learning models' stability, efficiency, and interpretability.

\section{Introduction}
Deep learning enables computational models with multiple processing layers to learn data representations at various levels of abstraction. This approach has significantly advanced the state-of-the-art in fields like speech recognition, visual object recognition \cite{bengio2009learning, lecun2015deep} and extends to areas such as drug discovery and genomics \cite{vamathevan2019applications}. By employing the backpropagation algorithm, deep learning uncovers complex structures in large datasets, guiding the adjustment of internal parameters to refine the representation at each layer based on the previous one. Given its extensive use, establishing a robust mathematical framework to analyze Deep Neural Networks (DNNs) is essential.

DNNs excel in supervised learning, particularly in scenarios where the data-label relationship is highly nonlinear. Their multiple layers allow DNNs to capture complex patterns by transforming features through each layer, effectively filtering the information content. 
The term "depth" of a DNN refers to its total number of layers, including both the hidden and output layers. The term "width" of a DNN refers to the number of neurons or units within each layer. Thus, a network's depth indicates its hierarchical level of data processing, while its width indicates the complexity and capacity of each layer to represent features.

In the literature, discrete neural networks are predominant because they are simple to program and they have excellent approximation properties \cite{hornik1989multilayer}. Taking the depth continuum limit transforms the discrete network into a dynamical system, which facilitates the understanding of complex discrete structures.

Previous research on the dynamical systems approach to deep learning has concentrated on algorithm design and enhancing network architecture using ordinary differential equations (ODEs) to model residual neural networks \cite{chang2017multi, haber2018learning}. However, ODE models do not show the structure of hidden nodes in relation to network width. To address this gap, we propose a partial differential-integral equation (PDE) model for DNNs that is derived via continuum limits in both width and depth and accounts for multiple, different, weakly linearly independent initial data. Consequently, the learning problem can be view as a data-fitting approach and formulated as a PDE-constrained optimization problem. The scenario with a single learning datum, albeit limited for a comprehensive study of DNNs, has been extensively examined in \cite{liu2020selection} as an initial approach. In this work we go far beyond the previous study, analyzing the induced coupling effects of multiple learning data on the network dynamics. In real-world applications of DNNs, using multiple learning data instead of a single datum is crucial, as it allows the model to process bigger data sets and deal with different inputs, capturing more complex patterns and relationships. This leads to improved robustness, accuracy, and performance in tasks where the variability and complexity of real-world data cannot be adequately represented by a single data point as in previous works. In a mathematical framework, the difficulty of the controllability of the forward problem and the Hamilton-Jacobi-Bellman equation becomes much richer in the multi-data setting, which is one of the novelties of our approach. Note that a key advantage of our PDE model over the ODE model \cite{haber2017stable} is its ability to capture the intrinsic dynamics among hidden units. 

Additionally, by discretizing forward and backward PDE problems using numerical methods, we can develop network architectures distinct from those based on the empirical explicit Euler scheme, which is integral to the depth continuum process. The diverse tools available in numerical analysis for PDEs provide enhanced stability, efficiency, and speed compared to traditional layer-by-layer iteration techniques.

In many applications, it is practical to limit the learning parameters to bounded sets, transforming the minimization process into a control theory problem. This approach results in coupled forward-backward PDEs connected through optimal controls. Consequently, the deep learning problem can be studied within the framework of mathematical control theory \cite{evans2005introduction, zabczyk2020mathematical}, following the Pontryagin Maximum Principles as described in \cite{lewis2006maximum, pontryagin2018mathematical} or the Dynamic Programming Principle \cite{evans2005introduction} through the Hamilton-Jacobi-Bellman equation. While the former only provides a necessary condition for optimality the latter also gives (in a sense) a 'sufficient' condition albeit at the expense of much greater complexity. The intersection of deep learning, dynamical systems, and optimal control has garnered growing interest \cite{chang2017multi, haber2017stable, han2019mean, li2018maximum, li2017deep, liu2020selection, sonoda2017double}. A notable advantage of this approach is its explicit consideration of the compositional structure in the time evolution of dynamical systems, paving the way for novel algorithms and network architectures. Numerical methods from control theory and mean field games can then replace traditional techniques like adapted gradient descent used in neural networks. Often, constraining the parameter space proves more effective than using regularization methods such as Tikhonov regularization.

We remark  that (one of) the main tasks of AI is to provide reasonably accurate models for data classification and functional data approximation. In the framework of our deep residual network approach this is done by first determining (approximate) optimal controls from the learning problem, with a set of initial/output learning data,  and then running the forward evolution with any given initial data, using the previously determined 'optimal' controls.  The model output is obtained  by applying a final layer affine linear transformation (whose parameters are also determined in the learning process) followed by the final layer activation. If the approximation quality is considered insufficient, then more data sets are added and the learning problem is rerun.

The paper is structured as follows. In Section \ref{section:section_1}, we introduce the concepts of discrete and residual neural networks and discuss the limit procedure that leads to the learning problem, which is formulated as a PDE-constrained optimization problem. In Section  \ref{section:forward_back}, we address the well-posedness of the forward propagation, discuss critical points of the learning task, by computing the gradient of the loss functional, which gives rise to the backward problem. We explore necessary and sufficient conditions for the existence of critical points. Section \ref{section:controllability} covers the controllability of the forward problem, demonstrating that in the single-state case, the system is locally controllable. However, in the multi-state case, controllability is generally not achieved, which points to an instability phenomenon and motivates to constrain the control space. In Section \ref{section:max_principle}, we apply the Pontryagin Maximum Principle to derive necessary conditions for the existence of optimal controls in forward propagation. Finally, in Section \ref{section:HJB}, we examine the value functional associated with forward propagation and the corresponding Hamilton-Jacobi-Bellman equation, proving the existence of viscosity solutions for the latter and establish optimal feedback controls based on the value function.

\section{Discrete Residual Neural Network, Limits and Learning Problem} \label{section:section_1}
A discrete neural network can be described as a recursive function $\Phi : \R^{M_0} \rightarrow \R^{M_L}$, where $M_k \in \N$ represents the number of neurons at each layer $k = 0, \dots, L$. We define $\Phi$ as
\begin{align*}
    \Phi = L_{L} \circ F_{L-1} \circ \dots \circ L_2 \circ F_1 \circ L_1.
\end{align*}
Here $L_k : \R^{M_{k-1}} \rightarrow \R^{M_k}$ is an affine linear map for $k=1, \dots, L$ defined by
\begin{align*}
    L_k(x) = a_k - B_k x,
\end{align*}
where $a_k$ are $M_k$-dimensional vectors called network biases, and $B_k$ are $M_{k} \times M_{k-1}$ matrices called network weights. $F_{k} : \R^{M_k} \rightarrow \R^{M_k}$ is a non linear map given by
\begin{align*}
    F_{k} (\xi) = \left( \sigma (\xi_1), \dots, \sigma (\xi_{M_K}) \right) =: \sigma \left( \xi \right),
\end{align*}
where $\sigma : \R \rightarrow \R$ is the activation function, typically chosen to be a non-decreasing function such as a sigmoid, or a rectified linear unit (ReLU) \cite{berner2021modern}. Here, $\sigma$ acts component-wise, and we slightly abuse notation in the above definition of the map $F_{k}$. One of the most notable properties of the network $\Phi$ is its approximation capabilities. Indeed, it has been proven in \cite{hornik1989multilayer, perekrestenko2018universal} that any continuous function can be approximated with arbitrary accuracy by a multilayer neural network on compact sets, by choosing appropriate weights and biases. This means that for every $f \in C(\R^{M_0})$ and $\forall \varepsilon > 0$, there exists a multilayer network $\Phi$ (constructed as described above) such that $ \| f - \Phi \|_{L^{\infty}(K)} \le \varepsilon$ for $K \subset \R^{M_0}$, where $K$ is a compact set.

Residual neural networks differ slightly from the general neural network model presented above. In this case, $M_k = M$ for all $k = 0, \dots, L$. Denoting the state of the network at layer $k$ by $z_k$, we have
\begin{align*}
    \begin{cases}
    z_{k+1} = z_k + \sigma \left( a_k - B_k z_k \right) \\
    z_0 = x,
    \end{cases}
\end{align*}
with $x \in \R^M$ given.
This resembles the explicit Euler scheme for ODEs, up to a rescaling of the activation function $\sigma \rightarrow \sigma \Delta t$, where $\Delta t << 1$ is the artificially introduced layer width. We underline that residual networks are particularly useful because they prevent the exploding or vanishing gradient problem, which may prevent lower layers from training at all \cite{berner2021modern}. 

A typical high dimensional example for the application of residual networks arises in image processing where unprocessed gray scale $M$ pixel images, each represented by a vector $x \in \R^M$ are mapped into the processed version $z_k(x)$.

The network has width $M$ and depth $L$ at this stage. Setting $T = \Delta t L$ our first goal is to let $\Delta t \rightarrow 0$ while keeping $T$ fixed (which means $L \rightarrow \infty$). This process -- called the infinite depth limit -- will provide us with an ODE system for the state $z$.

With the introduction of the artificial time $t$, we set $t_k = k \Delta t$ and we allow the network biases and weights and the network status to depend on time, i.e., $a^{\Delta t}_k, B_k^{\Delta t}, z^{\Delta t}_k$ where the superscript underlines the dependence on $\Delta t$. Then, we build piecewise linear functions $a^{\Delta t} := a^{\Delta t} (t), B^{\Delta t} := B^{\Delta t}(t) $ by interpolation:
\begin{align*}
    a^{\Delta t}(t_k) = a^{\Delta t}_k, \quad B^{\Delta t} (t_k) = B^{\Delta t}_k, \quad k=0, \dots, L.
\end{align*}
We also define $z^{\Delta t} := z^{\Delta t}(t)$ on $[0, T]$ through the iterative process
\begin{align*}
    z^{\Delta t}(t + \Delta t) &= z^{\Delta t} (t) + \Delta t \sigma \left( a^{\Delta t}(t) - B^{\Delta t}(t) z^{\Delta t}(t) \right) &&\quad 0 \le t \le T - \Delta t \\
    z^{\Delta t}(t) &= x &&\quad 0 \le t \le \Delta t
\end{align*}
with $x = (x_1, \dots, x_M)^{\mathrm{tr}} \in \R^M$.

With suitable hypotheses on the parameters $a^{\Delta t}$, $B^{\Delta t}$ and on the activation function $\sigma$ it is possible to pass to the limit $\Delta t \rightarrow 0$ \cite[Theorem 2.1]{liu2020selection} which leads to the system of $M$ coupled ODEs
\begin{align}
    \label{eq:z_ODE}
    \begin{cases}
        \dot{z} = \sigma \left( a(t) - B(t) z \right) \quad 0 \le t \le T \\
        z(t=0) = x.
    \end{cases}
\end{align}
Here $a(t) = \lim_{\Delta t \rightarrow 0} a^{\Delta t}(t)$, $B(t) = \lim_{\Delta t \rightarrow 0} B^{\Delta t}(t)$. 
We remark that the solution of this problem depends on the labeled datum $x \in \R^M$, i.e., $z(t;x) = z(t) = \left( z_1(t), \dots, z_M(t) \right)^{\mathrm{tr}}$.

In real applications we typically train the network using a large number $N \in \N$ of data sets. We therefore consider a set of initial conditions $\left( x^{(1)}, \dots, x^{(N)} \right)$ and for each data point $x^{(i)}$, we have a system of $M$ nonlinearly coupled ODEs \eqref{eq:z_ODE}, thus producing solutions $\left( z^{(1)}(t), \dots, z^{(N)}(t) \right)$.

We now have to approximate the function $\Phi(x)$ with $z(t;x)$ by choosing appropriate parameter functions $a(t), B(t)$. This process is called supervised learning where we use labeled datasets to train algorithms to predict outcomes and recognize patterns. At this point the parameter functions to be trained in the network are $a(t)$ and $B(t)$.

The next step we want to explore is to take the infinite width limit, i.e., $M \rightarrow \infty$. This is particularly useful as for $M < \infty $ we have several limitations, for instance in image processing \cite{chan2005image}. There, it is very important to analyze geometric features of images (like edges) which is much more intuitive in a continuous framework.
Let us choose $d \in \N$ and $Y \subset \R^d$ an open Jordan set. We partition $Y$ in $M$ disjoint sets such that $\overline{Y} = \bigcup_{k=1}^M \overline{Y_k}$ and for any Lebesgue integrable function $f : Y \rightarrow \R$ we have
\begin{align*}
    \int_{Y} f(y) dy = \sum_{k=1}^M \int_{Y_k} f(y) dy.
\end{align*}
It is worth noting that the label set $Y$ and its dimension $d$ can be chosen freely to contribute to the network architecture and are part of the modeling choices.

The underlying idea of this construction is better understood through the following example. Consider a set of black and white images that we aim to train our network on using labeled data, for example, in image sharpening, denoising, feature extraction. As mentioned before, each vector $x^{(j)} \in \R^M$ represents the gray scale values of a black and white image composed of $M$ pixels. Using the system of ODEs \eqref{eq:z_ODE}, we compute the processed image gray scale values $z^{(j)}(t; x^{(j)})$ for parameter functions $a(t), B(t)$, which will be determined through the training process. 

In this application it makes sense to set $d=2$ and take $Y = (0,1)^2$, the unit square as the image domain. To each $z^{(j)}_k(t)$, we associate a label set (or neuron identifier) $Y_k$ for all $k= 1, \dots, M$ as in Figure \ref{fig:label}. 
An analogous construction can be done with black and white movies, instead of images, where in this case we will choose $d=3$.

\begin{figure}[h]
    \centering
    \includegraphics[width=0.75\linewidth]{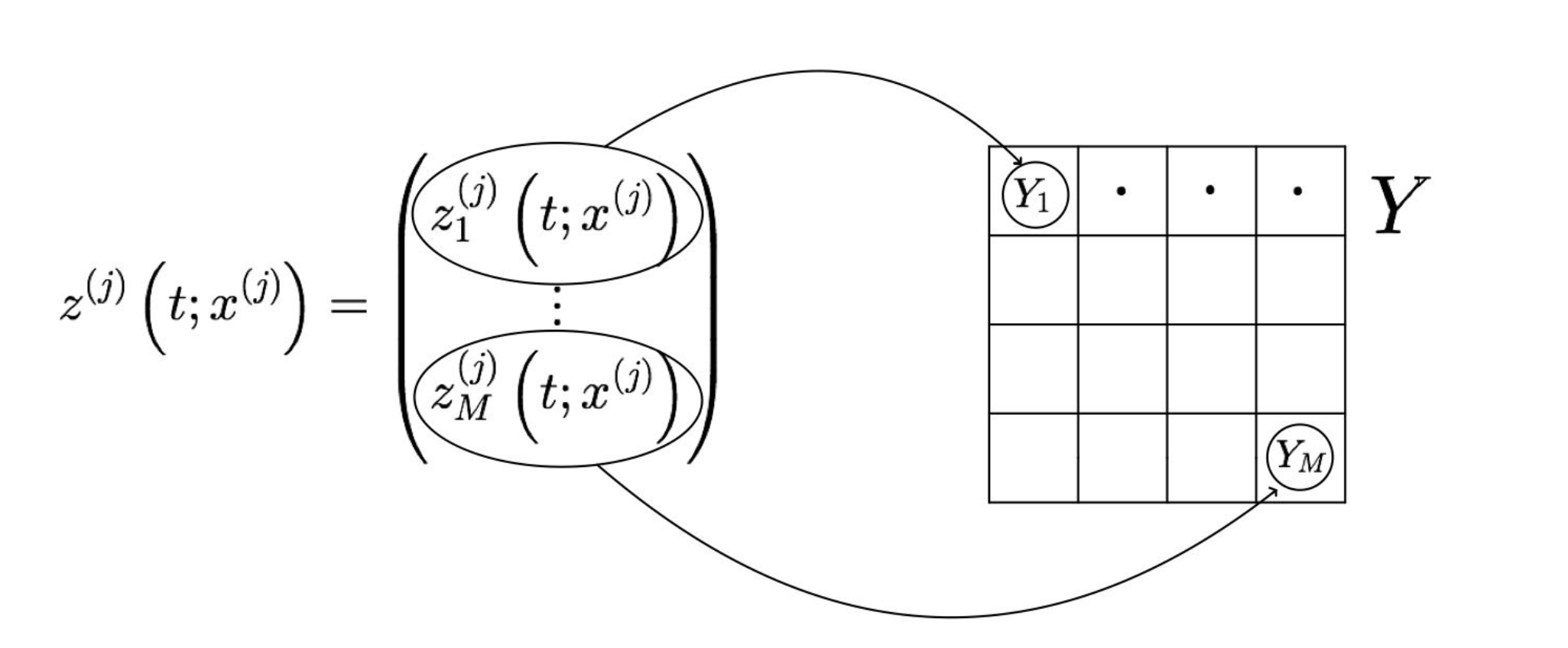}
    \caption{Labeling of network status $z^{j}(t;x^{(j)})$}
    \label{fig:label}
\end{figure}

We now return to the general model and denote the components $a = a(t) \in \mathbb{R}^M$ and $B = B(t) \in \mathbb{R}^{M \times M}$ by:
\begin{align*}
    a(t) = \left( a_1(t), \dots, a_M(t) \right)^{\mathrm{tr}}, \quad B(t) = \left( B_{kl}(t) \right)_{k, l = 1, \dots, M}.
\end{align*}
We define the network bias $a : Y \times [0, T] \rightarrow \mathbb{R}$ and the weight function $b : Y \times Y \times [0, T] \rightarrow \mathbb{R}$ almost everywhere as:
\begin{align*}
    a(y,t) &:= a_k(t) \qquad \quad \text{if} \; y \in Y_k, \\
    b(y, u, t) &:= \frac{1}{|Y_l|} B_{kl}(t) \quad \text{if} \; y \in Y_k, \, u \in Y_l.
\end{align*}
We also define
\begin{align*}
    f(y,t) := z_k(t) \quad \text{if} \; y \in Y_k, \\
    f(y,t=0) =: f_I(y) := x_k \quad \text{if} \; y \in Y_k.
\end{align*}
Note that in this way we have relabeled and 'dimensionalized' the neurons of the network by the variable $y \in Y \subseteq \R^d$.

It is possible to show (see \cite[Theorem 2.2]{liu2020selection}) that the infinite width limit $M \rightarrow \infty$ -corresponding to $\text{diam} \, Y_k \rightarrow 0$ - transforms the ODE system to an integro-differential equation (IDE) for a function $f : Y \times [0, T] \rightarrow \mathbb{R}$, which describes the residual neural network at time $t \in [0, T]$ with neuron identifier $y \in Y$. 
The resulting $N$ integro-differential equations for the training data are given by
\begin{align}
    \label{eq:forward}
    \begin{cases}
        \partial_t f^{(j)} (y, t) = \sigma \left( a(y, t) - \left( B f^{(j)} \right)(y,t) \right) \quad &y \in Y, t \in [0, T]\\
        f^{(j)}(y, t=0) = f_I^{(j)}(y) \quad &y \in Y
    \end{cases}
\end{align}
for all $j = 1, \dots, N$, where $B f^{(j)} (y, t) = \int_{z \in Y} b(y, z, t) f^{(j)}(z, t) dz$ and $\left(f^{(j)}_I\right)_{j=1}^N$ are the (transformed) labeled initial data.

The IDE system described in \eqref{eq:forward} is called the forward problem. It involves modeling and simulating the propagation of data. The forward problem consists of predicting observations given the initial conditions and known model parameters. Once the forward problem is solved, we compute the network output functions $Z^{(j)} : U \rightarrow \R$ as
\begin{align*}
    Z^{(j)} (u) := \int_{Y} w(u,y) f^{(j)}(y, T) dy + \mu (u), \quad \text{for} \; j= 1, \dots, N,
\end{align*}
where the output layer neuron identifier label set $U \subseteq \R^l$ with $l$ possibly different from $d$, $w : U \times Y \rightarrow \R$ and $\mu : U \rightarrow \R$ are terminal weight and bias functions to be also determined during the inverse process. $w$ and $\mu$ are called classifiers.

To finalize the learning problem, we define the predicted outcomes. In many applications the $j$-th predicted outcome only depends locally on the $j$-th network output function:
\begin{align}
    \label{eq:predicted_outcome}
    P^{(j)}_{\text{pre}}(u) = h \left( Z^{(j)} (u)\right),
\end{align}
where $h : \R \rightarrow \R$  is a given prediction function. This step is also known as the regression or classification problem, where the goal is to predict either a function or its class label probabilities. In the former case we choose $h(\xi)=\xi$ on $\R$ and in the latter case a function whose range is the interval $[0, 1]$ is chosen. A common choice for $h$ is the logistic regression function \cite{berner2021modern}
\begin{align*}
    h(\xi) = \frac{e^\xi}{1+e^\xi}
\end{align*}
which converts the output of the network into probabilities of events, associated with labels $u \in U$. Note that the choice of the logistic (i.e., sigmoid) function corresponds to the task of predicting multiple labels for non-exclusive classes so that individual probabilities do not have to sum up to one.

For predicting a single label from multiple classes the soft-max activation \cite[Chapter 6.2.2.3]{goodfellow2016} is often used in the output layer
\begin{align}
\label{eq:soft_max}
    P^{(j)}_{\mathrm{pre}} (u) = \frac{\mathrm{exp} \left( -Z^{(j)}(u) \right)}{ \int_U \mathrm{exp} \left( -Z^{(j)}(v) \right) du(v) },
\end{align}
where $du$ is a bounded Borel measure on $U$, e.g. the atomic measure $du(v) = \sum_{k=1}^L \delta (u_k - v)$. Here $u_1, \dots, u_k \in U$ are finitely many given class labels.

It is important to underline the role of all four training parameters $a, b, w, \mu$. The goal of the learning problem is to estimate these training parameters from observed given label functions $P^{(j)} : U \rightarrow \R$, so that the DNN accurately approximates the data-label relationship for the learning data $\big{\{} f^{(j)}_I, P^{(j)}(u) \big{\}}_{j= 1, \dots, N}$ and generalizes to new unlabeled data.

With this in mind, the learning problem can be recast as an optimization problem:
\begin{align}
    \label{eq:learning_problem}
    \begin{cases}
        \min J(a,b,w,\mu) \\
        \partial_t f^{(j)} (y, t) = \sigma \left( a(y, t) - \left( B f^{(j)} \right)(y,t) \right) \quad &y \in Y, t \in (0, T]\\
        f^{(j)}(y, t=0) = f^{(j)}_I \quad &y \in Y \\
        Z^{(j)} (u) := \int_{Y} w(u,y) f^{(j)}(y, T) dy + \mu (u) \quad &u \in U
        \\
        P^{(j)}_{\text{pre}}(u) \; \mathrm{is} \; \mathrm{given} \; \mathrm{by} \; \eqref{eq:predicted_outcome} \; \mathrm{or} \; \eqref{eq:soft_max} \quad &u \in U,
    \end{cases}
\end{align}
for all $j = 1, \dots, N$, where $J$ is a loss functional measuring the difference between the given (observed) label functions $P^{(j)}$ and those computed by the forward problem, $P^{(j)}_{\text{pre}}$. The aim is to find the "best" parameters $(a,b,w,\mu)$ that minimize the loss functional. This is a data-fitting approach, similar to many other inverse problems formulated as PDE-constrained optimization. Once we have established the setup for the optimization problem \eqref{eq:learning_problem}, we can leverage the powerful techniques from variational calculus and control theory \cite{evans2005introduction} to study its behavior.

Different loss functionals $J$ can be chosen depending on the problem \cite{jadon2024comprehensive, janocha2017on}. In this paper, we will concentrate on the Mean Square Error (MSE) or $L^2$-loss
\begin{align}
    \label{eq:loss}
    J (a, b, w, \mu) := \frac{1}{2N} \sum_{j=1}^N \int_{U} \big{|} P^{(j)}_{\text{pre}} -  P^{(j)} \big{|}^2 du.
\end{align}
This loss functional quantifies the squared difference between the predictions and the target values, assigning a penalty to large deviations from the target value. We reiterate that in many practical classification problems - also in multi-label classification - the measure $du$ is atomic. We note that another widely used loss functional for classification problems is the cross-entropy or log-loss function \cite{masnadi2008design}, which for the single label/multiple class task reads
\begin{align}
    \label{eq:loss_log}
    J(a, b, w, \mu) = - \frac{1}{N} \sum_{j=1}^N \int_{U} \ln \left( \frac{\mathrm{exp} \left( -Z^{(j)}(z) \right)}{ \int_U \mathrm{exp} \left( -Z^{(j)}(v) \right) du(v) } \right) P^{(j)}(z) du(z),
\end{align}
where $P^{(j)} = P^{(j)}(z)$ is the given probability density with respect to the reference measure $du(z)$ on $U$ associated to the $j$-th learning datum $f^{(j)}_I = f^{(j)}_I(y)$.

Note that the cross-entropy 
\begin{align*}
    H(P; Q) := - \int_U P \ln Q d \mu 
\end{align*}
of two probability densities $P, Q$ relative to a reference measure $\mu$ on $U$ assumes its minimum with respect to $Q$ at $P = Q$ such that:
\begin{align*}
    H(P; Q) \ge H(P; P) \quad \forall Q \ge 0 \; \mathrm{with} \; \int_U Q d \mu = 1.
\end{align*}
This follows from the non-negativity of the relative Boltzmann entropy 
\begin{align*}
    E(P; Q) = H(P; Q) - H (P; P) \ge 0
\end{align*}
which is a trivial consequence of Jensen's inequality. Then the loss functional $J$ in \eqref{eq:loss_log} assumes its absolute minimum when 
\begin{align*}
    P^{(j)}_{\mathrm{pre}} (z) = P^{(j)} (z) \quad du(z) \; \mathrm{a.e.}
\end{align*}
Here $P^{(j)}_{\mathrm{pre}}$ is defined in \eqref{eq:soft_max} and the minimal value of $J$ is 
\begin{align*}
    J_{\mathrm{min}} = - \frac{1}{N} \sum_{j=1}^N \int_U \ln (P^{(j)}(z)) P^{(j)}(z) du(z) \ge 0.
\end{align*}

We remark that in the framework of control theory, \eqref{eq:learning_problem} is a fixed time free endpoint problem without running loss, which is commonly referred to as a Mayer problem \cite{liberzon2011calculus}.

For a discussion of appropriate choices for loss functions in classification and regression ML we refer to \cite{janocha2017on} and \cite{jadon2024comprehensive}.

For the sake of a unified presentation, we shall in this paper concentrate on the multiple label/multiple class problem \eqref{eq:learning_problem}, \eqref{eq:predicted_outcome}, \eqref{eq:loss} when considering classification. Also, for the same reason, we shall assume that $du$ is the $l$-dimensional Lebesgue measure on $U$. Generalizations of the theory presented below to other measures on $U$ are straightforward, mostly all it needs is a change of notation.

\section{Well posedness of forward propagation, back propagation and existence of critical points} \label{section:forward_back}
For the coherence of the presentation we begin this section by stating an existence and uniqueness theorem for the forward propagation \eqref{eq:forward}, simplifying the presentation in \cite{liu2020selection}. 

\begin{theorem} \label{theorem:well-posedness_forward}
    Let $f_{I} \in L^2(Y)$, $\sigma \in C^{0,1} (\R)$, $0 < T < \infty$, $|\sigma(0)| |Y| < \infty$, $a \in L^1\left( (0, T); L^2(Y) \right)$ and $b \in L^1\left( (0, T); L^2(Y \times Y) \right)$. Then, the initial-value problem (IVP)
    \begin{align}
        \label{eq:forward_wellpos}
        \begin{cases}
            \partial_t f(y,t) = \sigma \left( a(y,t) - \int_{Y} b(y, z, t) f(z, t) dz \right) \quad &y \in Y, t \in [0, T] \\
            f(y, t=0) = f_I (y) \quad &y \in Y
        \end{cases}
    \end{align}
    has a unique solution in $C \left( [0, T]; L^2(Y) \right)$ which depends uniformly Lipschitz-continuously on the initial data $f_I$ and locally Lipschitz-continuously on the training parameters $a, b$.
    \begin{proof}
        We shall employ the Banach fixed point theorem. Set $X := C \left( [0, T]; L^2(Y) \right)$ and $X_R := \{ f \in X : \| f \|_{X} \le R \}$ for some $0<R< \infty$. We define the operator $ Q_T : X_R \rightarrow X_R$ as
        \begin{align*}
            \left( Q_T f \right) (y, t) := f_I (y) + \int_0^t \sigma \left( a(y, s) - \int_Y b(y, z, s) f(z, s) dz \right) ds
        \end{align*}
         whose fixed points are solutions of \eqref{eq:forward_wellpos}.  Our goal is to demonstrate that $Q_T$ is a contraction on $X_R$ and that $\text{Im}(Q_T) \subset X_R$.

         At first we recall the following standard result from functional analysis \cite{brezis2011functional}. Let $k = k(u,y) \in L^2(U \times Y)$, then the integral operator $\left(K \varphi \right) (u) := \int_Y k(u,y) \varphi(y) dy$ is compact as a map from $L^2(Y)$ into $L^2(U)$ and its $L^2$ operator norm is bounded by the norm of the kernel, i.e., $\| K \| \le \| k \|_{L^2(U \times Y)}$.
         
         Since $\sigma$ is non-decreasing and Lipschitz continuous, we have $0 \le \sigma' \le L$ for some $L >0$ on $\R$. We can then estimate
         \begin{align*}
             \| \left( Q_T f \right) (\cdot, t) \|_{L^2(Y)} \le \| f_I \|_{L^2(Y)} + | \sigma(0) | |Y|^{\frac{1}{2}} t + L \int_0^t \left( \| a(\cdot, s) \|_{L^2(Y)} + \| b (\cdot, \cdot, s) \|_{L^2(Y\times Y)} \| f (\cdot, s) \|_{L^2(Y)} \right) ds.
         \end{align*}
        Choosing $R$ such that $\| f_I \|_{L^2(Y)} + | \sigma(0) | |Y|^{\frac{1}{2}} T + L < \frac{R}{2}$ and $T$ sufficiently small such that $\| a \|_{L^1\left( (0, T); L^2(Y) \right)} \le 1$, $\| b \|_{L^1\left( (0, T); L^2(Y \times Y) \right)} \le \frac{1}{2L}$, we obtain
        \begin{align*}
           \| Q_T f \|_{X_R} \le \frac{R}{2} + \frac{1}{2} \| f \|_{X_R} \le R,
        \end{align*}
        which proves that $\text{Im}(Q_T) \subset X_R$.
        The above construction of $R$ and $T$ leads to
        \begin{align*}
            \| Q_T f_1 - Q_T f_2 \|_{X_R} \le L \| b \|_{L^1\left( (0, T); L^2(Y \times Y) \right)} \| f_1 - f_2 \|_{X_R} \le \frac{1}{2} \| f_1 - f_2 \|_{X_R}.
        \end{align*}
        Thus, $Q_T$ is a contraction in $X_R$ for $T$ sufficiently small. 
        Finally, integrating in time the equation for $f$ and taking its $L^2$ norm, we get
        \begin{align*}
            \| f(t) \|_{L^2(Y)} \le \| f_I \|_{L^2(Y)} + | \sigma(0) | |Y|^{\frac{1}{2}} T + L \| a \|_{L^1 ( (0,t); L^2(Y))} + L \int_{0}^t \| b(s) \|_{L^2(Y \times Y)} \| f(s) \|_{L^2(Y)} ds.
        \end{align*} 
        Consequently, Gronwall's inequality shows that for every $T > 0$ there exists $C = C(T)$ such that $\| f(t) \|_{L^2(Y)} \le C(T)$ for every $t \in [0, T]$. Thus, we proved global existence as $T$ can be extended to $\infty$ (see \cite[Theorem 1.4 pag. 185]{pazy2012semigroups}).

        Similarly, one can show the Lipschitz continuous dependence of the solution $Q_t f$ on the training parameters $a$ and $b$.
    \end{proof}
\end{theorem}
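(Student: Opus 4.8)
The plan is to recast the initial-value problem \eqref{eq:forward_wellpos} as a fixed-point equation and apply the Banach contraction principle in the space $X := C([0,T]; L^2(Y))$. Integrating the evolution equation in time, a function $f \in X$ solves the problem if and only if it is a fixed point of the Picard-type operator
\begin{align*}
(Q_T f)(y,t) := f_I(y) + \int_0^t \sigma\left( a(y,s) - \int_Y b(y,z,s) f(z,s)\, dz \right) ds.
\end{align*}
First I would fix a closed ball $X_R := \{ f \in X : \|f\|_X \le R \}$ and verify that, for a suitable choice of $R$ and a sufficiently small $T$, the map $Q_T$ sends $X_R$ into itself and is a strict contraction there. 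The continuity in time required for membership in $X$ (as opposed to mere boundedness) follows from the fact that the integrand is an $L^1$-in-time, $L^2(Y)$-valued map, whose time integral is absolutely continuous.

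The two estimates that drive the whole argument rely on (i) the elementary consequences of the Lipschitz hypothesis, namely $|\sigma(\xi)| \le |\sigma(0)| + L|\xi|$ and $|\sigma(\xi_1) - \sigma(\xi_2)| \le L|\xi_1 - \xi_2|$, and (ii) the standard Hilbert--Schmidt bound $\|B(\cdot,s)\|_{L^2(Y) \to L^2(Y)} \le \|b(\cdot,\cdot,s)\|_{L^2(Y \times Y)}$ for the integral operator with kernel $b$. For the self-map property I would bound $\|(Q_T f)(\cdot,t)\|_{L^2(Y)}$ pointwise in $t$ and take the supremum, arriving at an estimate of the form $\|f_I\|_{L^2(Y)} + |\sigma(0)||Y|^{1/2} T + L\|a\|_{L^1((0,T);L^2(Y))} + L\|b\|_{L^1((0,T);L^2(Y\times Y))}\|f\|_X$; here the hypothesis $|\sigma(0)||Y| < \infty$ is exactly what renders the constant term finite, since it arises from $\|1\|_{L^2(Y)} = |Y|^{1/2}$. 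Differencing $Q_T f_1$ and $Q_T f_2$ and again invoking the Lipschitz bound for $\sigma$ together with the operator-norm bound yields a contraction constant $L\|b\|_{L^1((0,T);L^2(Y\times Y))}$. Since $a$ and $b$ are only $L^1$ in time, the crucial point is that by absolute continuity of the integral both $\|a\|_{L^1((0,T);L^2)}$ and $\|b\|_{L^1((0,T);L^2)}$ tend to zero as $T \to 0$; hence I can make the contraction constant strictly below $1$ while simultaneously enlarging $R$ to close the self-map inequality.

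Banach's theorem then gives a unique fixed point, i.e.\ a unique local solution on $[0,T_0]$ for some small $T_0$. To reach an arbitrary finite $T$ I would derive an a priori bound: integrating the equation, taking $L^2(Y)$ norms, and applying the integral form of Gronwall's inequality (which accommodates the $L^1$-in-time coefficient $\|b(\cdot,\cdot,s)\|_{L^2}$) produces $\|f(t)\|_{L^2(Y)} \le C(T)$ on the whole interval, ruling out finite-time blow-up and permitting continuation of the local solution up to any $T$. The continuous-dependence claims then come from the same Lipschitz machinery: differencing two solutions with data $f_I^1, f_I^2$ and applying Gronwall gives a uniform (hence global) Lipschitz estimate in $f_I$; differencing solutions for parameters $(a_1,b_1)$ and $(a_2,b_2)$ introduces forcing terms controlled by $\|a_1-a_2\|$ and $\|b_1-b_2\|$, but the $b$-difference term is multiplied by the size of the solution itself, so the resulting Lipschitz constant grows with the magnitude of the parameters --- which is precisely why the dependence on $a,b$ is only \emph{locally} Lipschitz.

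The main obstacle I anticipate is the bookkeeping forced by the $L^1$-in-time (rather than bounded-in-time) regularity of the coefficients: one cannot pull $\|a(\cdot,s)\|_{L^2}$ or $\|b(\cdot,\cdot,s)\|_{L^2}$ out of the time integral as a constant, so every estimate must be carried out under the integral sign, and the smallness of $T$ needed for the contraction must be extracted from absolute continuity of $s \mapsto \|b(\cdot,\cdot,s)\|_{L^2}$ rather than from a crude bound of the type $T \cdot \sup_s \|b(\cdot,\cdot,s)\|_{L^2}$. Once this is handled with care, the remaining pieces --- the operator-norm estimate for $B$, the Lipschitz estimates for $\sigma$, and the Gronwall continuation --- are routine.
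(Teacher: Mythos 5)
Your proposal is correct and follows essentially the same route as the paper: a Banach fixed-point argument for the Picard operator $Q_T$ on a ball in $C([0,T];L^2(Y))$, using the Hilbert--Schmidt bound $\|B(\cdot,s)\|\le\|b(\cdot,\cdot,s)\|_{L^2(Y\times Y)}$ and the Lipschitz property of $\sigma$, with smallness of $T$ extracted from the absolute continuity of the $L^1$-in-time norms, followed by a Gronwall a priori bound for continuation to arbitrary $T$ and the same difference estimates for the (uniform in $f_I$, local in $a,b$) Lipschitz dependence. Your explicit use of $|\sigma(\xi)|\le|\sigma(0)|+L|\xi|$ is a slightly cleaner variant of the paper's appeal to $0\le\sigma'\le L$, but the argument is otherwise identical.
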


For future reference we state explicitly the estimate for the solution $f$ of \eqref{eq:forward_wellpos}
\begin{align}
    \label{eq:estimate_for_f}
    \| f(\cdot, t) \|_{L^2(Y)} \le \left( \| f_{I} \|_{L^2(Y)} + | \sigma (0) | |Y|^{\frac{1}{2}} t + L \| a \|_{L^1\left( (0,t); L^2(Y) \right)} \right) \mathrm{exp} \left( L \| b \|_{L^1\left( (0,t); L^2(Y \times Y) \right)} \right)
\end{align}
and for the difference of two solutions $f_1, f_2$ with corresponding initial data/training parameters $f_{I, 1}, a_1, b_1$ and $f_{I, 2}, a_2, b_2$ respectively:
\begin{align}
    \label{eq:estimate_1}
    \| f_1 (\cdot, t) - f_2 (\cdot, t) \|_{L^2(Y)} \le &\left( \| f_{I,1} - f_{I, 2} \|_{L^2(Y)} + L \| a_1 - a_2 \|_{L^1 \left( (0,t); L^2(Y) \right)} \right) \exp \left( L \| b_2 \|_{L^1 \left( (0,t); L^2(Y \times Y) \right)} \right) \notag \\
    &+ \left( \| f_{I, 1} \|_{L^2(Y)} + | \sigma (0) | |Y|^{\frac{1}{2}} t + L \| a_1 \|_{L^1\left( (0,t); L^2(Y) \right)} \right) \notag \\
    &\times \| b_1 - b_2 \|_{L^1\left( (0,t); L^2(Y \times Y) \right)} \mathrm{exp} \left( L \| b_1 \|_{L^1\left( (0,t); L^2(Y \times Y) \right)} + \| b_2 \|_{L^1\left( (0,t); L^2(Y \times Y) \right)} \right).
\end{align}

We also state the uniform time continuity estimate for $0 \le t_1 \le t_2 \le T$
\begin{align}
    \label{eq:uniform_time_estimate}
    \| f(\cdot, t_1) - f(\cdot, t_2) \|_{L^2(Y)} \le &| \sigma(0) | |t_1 - t_2| \notag \\
    &+ L \left( \| a \|_{L^1( (t_1, t_2); L^2(Y) )} + \| f \|_{C([t_1, t_2]; L^2(Y))} \| b \|_{L^1 \left( (t_1, t_2); L^2(Y) \times L^2(Y) \right)} \right).
\end{align}

For the remainder of this paper we impose the following assumptions (unless explicitly stated otherwise):
\begin{enumerate}[label=(A\arabic*)]
    \item (label and classification domains) $Y \subseteq \R^d$, $U \subseteq \R^l$ bounded and open , $Y$ and $U$ are equipped with their Lebesgue measures
    \item (activation and classification functions) $\sigma, h : \R \rightarrow \R$ are uniformly Lipschitz-continuous on $\R$
    \item (training data) $\left( f^{(j)}_{I}, P^{(j)} \right) \in L^2(Y) \times L^2(U)$ for $j = 1, \dots, N$ and $f^{(j)}_I \neq f^{(i)}_I$ for $j \neq i$
    \item predicted outcomes $P^{(j)}_{\mathrm{pre}}$ are given by \eqref{eq:predicted_outcome} and the loss functional $J$ by \eqref{eq:loss}.
\end{enumerate}

The existence of minimizers of the task \eqref{eq:learning_problem} depends critically on the choice of the set of controls over which the optimization is performed. The goal is clearly to make that set as large as possible in order to obtain a minimum as small as possible. 
Ideally $\mathcal{S} = \{ (a, b, w, \mu) \in L^1 ((0, T); L^2(Y)) \times L^1 ((0, T); L^2(Y \times Y)) \times L^2 ( U \times Y) \times L^2(U) \}$ is the correct choice. 
However, this would lead to insurmountable mathematical difficulties for proving the existence of a minimizer due to the nonlinearity of $\sigma$ in the forward propagation as well as generic lack of convexity of $J$ in terms of the controls.

Before we shall study potential critical points of $J$ over the space $\mathcal{S}$, we give an existence proof for a minimizer over a rather restricted set for the argmin, based on standard variational techniques.

\begin{theorem} \label{thm:existence_minimizers}
    A minimizer $(a, b, w, \mu)$ of the functional $J$ exists when the minimization is performed over a set $\mathcal{S}_0$ which is compact in the $L^1 ((0, T); L^2(Y)) \times L^1 ((0, T); L^2(Y \times Y)) \times L^2 ( U; L^2(Y) \; \mathrm{weak}) \times L^2(U)$ topology.
    \begin{proof}
        Clearly $0 \le \inf_{(a, b, w, \mu) \in \mathcal{S}_0} J(a, b, w, \mu) < \infty$. Denote its infimum by $l_0$. Then, there exists a minimizing sequence $(a_n, b_n, w_n, \mu_n) \in \mathcal{S}_0$ such that 
        \begin{align*}
            \lim_{n \rightarrow \infty} J(a_n, b_n, w_n, \mu_n) = l_0.
        \end{align*}
        By the compactness of $\mathcal{S}_0$ there exists a subsequence $(a_{n_k}, b_{n_k}, w_{n_k}, \mu_{n_k})$ and $(a_0, b_0, w_0, \mu_0) \in \mathcal{S}_0$ such that
        \begin{align*}
            (a_{n_k}, b_{n_k}, w_{n_k}, \mu_{n_k}) \xrightarrow{n_k \rightarrow \infty} (a_0, b_0, w_0, \mu_0)
        \end{align*}
        in $L^1 ((0, T); L^2(Y)) \times L^1 ((0, T); L^2(Y \times Y)) \times L^2 ( U; L^2(Y) \; \mathrm{weak}) \times L^2(U)$. Then, estimate \eqref{eq:estimate_1} implies that $f^{(j)}_{n_k} \rightarrow f^{(j)}_0$ in $C \left( [0, T]; L^2(Y) \right)$, where $f^{(j)}_{n_k}$ are the forward evolutions associated with $(a_{n_k}, b_{n_k})$ and $f^{(j)}_0$ those associated with $(a_0, b_0)$. Finally 
        \begin{align*}
            \lim_{n_k \rightarrow \infty} J( a_{n_k}, b_{n_k}, w_{n_k}, \mu_{n_k} ) = J (a_0, b_0, w_0, \mu_0 ) = l_0
        \end{align*}
        follows easily using the uniform Lipschitz continuity of the function $h$ on $\R$.
    \end{proof}
\end{theorem}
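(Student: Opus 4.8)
The plan is to run the direct method of the calculus of variations: the skeleton is standard, but the analytic core is the passage to the limit in the loss functional along a minimizing sequence. First I would record that $J\ge 0$ by \eqref{eq:loss} and that $J$ is finite on $\mathcal{S}_0$, since every element of $\mathcal{S}_0$ satisfies the hypotheses of Theorem \ref{theorem:well-posedness_forward}, so the forward solutions $f^{(j)}$ exist in $C([0,T];L^2(Y))$ and the outputs $Z^{(j)}\in L^2(U)$ are well defined. Hence $l_0:=\inf_{\mathcal{S}_0}J\in[0,\infty)$. Choosing a minimizing sequence $(a_n,b_n,w_n,\mu_n)$ and invoking the assumed compactness of $\mathcal{S}_0$, I extract a subsequence converging to some $(a_0,b_0,w_0,\mu_0)\in\mathcal{S}_0$ in the product topology. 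The remaining task is to show $J(a_0,b_0,w_0,\mu_0)\le\liminf_k J(a_{n_k},b_{n_k},w_{n_k},\mu_{n_k})=l_0$, which combined with $J(a_0,\ldots)\ge l_0$ forces equality and exhibits a minimizer.

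The second step is stability of the forward map. Because $a_{n_k}\to a_0$ and $b_{n_k}\to b_0$ strongly in $L^1((0,T);L^2(Y))$ and $L^1((0,T);L^2(Y\times Y))$, estimate \eqref{eq:estimate_1} (with the common initial datum $f^{(j)}_I$) yields $f^{(j)}_{n_k}\to f^{(j)}_0$ strongly in $C([0,T];L^2(Y))$, so the terminal states converge strongly in $L^2(Y)$. I would then decompose the output increment as
\begin{align*}
Z^{(j)}_{n_k}(u)-Z^{(j)}_0(u) &= \int_Y w_{n_k}(u,y)\bigl(f^{(j)}_{n_k}(y,T)-f^{(j)}_0(y,T)\bigr)\,dy \\
&\quad + \int_Y \bigl(w_{n_k}-w_0\bigr)(u,y)\,f^{(j)}_0(y,T)\,dy + \bigl(\mu_{n_k}-\mu_0\bigr)(u).
\end{align*}
The first term has $L^2(U)$-norm bounded by $\|w_{n_k}\|_{L^2(U\times Y)}\,\|f^{(j)}_{n_k}(\cdot,T)-f^{(j)}_0(\cdot,T)\|_{L^2(Y)}$ and therefore vanishes strongly, since $w$-norms stay bounded along a weakly convergent sequence; the $\mu$-term vanishes strongly by assumption.

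The crux, and the step I expect to be the genuine obstacle, is the middle term together with the nonlinearity $h$. Testing against an arbitrary $\psi\in L^2(U)$ with the kernel $f^{(j)}_0(y,T)\psi(u)\in L^2(U\times Y)$ shows this term tends to $0$ only \emph{weakly} in $L^2(U)$; it does not converge strongly, because $w\mapsto\int_Y w(\cdot,y)f^{(j)}_0(y,T)\,dy$ is bounded but not compact on $L^2(U\times Y)$. Thus one obtains only $Z^{(j)}_{n_k}\rightharpoonup Z^{(j)}_0$ in $L^2(U)$, and composing with the Lipschitz map $h$ need not preserve a weak limit, so full continuity of $J$ is not automatic under the stated weak topology on $w$. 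The honest way through is to target lower semicontinuity, which is all the direct method requires. When $h$ is affine (the regression choice $h(\xi)=\xi$), $P^{(j)}_{\mathrm{pre}}=Z^{(j)}$ is linear in the controls, so $J$ is convex and strongly continuous, hence weakly lower semicontinuous, giving $J(a_0,\ldots)\le\liminf_k J(a_{n_k},\ldots)$ at once. For a general nonlinear Lipschitz $h$ one cannot use weak convergence of $w$ alone: either the $w$-component of $\mathcal{S}_0$ must be compact in the \emph{strong} $L^2(U;L^2(Y))$ topology, in which case $Z^{(j)}_{n_k}\to Z^{(j)}_0$ strongly and continuity of $J$ follows immediately from $\|h(Z^{(j)}_{n_k})-h(Z^{(j)}_0)\|_{L^2(U)}\le L_h\,\|Z^{(j)}_{n_k}-Z^{(j)}_0\|_{L^2(U)}$, or one must impose a convexity/structural condition on $h$ to recover weak lower semicontinuity. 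Identifying precisely which hypothesis on $\mathcal{S}_0$ and $h$ underlies the ``follows easily'' step is the only delicate point in the argument.
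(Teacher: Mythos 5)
Your skeleton is exactly the paper's: the direct method, extraction of a convergent subsequence from the compact set $\mathcal{S}_0$, stability of the forward map via \eqref{eq:estimate_1}, and your three-term decomposition of $Z^{(j)}_{n_k}-Z^{(j)}_0$ is precisely the right way to fill in the step the paper dismisses as ``follows easily''. The obstruction you raise for the middle term, however, rests on reading the third factor of the topology as the weak topology of $L^2(U\times Y)$. The notation $L^2(U; L^2(Y)\ \mathrm{weak})$ is instead the mixed topology in which the target space $L^2(Y)$ carries its weak topology while the $U$-dependence is measured in norm: $w_{n_k}\to w_0$ means that for each \emph{fixed} $\phi\in L^2(Y)$ the functions $u\mapsto\int_Y w_{n_k}(u,y)\phi(y)\,dy$ converge in the $L^2(U)$ norm. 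Taking $\phi=f^{(j)}_0(\cdot,T)$, your middle term then converges strongly (not merely weakly) in $L^2(U)$, so $Z^{(j)}_{n_k}\to Z^{(j)}_0$ strongly in $L^2(U)$, and the uniform Lipschitz continuity of $h$ from assumption (A2) closes the argument with no convexity or structural hypothesis on $h$. This mixed topology is strictly finer than weak-$L^2(U\times Y)$: for instance $w_n(u,y)=e_n(u)\phi_0(y)$ with $(e_n)$ orthonormal in $L^2(U)$ is weakly null but admits no subsequence convergent in the mixed sense. That is exactly why the theorem demands compactness of $\mathcal{S}_0$ in this topology rather than mere boundedness and weak closedness --- the hypothesis is already carrying the load you propose to add by hand. (For your first term, note that compactness gives, via Banach--Steinbaus applied to the pointwise-convergent operators $\phi\mapsto\int_Y w_{n_k}(\cdot,y)\phi(y)\,dy$, a uniform bound on their operator norms, which is all that estimate requires; the Hilbert--Schmidt norm of $w_{n_k}$ is not needed.) Your fallback options --- strong $L^2(U\times Y)$ compactness of the $w$-component, or affine $h$ --- are each sufficient but neither is necessary; the intermediate mixed topology is the reading under which the paper's one-line conclusion is correct as stated.
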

The restriction imposed by the compactness condition is more severe for the dynamic control parameters $a, b$ than for the output layer regression parameters $w, \mu$. In this context it is interesting to see how the results apply to the finite-dimensional forward evolution discussed in Section \ref{section:section_1}, where the integro-differential equations \eqref{eq:forward} are replaced by ODE-systems of the type in \eqref{eq:z_ODE} with associated learning data $z^{(j)}_I \in \R^M$. Then, by the compact embedding of $\mathrm{BV}(0, T)$ into $L^1(0, T)$ we find that control sets bounded in $\mathrm{BV} \left( (0,T); \R^M \right) \times \mathrm{BV} \left( (0,T); \R^{M^2} \right) \times \R^{M_0 \times M} \times \R^{M_0}$ give the existence of a minimizer via Theorem \ref{thm:existence_minimizers}. Here, $M_0$ denotes the dimension of the output of the final layer of the network. Obviously, countably many jump-discontinuities in time with bounded total jump heights are allowed here.

Successively, our main task is to compute the gradients of the functional $J = J(a,b,w,\mu)$ defined in \eqref{eq:loss} with respect to all of its variables, where $J : L^2\left( Y \times (0,T) \right) \times L^2\left( Y \times Y \times (0,T) \right) \times L^2 (U \times Y) \times L^2 (U) \rightarrow \R$, for the sake of characterising potentially occurring critical points of the functional. We start by computing the first variation of $J$ with respect to $\mu$ in direction $\varphi$, which we will denote as ${\left\langle D_{\mu} J, \varphi \right\rangle}_{L^2(U)}$ with the usual $L^2$ inner product. We compute
\begin{align*}
    {\left\langle D_{\mu} J, \varphi \right\rangle}_{L^2(U)} &= \frac{d}{d \varepsilon} J(a, b, w, \mu + \varepsilon \varphi) \bigg{|}_{\varepsilon=0} \\
    &= \frac{1}{N} \sum_{j=1}^N \int_U \left( P_{\text{pre}}^{(j)}(u) - P^{(j)}(u) \right) h'\left( Z^{(j)}(u) \right) \varphi(u) du,
\end{align*}
which implies
\begin{align}
    \label{eq:D_mu}
    D_{\mu} J (u) = \frac{1}{N} \sum_{j=1}^N \left( P_{\text{pre}}^{(j)}(u) - P^{(j)}(u) \right) h'\left( Z^{(j)}(u) \right).
\end{align}
A similar computations yield the first variation of $J$ with respect to $w$ in direction $v$, i.e.,
\begin{align*}
    {\left\langle D_{w} J, v \right\rangle}_{L^2(U\times Y)} &= \frac{d}{d \varepsilon} J(a, b, w + \varepsilon v, \mu) \bigg{|}_{\varepsilon=0} \\ 
    &= \int_{U} \int_{Y} \left( \frac{1}{N} \sum_{j=1}^N \left( P_{\text{pre}}^{(j)}(u) - P^{(j)}(u) \right) h'\left(Z^{(j)}(u)\right) f^{(j)}(y, T) \right) v(u, y) dy du,
\end{align*}
and
\begin{align}
    \label{eq:D_w}
    D_{w} J (u, y) = \frac{1}{N} \sum_{j=1}^N \left( P_{\text{pre}}^{(j)}(u) - P^{(j)}(u) \right) h'\left(Z^{(j)}(u)\right) f^{(j)}(y, T).
\end{align}
The computation of the first variation with respect to $a$ requires more attention. For this scope, we follow \cite{liu2020selection} and introduce the following notation (to be used in the sequel when useful)
\begin{align*}
    f^{(j)}= f^{(j)}_{a, b}, \quad \xi^{(j)}_{a, b} := a - B_b f^{(j)}_{a, b},
\end{align*}
where $B_b$ is the integral operator with kernel $b = b(y, z, t)$. Then, linearizing $J$ with respect to $a$ in direction $\alpha$, we compute
\begin{align}
    \label{eq:D_a1}
    {\left\langle D_a J, \alpha \right\rangle}_{L^2( (0, T) \times Y)} &= \frac{d}{d \varepsilon} J(a + \alpha, b, w, \mu) \bigg{|}_{\varepsilon=0} \notag \\
    &= \frac{1}{N} \sum_{j=1}^N \int_{Y} \int_U \left( \frac{1}{N} \sum_{j=1}^N \left( P_{\text{pre}}^{(j)}(u) - P^{(j)}(u) \right) h'\left(Z^{(j)}(u)\right) \right) w(u,y) g^{(j)}(y,T) du dy,
\end{align}
where $g^{(j)}$ is the first variation of $f^{(j)}_{a,b}$ with respect to $a$ in direction $\alpha$, i.e.,
\begin{align*}
    g^{(j)} := \lim_{\varepsilon \rightarrow 0} \frac{1}{\varepsilon} \left( f^{(j)}_{a+\varepsilon \alpha, b} - f^{(j)}_{a, b} \right).
\end{align*}
A straightforward computation leads to
\begin{align*}
    \begin{cases}
        \partial_t g^{(j)} = \sigma' \left( \xi^{(j)}_{a, b} \right) \left( \alpha - B_b g^{(j)} \right) \quad &y \in Y, t \in (0, T] \\
        g^{(j)} (y, t=0) = 0 \quad &y \in Y.
    \end{cases}
\end{align*}
Let $M_{a, b}^{(j)}(t, s)$ be the evolution system \cite{pazy2012semigroups} generated by $- \sigma' \left( \xi^{(j)}_{a, b} \right) B_b$, that is $m^{(j)}(t) := M_{a, b}^{(j)}(t, s) m_0$ solves $\partial_t m^{(j)} = - \sigma' \left( \xi^{(j)}_{a, b} \right) B_b m^{(j)}$ for $t \ge s$ and $m^{(j)}(s) = m^{(j)}_0$. Note that $M_{a, b}^{(j)}(t, s)$ is a bounded operator from $L^2(Y)$ into itself, continuous in $t$ and $s$ with respect to the operator norm topology, and it satisfies
\begin{align}
    \label{eq:identity_Mj}
    M^{(j)}_{a, b} (t, s) = I + \int_s^t \Xi^{(j)} (\tau)  M^{(j)}_{a, b} (\tau, s) d\tau,
\end{align}
where $\Xi^{(j)} (t)$ is the integral operator with kernel $- \sigma' \left( \xi^{(j)}_{a, b} (y,t) \right) b(y, z, t)$.

Then
\begin{align*}
    g^{(j)}(y, t) = \int_0^t M^{(j)}_{a, b} (t, s) \left( \sigma' \left( \xi^{(j)}_{a, b} (y, s) \right) \alpha (y, s) \right) ds,
\end{align*}
and $g^{(j)}$ is the Gateaux derivative of $f^{(j)}_{a, b}$ with respect to $a$ in direction $ \alpha$, i.e., $\left( D_a f^{(j)}_{a, b} \right) ( \alpha)$.
To streamline computations we define 
\begin{align*}
    \omega^{(j)}(y) := \int_U \left( P_{\text{pre}}^{(j)}(u) - P^{(j)}(u) \right) h'\left(Z^{(j)}(u)\right) w(u,y) du
\end{align*}
and we substitute the latter expression for $g^{(j)}$ into \eqref{eq:D_a1}, obtaining
\begin{align*}
    {\left\langle D_a J, \alpha \right\rangle}_{L^2( (0, T) \times Y )} &= \frac{1}{N} \sum_{j=1}^N \int_0^T \int_Y \omega^{(j)}(y) M_{a, b}^{(j)}(T, s) \left( \sigma' \left( \xi^{(j)}_{a, b} (y, s) \right) \alpha (y, s) \right) dy ds \\
    &= \frac{1}{N} \sum_{j=1}^N \int_0^T \int_Y M_{a, b}^{(j)}(T, s)^* \left( \omega^{(j)}(y) \right) \sigma' \left( \xi^{(j)}_{a, b} (y, s) \right) \alpha (y, s) dy ds \\
    &=: \frac{1}{N} \sum_{j=1}^N \int_0^T \int_Y r^{(j)}(y, s) \sigma' \left( \xi^{(j)}_{a, b} (y, s) \right) \alpha (y, s) dy ds,
\end{align*}
where $r^{(j)}(y, s) := M_{a, b}^{(j)}(T, s)^* \left( \omega^{(j)}(y) \right)$ solves the following final-value problem
\begin{align}
    \label{eq:backward}
    \begin{cases}
       \partial_t r^{(j)} = \sigma' \left( \xi^{(j)}_{a, b} B_b \right)^* r^{(j)} = B_{b^*} \left( \sigma' \left( \xi^{(j)}_{a, b} \right) r^{(j)} \right) \quad &y \in Y, s \in [0, T) \\
       r^{(j)}(T) = \int_U  \left( P_{\text{pre}}^{(j)}(u) - P^{(j)}(u) \right) h'\left(Z^{(j)}(u)\right) w(u,y) du \quad &y \in Y. 
    \end{cases}
\end{align}
Here, we introduced the notation $*$ to indicate the adjoint of an operator and we used the fact that $B^*_b = B_{b^*}$ with $b^* (y, z, s) = b(z, y, s)$. We conclude
\begin{align}
    \label{eq:D_a}
    D_a J (y, s) = \frac{1}{N} \sum_{j=1}^N \sigma' \left( \xi^{(j)}_{a, b} (y, s) \right) r^{(j)} (y, s).
\end{align}
Finally, the computation of the first variation of $J$ with respect to $b$ follows the exact same structure of the one with respect to $a$. Indeed, we introduce the perturbation $\beta$ in the $b$ direction and define
\begin{align*}
    p^{(j)} (y,t) := \left( D_b f^{(j)}_{a, b} \right) (\beta). 
\end{align*}
The latter satisfies
\begin{align*}
    \begin{cases}
        \partial_t p^{(j)} = - \sigma' \left( \xi^{(j)}_{a, b} \right) \left( B_b p^{(j)} + B_{\beta} f^{(j)}_{a, b} \right) \quad &y \in Y, t \in (0, T] \\
        p^{(j)} (y, t=0) = 0 \quad &y \in Y.
    \end{cases}
\end{align*}
Thus,
\begin{align*}
    p^{(j)}(y, t) = - \int_0^t M^{(j)}_{a, b} (t, s) \left( \sigma' \left( \xi^{(j)}_{a, b}(y, s) \right) \left( B_{\beta} f^{(j)}_{a, b} \right) (y, s) \right) ds.
\end{align*}
Then, we compute
\begin{align*}
    {\left\langle D_b J, \beta \right\rangle}_{L^2( (0, T) \times Y \times Y)} &= \frac{d}{d \varepsilon} J(a, b + \varepsilon \beta, w, \mu) \bigg{|}_{\varepsilon=0} \\
    &= \frac{1}{N} \sum_{j=1}^N \int_{Y} \int_U \left( \frac{1}{N} \sum_{j=1}^N \left( P_{\text{pre}}^{(j)}(u) - P^{(j)}(u) \right) h'\left(Z^{(j)}(u)\right) \right) w(u,y) p^{(j)}(y,T) du dy \\
    &= - \frac{1}{N} \sum_{j =1}^N \int_0^T \int_Y \omega^{(j)}(y) M^{(j)}_{a, b} (T, s) \left( \sigma' \left( \xi^{(j)}_{a, b}(y, s) \right) \left( B_{\beta} f^{(j)}_{a, b} \right) (y, s) \right) dy ds \\
    &= - \frac{1}{N} \sum_{j =1}^N \int_0^T \int_Y r^{(j)}(y, s) \sigma' \left( \xi^{(j)}_{a, b}(y, s) \right) \left( B_{\beta} f^{(j)}_{a, b} \right) (y, s) dy ds \\
    &= - \frac{1}{N} \sum_{j =1}^N \int_0^T \int_Y \int_Y r^{(j)}(y, s) \sigma' \left( \xi^{(j)}_{a, b}(y, s) \right) f^{(j)}_{a, b}(z, s) \beta(y, z, s) dz dy ds.\end{align*}
Consequently
\begin{align}
    \label{eq:D_b}
    D_b J (y, z, s) = - \frac{1}{N} \sum_{j =1}^N f^{(j)}_{a, b}(z, s) \sigma' \left( \xi^{(j)}_{a, b}(y, s) \right) r^{(j)}(y, s).
\end{align}

We collect all the results on the first variations of $J$ in the following Proposition.

\begin{proposition} \label{prop:first_variations}
    The first variations of the loss functional $J \equiv J(a, b, w, \mu)$ \eqref{eq:loss} are
    \begin{align*}
        D_a J (y, s) &= \frac{1}{N} \sum_{j=1}^N \sigma' \left( \xi^{(j)} (y, s) \right) r^{(j)} (y, s) \\
        D_b J (y, z, s) &= - \frac{1}{N} \sum_{j =1}^N f^{(j)}(z, s) \sigma' \left( \xi^{(j)}(y, s) \right) r^{(j)}(y, s) \\
        D_{w} J (u, y) &= \frac{1}{N} \sum_{j=1}^N \left( P_{\text{pre}}^{(j)}(u) - P^{(j)}(u) \right) h'\left(Z^{(j)}(u)\right) f^{(j)}(y, T) \\
        D_{\mu} J (u) &= \frac{1}{N} \sum_{j=1}^N \left( P_{\text{pre}}^{(j)}(u) - P^{(j)}(u) \right) h'\left( Z^{(j)}(u) \right).        
    \end{align*}
    Moreover,
    \begin{align*}
        D J := \left( D_a J, D_b J, D_w J, D_{\mu} J \right) \in L^2\left( Y \times (0, T) \right) \times L^2\left( Y \times Y \times (0, T) \right) \times L^2 (U \times Y) \times L^2 (U)
    \end{align*}
    and $D J$ corresponds to the Gateaux derivative of $J$.
\end{proposition}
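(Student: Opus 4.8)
The four explicit formulas are exactly the expressions \eqref{eq:D_a}, \eqref{eq:D_b}, \eqref{eq:D_w}, \eqref{eq:D_mu} derived in the preceding computation, so the substance of the proof lies in two separate claims: first, that each component belongs to the asserted $L^2$ space, and second, that the collection $DJ$ is genuinely the Gateaux derivative of $J$ on the product Hilbert space, i.e.\ that the directional-derivative functional it induces is linear, bounded, and coincides with $\frac{d}{d\eps}J|_{\eps=0}$ along an arbitrary combined perturbation $(\alpha,\beta,v,\varphi)$.

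The plan is first to record the $L^2$ bounds. Under assumptions (A1)--(A4), $h$ is uniformly Lipschitz so $h'\in L^\infty(\R)$, and $\sigma\in C^{0,1}$ gives $0\le\sigma'\le L$. Since $f^{(j)}(\cdot,T)\in L^2(Y)$ by Theorem \ref{theorem:well-posedness_forward} and $w\in L^2(U\times Y)$, the output $Z^{(j)}$ lies in $L^2(U)$; hence $P^{(j)}_{\mathrm{pre}}-P^{(j)}\in L^2(U)$ and $(P^{(j)}_{\mathrm{pre}}-P^{(j)})\,h'(Z^{(j)})\in L^2(U)$. This immediately places $D_\mu J$ in $L^2(U)$ and, tensoring against $f^{(j)}(\cdot,T)\in L^2(Y)$, places $D_w J$ in $L^2(U\times Y)$. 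For the dynamic variables I would invoke well-posedness of the backward problem \eqref{eq:backward}: its terminal datum $\omega^{(j)}(y)=\int_U(P^{(j)}_{\mathrm{pre}}-P^{(j)})h'(Z^{(j)})w(u,y)\,du$ lies in $L^2(Y)$ by the operator-norm bound $\|K\|\le\|k\|_{L^2}$ recalled in the proof of Theorem \ref{theorem:well-posedness_forward}, and since \eqref{eq:backward} is a linear evolution with bounded generator $B_{b^*}\sigma'(\xi^{(j)})$, the same fixed-point/Gronwall argument yields $r^{(j)}\in C([0,T];L^2(Y))$. Boundedness of $\sigma'$ then gives $D_a J\in L^2(Y\times(0,T))$, while for $D_b J$ one observes that for each fixed $s$ the integrand factorizes as the tensor product of the two $L^2(Y)$ functions $f^{(j)}(\cdot,s)$ and $\sigma'(\xi^{(j)}(\cdot,s))r^{(j)}(\cdot,s)$, whose $L^2(Y\times Y)$-norm is uniformly bounded in $s$; integrating over the finite interval $(0,T)$ gives $D_b J\in L^2(Y\times Y\times(0,T))$.

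For the second claim I would argue by the chain rule together with Riesz representation. The functional factors as $J=\frac{1}{2N}\sum_j\|h(Z^{(j)})-P^{(j)}\|_{L^2(U)}^2$, where $Z^{(j)}$ depends affinely on $(w,\mu)$ and, through $f^{(j)}(\cdot,T)$, nonlinearly on $(a,b)$. The maps $w\mapsto Z^{(j)}$ and $\mu\mapsto Z^{(j)}$ are bounded linear, so the $w,\mu$-variations follow directly. The essential point is the Gateaux differentiability of the control-to-state map $(a,b)\mapsto f^{(j)}_{a,b}$: I would show that the difference quotient $\eps^{-1}(f^{(j)}_{a+\eps\alpha,b}-f^{(j)}_{a,b})$ converges in $C([0,T];L^2(Y))$ to the solution $g^{(j)}$ of the linearized equation, using the Lipschitz estimate \eqref{eq:estimate_1} to control remainders and the differentiability of $\sigma$ to pass the limit through the nonlinearity; the Duhamel representation via the evolution system $M^{(j)}_{a,b}$ then identifies $g^{(j)}=(D_a f^{(j)}_{a,b})(\alpha)$, and analogously $p^{(j)}=(D_b f^{(j)}_{a,b})(\beta)$. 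Granting this, each directional derivative $\langle D_\bullet J,\cdot\rangle$ is linear in its direction and bounded, all occurring kernels $\sigma',h',M^{(j)}_{a,b},f^{(j)},r^{(j)}$ being $L^\infty$- or $L^2$-controlled, so by Riesz representation it is realized by the stated $L^2$ element; summing the four contributions shows that $\frac{d}{d\eps}J(a+\eps\alpha,b+\eps\beta,w+\eps v,\mu+\eps\varphi)|_{\eps=0}=\langle DJ,(\alpha,\beta,v,\varphi)\rangle$, i.e.\ $DJ$ is the Gateaux derivative.

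The main obstacle is precisely the Gateaux differentiability of the control-to-state map and the rigorous justification of the linearized systems for $g^{(j)}$ and $p^{(j)}$: one must pass to the limit inside $\sigma$ while only assuming $\sigma\in C^{0,1}$, which requires either upgrading to $\sigma\in C^1$ with bounded derivative or carefully exploiting the uniform boundedness of the difference quotients of $\sigma$ together with dominated convergence in $C([0,T];L^2(Y))$. Once this differentiability is secured, the remaining identifications are routine applications of the chain rule and the Riesz isomorphism.
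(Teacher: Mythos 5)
Your proposal follows essentially the same route as the paper: the four formulas are obtained by linearizing $J$ in each direction, representing the linearized state perturbations $g^{(j)}$ and $p^{(j)}$ via the evolution system $M^{(j)}_{a,b}$, and passing to the adjoint to obtain the backward problem for $r^{(j)}$, which is exactly the computation the paper carries out before stating the proposition. Your additional care in verifying the $L^2$ memberships and in flagging that the Gateaux differentiability of the control-to-state map needs $\sigma\in C^1$ (or a dominated-convergence argument beyond the bare $C^{0,1}$ assumption of (A2)) is a genuine and welcome tightening of a step the paper treats as ``a straightforward computation.''
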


From Proposition \ref{prop:first_variations}, necessary and sufficient conditions for a stationary point
\begin{align*}
    \left( a_{\infty}, b_{\infty}, w_{\infty}, \mu_{\infty} \right) \in L^2 \left( Y \times (0, T) \right) \times L^2 \left( Y \times Y \times (0, T) \right) \times L^2 (U \times Y) \times L^2 (U)
\end{align*}
of the functional $J(a, b, w, \mu)$ are
    \begin{align} \label{eq:stationary_a}
       \sum_{j=1}^N \sigma' \left( \xi^{(j)}_{a_{\infty}, b_{\infty}} (y, s) \right) r^{(j)} (y, s) &= 0 \quad a.e. \; y \in Y, s \in (0, T) \\
       \label{eq:stationary_b}
       \sum_{j =1}^N f^{(j)}_{a_{\infty}, b_{\infty}}(z, s) \sigma' \left( \xi^{(j)}_{a_{\infty}, b_{\infty}}(y, s) \right) r^{(j)}(y, s) &= 0 \quad a.e. \; y, z \in Y, s \in (0, T) \\
       \label{eq:stationary_w}
       \sum_{j=1}^N \left( P_{\text{pre}}^{(j)}(u) - P^{(j)}(u) \right) h'\left(Z^{(j)}(u)\right) f^{(j)}_{a_{\infty}, b_{\infty}}(y, T) &= 0 \quad a.e. \; u \in U, y \in Y \\
        \label{eq:stationary_mu}
        \sum_{j=1}^N \left( P_{\text{pre}}^{(j)}(u) - P^{(j)}(u) \right) h'\left(Z^{(j)}(u)\right) &= 0 \quad a.e. \; u \in U,
    \end{align}
where $f^{(j)}_{a_{\infty}, b_{\infty}}$ solve the forward problems \eqref{eq:forward} and $r^{(j)}$ the backward problem \eqref{eq:backward}.

We introduce the concept of weak linear independence, motivated by the structure of \eqref{eq:stationary_a}, \eqref{eq:stationary_b}:
\begin{definition}
    We say that the functions $\{\varphi_j \}_{j=1, \dots, N}$, $\varphi_j : \Omega \subseteq \R^M \rightarrow \R$, are weakly linear independent if $\sum_{j=1}^N \lambda_j \varphi_j = 0$ on $\Omega$ implies $\lambda_j = 0$ for $j = 1, \dots, N$ whenever $\sum_{j=1}^N \lambda_j = 0$.
\end{definition}

The following characterization of weak linear independence is useful.
\begin{proposition} \label{prop:char_wli}
    The functions $\{\varphi_j \}_{j=1, \dots, N}$, $\varphi_j : \Omega \subseteq \R^M \rightarrow \R$ are weakly linear independent if and only if for $J \in {1, \dots, N}$ the $(N-1)$ functions $\{ \varphi_1 - \varphi_J, \dots, \varphi_{J-1} - \varphi_J, \varphi_{J+1} - \varphi_J, \dots, \varphi_N - \varphi_J \}$ are linearly independent.
\end{proposition}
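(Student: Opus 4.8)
The plan is to establish the equivalence for an arbitrary but fixed index $J$; since weak linear independence is symmetric in the indices $1, \dots, N$, the choice of $J$ is immaterial, so proving the claim for one $J$ simultaneously settles it for all. The engine of the whole argument is the elementary bookkeeping identity: whenever $\sum_{j=1}^N \lambda_j = 0$, one may eliminate $\lambda_J = -\sum_{j \neq J} \lambda_j$ to rewrite the linear combination as $\sum_{j=1}^N \lambda_j \varphi_j = \sum_{j \neq J} \lambda_j (\varphi_j - \varphi_J)$. This exhibits a bijection between coefficient vectors $(\lambda_1, \dots, \lambda_N)$ constrained by $\sum_j \lambda_j = 0$ and unconstrained $(N-1)$-tuples $(\lambda_j)_{j \neq J}$, under which the two notions of vanishing combinations correspond exactly.

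For the forward direction, I would assume $\{\varphi_j\}$ is weakly linear independent and start from a relation $\sum_{j \neq J} \mu_j (\varphi_j - \varphi_J) = 0$. Setting $\lambda_j := \mu_j$ for $j \neq J$ and $\lambda_J := -\sum_{j \neq J} \mu_j$ produces coefficients with $\sum_{j=1}^N \lambda_j = 0$ and, by the identity above, $\sum_{j=1}^N \lambda_j \varphi_j = 0$. Weak linear independence then forces all $\lambda_j = 0$, hence $\mu_j = 0$ for every $j \neq J$, so the $N-1$ difference functions are linearly independent in the ordinary sense.

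For the reverse direction, I would assume the differences $\{\varphi_j - \varphi_J\}_{j \neq J}$ are linearly independent and take any $(\lambda_j)$ with $\sum_{j=1}^N \lambda_j = 0$ and $\sum_{j=1}^N \lambda_j \varphi_j = 0$. Eliminating $\lambda_J$ via the constraint recasts the second relation as $\sum_{j \neq J} \lambda_j (\varphi_j - \varphi_J) = 0$; linear independence of the differences gives $\lambda_j = 0$ for $j \neq J$, and the constraint then yields $\lambda_J = 0$ as well. This is exactly weak linear independence.

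I do not anticipate any genuine obstacle: the result is a direct consequence of the single substitution $\lambda_J = -\sum_{j \neq J} \lambda_j$. The only point deserving care is to track the constraint $\sum_j \lambda_j = 0$ consistently across both implications so that $\lambda_J$ is recovered in the same way, and to note explicitly that the argument is insensitive to which index plays the role of $J$.
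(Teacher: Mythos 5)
Your proof is correct. The paper states Proposition \ref{prop:char_wli} without proof, and your argument --- eliminating $\lambda_J=-\sum_{j\neq J}\lambda_j$ to identify constrained vanishing combinations $\sum_{j}\lambda_j\varphi_j=0$, $\sum_j\lambda_j=0$ with unconstrained relations $\sum_{j\neq J}\lambda_j(\varphi_j-\varphi_J)=0$ --- is precisely the elementary bookkeeping the authors evidently had in mind; your remark that the symmetry in the indices makes the choice of $J$ immaterial also resolves the slight ambiguity in the statement between ``for some $J$'' and ``for all $J$''.
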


Define the map $T_{F, r} : \R \times L^2(Y) \rightarrow \R$ by
\begin{align}
    \label{eq:Tfr}
    T_{F, r}(a, b) := \sum_{j=1}^N \sigma \left( a - \int_Y b(z) f_j (z) dz \right) r_j
\end{align}
with given parameters $r = \left( r_1, \dots, r_N \right)^{\mathrm{tr}} \in \R^N$ and $F = \left( f_1, \dots, f_N \right)^{\mathrm{tr}} \in L^2(Y)^N$. Define $\lambda_j (a, b) := \sigma' \left( a - \int_Y b(z) f_j (z) dz \right)$, $\Lambda (a, b) := \mathrm{diag} \left( \lambda_1, \dots, \lambda_N \right)$ and compute
\begin{align}
    \label{eq:D_a_TFR} D_a T_{F, r} (a, b) &= \sum_{j=1}^N \lambda_j r_j, \\
    \label{eq:D_b_TFR} D_b T_{F, r} (a, b) &= - \sum_{j=1}^N \lambda_j r_j f_j (y).
\end{align}
Also, denote by $G_F$ the Gram matrix of $\{ f_1, \dots, f_N \}$, i.e., $G_F = \left( g_{ij} \right)_{i,j=1,\dots,N}$ where $g_{ij} = \int_Y f_i (z) f_j (z) dz$.
Clearly $(a,b) \in \R \times L^2(Y)$ is a critical point of $T_{F, r}$ if and only if $D_a T_{F, r} (a, b) = 0$ and $D_b T_{F, r} (a, b) = 0$.

\begin{proposition}
    $(a, b)$ is a critical point of $T_{F, r}$ if and only if
    \begin{align}
        \label{eq:stationary_1} G_F \Lambda(a, b) r &= \bold{0} \\
        \label{eq:stationary_2} e^{\mathrm{tr}} \Lambda(a, b) r &= 0,
    \end{align}
    where $e := (1, \dots, 1)^{\mathrm{tr}}$.
\end{proposition}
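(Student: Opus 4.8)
The plan is to read off a critical point directly from its definition, namely $D_a T_{F,r}(a,b)=0$ together with $D_b T_{F,r}(a,b)=0$, and to recast these two conditions as the linear-algebraic identities \eqref{eq:stationary_1}--\eqref{eq:stationary_2}. The unifying observation is that both gradients \eqref{eq:D_a_TFR}--\eqref{eq:D_b_TFR} are assembled from the single vector $c := \Lambda(a,b)\, r = (\lambda_1 r_1, \dots, \lambda_N r_N)^{\mathrm{tr}} \in \R^N$, so the whole argument amounts to translating two statements about $c$ into matrix form. I would first dispose of the $a$-component, which is immediate: by \eqref{eq:D_a_TFR} one has $D_a T_{F,r}(a,b) = \sum_{j=1}^N \lambda_j r_j = \sum_{j=1}^N c_j = e^{\mathrm{tr}} c = e^{\mathrm{tr}} \Lambda(a,b)\, r$, so the vanishing of $D_a T_{F,r}(a,b)$ is verbatim equation \eqref{eq:stationary_2}.

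The substantive part is the $b$-component, which is valued in $L^2(Y)$. By \eqref{eq:D_b_TFR}, the condition $D_b T_{F,r}(a,b)=0$ asserts that the function $y \mapsto \sum_{j=1}^N c_j f_j(y)$ vanishes in $L^2(Y)$, and the task is to show that this functional identity is equivalent to $G_F c = \mathbf{0}$, i.e. equation \eqref{eq:stationary_1}. The forward direction is a one-line pairing: testing $\sum_j c_j f_j = 0$ against each $f_i$ and using $g_{ij} = \int_Y f_i f_j \, dz$ gives $(G_F c)_i = \big\langle f_i, \sum_j c_j f_j \big\rangle_{L^2(Y)} = 0$ for all $i$. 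For the converse I would invoke the standard Gram-matrix identity
\begin{align*}
    \Big\| \sum_{j=1}^N c_j f_j \Big\|_{L^2(Y)}^2 = \sum_{i,j=1}^N c_i c_j \, g_{ij} = c^{\mathrm{tr}} G_F c,
\end{align*}
so that $G_F c = \mathbf{0}$ forces $c^{\mathrm{tr}} G_F c = 0$, whence $\sum_j c_j f_j = 0$ in $L^2(Y)$.

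Combining the two equivalences gives the proposition. I do not expect a genuine obstacle here: the only nontrivial ingredient is the positive-semidefiniteness of the Gram matrix encoded in the displayed identity, which is exactly what makes "$G_F$ annihilates $c$" equivalent to "the linear combination $\sum_j c_j f_j$ vanishes in $L^2(Y)$". I would stress that no hypothesis on the (weak) linear independence of the $f_j$ is used at this stage, so $G_F$ may well be singular; the characterization holds verbatim for the possibly degenerate vector $c = \Lambda(a,b)\, r$, which is precisely what is needed when \eqref{eq:stationary_1}--\eqref{eq:stationary_2} are later analysed in the context of weak linear independence.
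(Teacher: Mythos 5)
Your proof is correct and follows essentially the same route as the paper: both reduce the vanishing of the $L^2(Y)$-valued gradient $D_b T_{F,r}$ to the Gram-matrix condition $G_F\Lambda(a,b)r=\mathbf{0}$, the paper by testing against an arbitrary $f=\alpha^{\mathrm{tr}}F+h$ with $h\in\mathrm{span}\{f_1,\dots,f_N\}^{\perp}$, you by pairing with the $f_i$ and invoking $c^{\mathrm{tr}}G_Fc=\|\sum_j c_jf_j\|_{L^2(Y)}^2$. If anything, your use of the quadratic-form identity makes the converse implication (which the paper leaves implicit) fully explicit, and your remark that no weak linear independence is needed here is accurate.
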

\begin{proof}
    Every $f \in L^2(Y)$ can be represented as $f(y) = \alpha^{\mathrm{tr}} F(y) + h(y) $ where $\alpha := \left( \alpha_1, \dots, \alpha_N \right)^{\mathrm{tr}} \in \R^N $ and $h \in \mathrm{span} \{ f_1, \dots, f_N \}^{\perp}$.
    Use this function as multiplier for $D_b T_{F, r}(a, b) = 0$ to obtain after integration over $Y$
    \begin{align*}
        \alpha^{\mathrm{tr}} G_F \Lambda(a, b) r = 0.
    \end{align*}
    Since $\alpha$ is arbitrary we conclude \eqref{eq:stationary_1}. Moreover, $D_a T_{F, r}(a, b) = 0$ can be written compactly as \eqref{eq:stationary_2}.
\end{proof}

Note that $\{ f^{(1)}, \dots, f^{(N)} \}$ is weakly linear independent if and only if the matrix $\begin{bmatrix} G_F \\ e^{\mathrm{tr}} \end{bmatrix}$ has (full) rank $N$. 
Now let $\mathrm{rank} \, G_F = \mathrm{dim} \left( \mathrm{span} \{ f_1, \dots, f_N \} \right) =: K$.
If $\sigma' > 0$ on $\R$, we conclude that $(a, b)$ is a critical point of $T_{F, r}$ if and only if $r$ lies in a linear subspace of $\R^N$ given by the null space of $\begin{bmatrix} G_F \\ e^{\mathrm{tr}} \end{bmatrix} \Lambda(a ,b)$, with dimension $N - \mathrm{rank} \begin{bmatrix} G_F \\ e^{\mathrm{tr}} \end{bmatrix} $, which is $N - K$ if $e \in \mathrm{range} \, G_F$ or $N-K-1$ if $K \le N - 1$ and $e \notin \mathrm{range} \, G_F$.
Clearly, if $\sigma' > 0$ on $\R$ and $\{ f_j \}_{j=1, \dots, N}$ are weakly linear independent, then no critical point exists unless $r = 0$ (which means $T_{F, r} \equiv 0$).

At first we remark that the definition of the co-state $r^{(j)}$ in \eqref{eq:backward} and \eqref{eq:stationary_a} imply that
\begin{align*}
    \sum_{j=1}^N r^{(j)} (y, t) = \sum_{j=1}^N \omega^{(j)} (y), \quad t \in [0, T]
\end{align*}
if $D_a J \left( a_{\infty}, b_{\infty}, w_{\infty}, \mu_{\infty} \right) = 0$, $D_b J \left( a_{\infty}, b_{\infty}, w_{\infty}, \mu_{\infty} \right) = 0$.

Assume now that $\{ f_{I}^{(j)} \}_{j=1, \dots, N}$ are weakly linear independent functions. Since linear independence of functions is stable under small perturbations we conclude from Proposition \ref{prop:char_wli} that weak linear independence is as well. Thus, there exists $ T_1 \in (0, T]$ such that $\{ f^{(j)}_{a_{\infty}, b_{\infty}} (\cdot, t) \}_{j=1, \dots, N}$ is a weakly linear independent set of functions for all $0 \le t \le T_1$. Note that $T_1$ only depends on $\max_{j=1, \dots, N} \| f^{(j)}_I \|_{L^2(Y)} $, on the norm of the inverse of the Gram matrix of $\{ f_{I}^{(j)} - f_{I}^{(J)} \}_{j=1, \dots, N, \, j \neq J}$ and on $a_{\infty}, b_{\infty}$, see the first estimate below Theorem \ref{theorem:well-posedness_forward}. 
Since $a \in L^2 \left( Y \times (0,T) \right)$ and $b \in L^2 \left( Y \times Y \times (0,T) \right)$ we deduce that $a_{\infty} (\cdot, t)$, $b_{\infty}(\cdot, \cdot, t)$ are well defined a.e. for $t \in (0, T)$ with values in $L^2(Y)$ and $L^2(Y \times Y)$ respectively. Therefore $\xi^{(j)}_{a_{\infty}, b_{\infty}} (\cdot, t) $ is well defined for a.e. $t \in (0, T)$ with values in $L^2(Y)$. 
Multiplying \eqref{eq:stationary_a}, \eqref{eq:stationary_b} by a test function $\varphi \in L^2(Y)$ and integrating over $Y$ gives
\begin{align*}
    \sum_{j=1}^N \lambda^{(j)} (t) =0, \; \; \sum_{j=1}^N \lambda^{(j)}(t) f^{(j)}_{a_{\infty}, b_{\infty}} (z, t) = 0 \quad a.e. \, z \in Y, t \in (0, T)
\end{align*}
with $\lambda^{(j)}(t) := \int_Y \sigma' \left( \xi^{(j)}_{a_{\infty}, b_{\infty}} (y, t) \right) r^{(j)} (y, t) \varphi(y) dy$. 
Note that $f^{(j)}_{a_{\infty}, b_{\infty}} \in C \left( [0, T]; L^2(Y) \right)$. Weak linear independence of $\{ f^{(j)}_{a_{\infty}, b_{\infty}}(\cdot, t) \}_{j=1, \dots, N}$ for $0 \le t \le T_1$ gives, since the test function $\varphi$ is arbitrary  in $L^2(Y)$
\begin{align*}
    \sigma' \left( \xi^{(j)}_{a_{\infty}, b_{\infty}} (y, t) \right) r^{(j)} (y, t) = 0 \quad a.e. \, y \in Y, t \in (0, T_1).
\end{align*}
Assume now that $\sigma' > 0$ on $\R$, i.e., $\sigma$ is a strictly increasing activation function, as in the case for the arctan and sigmoid activations (among others). Then $r^{(j)} (y, t) = 0$ a.e. in  $Y \times (0, T_1)$, which together with \eqref{eq:backward} implies
\begin{align*}
    r^{(j)} (y, t) = 0 \quad a.e. \, \mathrm{in} \, Y \times (0, T), j=1, \dots, N,
\end{align*}
and
\begin{align}
    \label{eq:identity_1}
    r^{(j)} (y, T) = \int_{U} \left( P_{\text{pre}}^{(j)}(u) - P^{(j)}(u) \right) h'\left(Z^{(j)}(u)\right) w(u,y) du \equiv 0 \quad a.e. \, y \in Y, j =1, \dots, N.
\end{align}

\begin{remark} \label{remark1}
    If $h(\xi) = \xi$ (regression task), $w(u, y) = \delta(u - y)$, $\mu = 0$ (i.e., $U= Y)$ and, obviously, $J=J(a,b)$ since $w$ and $\mu$ are given and fixed, we immediately conclude from \eqref{eq:identity_1} that $(a_{\infty}, b_{\infty}) \in L^2(Y) \times L^2(Y \times Y)$ is a critical point of $J$ if and only if $\{ P^{(j)} \}_{j=1, \dots, N}$ are reachable from $\{ f_I^{(j)} \}_{j=1, \dots, N}$ through the forward evolution with control parameters $(a_{\infty}, b_{\infty})$.
\end{remark}

\begin{remark} \label{remark:least_square}
    Let $h(\xi) = \xi$, $w \in L^2(U \times Y)$ and $\mu \in L^2(U)$. Then, \eqref{eq:identity_1} implies
    \begin{align*}
        \int_U \left( \int_Y w(u, z) f^{(j)}(z, T) dz + \mu(u) - P^{(j)}(u) \right) w(u, y) du = 0 \quad \mathrm{a.e.} \, y \in Y, j=1, \dots, N,
    \end{align*}
    which we compactly rewrite as
    \begin{align*}
        W^* W f^{(j)} \left( \cdot, T \right) = W^* \left( P^{(j)} - \mu \right), \quad j=1, \dots, N,
    \end{align*}
    where $\left( W \varphi \right)(u) = \int_Y w(u,y) \varphi(y) dy$. We say that for $j=1, \dots, N$, the functions $f^{(j)}(\cdot, T)$ are least-square solutions of "$W f^{(j)}(\cdot, T) = P^{(j)}-\mu$". Note that $f^{(j)}(\cdot, T) \in {\arg \min}_{\xi \in L^2(U)} \| W \xi + \mu - P^{(j)} \|_{L^2(Y)}$. Moreover, since $w \in L^2 (U \times Y)$, $W$ is compact and a compact operator has closed range if and only if it is of finite rank. The latter is thus a sufficient and necessary condition to guarantee the existence of a least-square solution for arbitrary given $P^{(j)}$, $\mu \in L^2(U)$.
\end{remark}

We continue the study of the stationary conditions by analyzing \eqref{eq:stationary_w} and $\eqref{eq:stationary_mu}$.

Let $h(\xi) = \xi$ and denote $F(y) = \left( f^{(1)}(y, T), \dots, f^{(N)}(y, T) \right)^{\mathrm{tr}}$, $P(u) = \left( P^{(1)}(u), \dots, P^{(N)}(u) \right)^{\mathrm{tr}}$ and $\tilde{e} = \frac{e}{\sqrt{N}}$. Multiplying $\eqref{eq:stationary_mu}$ by $F(y) \cdot \frac{e}{N}$ and subtracting it from \eqref{eq:stationary_w}, we obtain
\begin{align}
    \label{eq:w_reduction}
    \int_{Y} w(u, z) F(y)^{\mathrm{tr}}  \left( I - \tilde{e} \otimes \tilde{e} \right) F(z) dz = F(y)^{\mathrm{tr}} \left( I -  \tilde{e} \otimes \tilde{e} \right) P(u) \quad \mathrm{a.e.} \; u, y \in U \times Y.
\end{align}
Let $K:= \mathrm{dim} \left( \mathrm{span} \{ f^{(1)}(\cdot, T), \dots, f^{(N)}(\cdot, T) \} \right)$ and let $\{ \sigma_1, \dots, \sigma_K \} \in L^2(Y)$ be an orthonormal system in $\mathrm{span} \{ f^{(1)}(\cdot, T), \dots, f^{(N)}(\cdot, T) \}$. 
We expand $w(u, y) = \sum_{j=1}^K w_j(u) \sigma_j (y) + h_w(u, y)$, where $\left( w_1, \dots, w_K\right) \in L^2(U)^K$ and $h_w(u, \cdot) \in \mathrm{span} \{ \sigma_1, \dots,  \sigma_K \}^{\perp}$ for a.e. $u \in U$. 
Note that the minimization process \eqref{eq:learning_problem}, \eqref{eq:loss} or \eqref{eq:loss_log} is independent of $h_w$ since it is annihilated in the integral defining $Z^{(j)}$. However, $h_w$ appears in $\omega^{(j)}$ and consequently in $r^{(j)}$ and in the first variations $D_a J, D_b J$ (but not in $D_w J, D_{\mu} J$) as well as in the least-squares formulation of Remark \ref{remark:least_square}. This accounts for the fact that we admit also variations of $f^{(j)}(\cdot, T)$ whose projections on $\{ f^{(1)}(\cdot, T), \dots, f^{(N)}(\cdot, T) \}^{\perp}$ do not vanish.
Substituting the expansion for $w$ into \eqref{eq:w_reduction} yields
\begin{align*}
    \Omega^{\mathrm{tr}} \left( I - \tilde{e} \otimes \tilde{e} \right) \Omega \begin{pmatrix} w_1(u) \\ \vdots \\ w_K(u) \end{pmatrix} = \Omega^{\mathrm{tr}} \left( I - \tilde{e} \otimes \tilde{e} \right) P(u) \quad \mathrm{a.e.} \; u \in U,
\end{align*}
where $\Omega := \left( \Omega_{ij} \right)_{i=1,\dots,N ; j=1, \dots, K}$ with $\Omega_{ij} = \int_{Y} f^{(i)} (y, T) \sigma_j (y) dy$. Defining $A := \left( I - \tilde{e} \otimes \tilde{e} \right) \Omega \in \R^{N \times K}$ we recast the latter equation as
\begin{align}
    \label{eq:w_system}
    A^{\mathrm{tr}} A \begin{pmatrix} w_1(u) \\ \vdots \\ w_K(u) \end{pmatrix} = A^{\mathrm{tr}} \begin{pmatrix} P^{(1)}(u) \\ \vdots \\ P^{(N)}(u) \end{pmatrix} \quad \mathrm{a.e.} \; u \in U.
\end{align}
Note that \eqref{eq:w_system} is a linear system of $K$ equations for a.e. $u \in U$. When $A$ has linearly independent columns, i.e., $\mathrm{rank} A = K$ (which corresponds to the case where $e$ is not in the span generated by the columns of $\Omega$), then $A^{\mathrm{tr}} A$ is invertible and \eqref{eq:w_system} has a unique solution. Otherwise, the rank of $A$ is $K-1$ and \eqref{eq:w_system} is solvable, e.g. by computing the Moore-Penrose inverse \cite{penrose1955generalized} of $A$ denoted by $A^{\dagger}$. Then, a solution of \eqref{eq:w_system} is provided by
\begin{align*}
    \begin{pmatrix} w_1(u) \\ \vdots \\ w_K(u) \end{pmatrix} = A^{\dagger} \begin{pmatrix} P^{(1)}(u) \\ \vdots \\ P^{(N)}(u) \end{pmatrix} \quad \mathrm{a.e.} \; u \in U.
\end{align*}
Finally, from \eqref{eq:stationary_mu} we directly compute $\mu$ as
\begin{align}
    \label{eq:mu}
    \mu(u) = \frac{1}{\sqrt{N}} \left( P(u) \cdot \tilde{e} - \tilde{e}^{\mathrm{tr}} \Omega \begin{pmatrix} w_1(u) \\ \vdots \\ w_K(u) \end{pmatrix} \right) \quad \mathrm{a.e.} \; u \in U.
\end{align}

We summarize the above discussion on the critical points of $J$ in the following Theorem.
\begin{theorem} \label{theorem:w_mu}
    Let $\{ f^{(j)}_I \}_{j=1, \dots, N}$ be weakly linear independent and $h(\xi) = \xi$ for all $\xi \in \R$. 
    Then $\left(a_{\infty}, b_{\infty}, w_{\infty}, \mu_{\infty} \right) \in L^2(Y \times (0, T)) \times L^2(Y \times Y \times (0, T)) \times L^2(U \times Y) \times L^2(U)$ is a critical point of the functional \eqref{eq:loss} if and only if
    \begin{enumerate}
        \item for $j=1, \dots, N$ the functions $f^{(j)}(\cdot, T)$ (i.e., the terminal states of the forward problem \eqref{eq:forward_wellpos}) are least-squares solutions of "$W_{\infty} f^{(j)}(\cdot, T) = P^{(j)} - \mu_{\infty}$" as outlined in Remark \ref{remark:least_square}, \\
        \item $w_{\infty}(u, y) = \sum_{j=1}^K w_{{\infty}_j} (u) \sigma_j (y) + h_{w_{\infty}} (u, y)$, where $\{ \sigma_1, \dots, \sigma_K \} \in L^2(Y)$ is an orthonormal system in $\mathrm{span} \{ f^{(1)}(\cdot, T), \dots, f^{(N)}(\cdot, T) \}$, $h_{w_{\infty}}(u, \cdot) \in \mathrm{span} \{ \sigma_1, \dots,  \sigma_K \}^{\perp}$ with respect to $y$ for a.e. $u$ and $(w_{\infty_1}, \dots, w_{\infty_K} )$ is a $L^2(U)^K$-solution of \eqref{eq:w_system}, \\
        \item $\mu_{\infty}(u)$ is given by \eqref{eq:mu}.
    \end{enumerate}
    
\end{theorem}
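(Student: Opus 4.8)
The plan is to read Theorem \ref{theorem:w_mu} as a bookkeeping statement that decouples the four stationarity conditions \eqref{eq:stationary_a}--\eqref{eq:stationary_mu} into two independent groups: the dynamic conditions on $(a,b)$, which will produce condition (1), and the output-layer conditions on $(w,\mu)$, which will produce conditions (2) and (3). Since $h(\xi)=\xi$ gives $h'\equiv 1$ and $P^{(j)}_{\mathrm{pre}}=Z^{(j)}$, a critical point is by Proposition \ref{prop:first_variations} exactly a simultaneous solution of \eqref{eq:stationary_a}--\eqref{eq:stationary_mu}, and I would prove the two equivalences separately, then combine them.

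For the first group I would show that, under weak linear independence of $\{f^{(j)}_I\}$, conditions \eqref{eq:stationary_a} and \eqref{eq:stationary_b} are equivalent to the single terminal identity $r^{(j)}(\cdot,T)=0$, i.e. \eqref{eq:identity_1}. For the forward (``only if'') implication I would use that weak linear independence is stable under small perturbations (Proposition \ref{prop:char_wli} together with the continuity estimate below Theorem \ref{theorem:well-posedness_forward}), so that the forward states $\{f^{(j)}_{a_\infty,b_\infty}(\cdot,t)\}$ remain weakly linearly independent on a short interval $[0,T_1]$; testing \eqref{eq:stationary_a}, \eqref{eq:stationary_b} against an arbitrary $\varphi\in L^2(Y)$ and invoking $\sigma'>0$ then forces $r^{(j)}\equiv 0$ on $(0,T_1)$, and the linearity of the backward problem \eqref{eq:backward} propagates this to all of $(0,T)$, in particular to $t=T$. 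The converse implication is where the decoupling becomes transparent: if $r^{(j)}(\cdot,T)=0$, then \eqref{eq:backward} is a homogeneous linear final-value problem, hence $r^{(j)}\equiv 0$, and the formulas of Proposition \ref{prop:first_variations} give $D_aJ=D_bJ=0$ with no further hypotheses on $\sigma$. Finally I would translate $r^{(j)}(\cdot,T)=0$ into the least-squares statement of Remark \ref{remark:least_square}, which is exactly condition (1).

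For the second group I would show that \eqref{eq:stationary_w} and \eqref{eq:stationary_mu} are equivalent to the pair ``condition (2) and condition (3)'' by the reversible linear-algebra reduction already set up in the text: eliminating $\mu$ by subtracting $\tfrac1N F(y)\cdot e$ times \eqref{eq:stationary_mu} from \eqref{eq:stationary_w} yields \eqref{eq:w_reduction}; expanding $w=\sum_j w_j\sigma_j+h_w$ and testing against the orthonormal functions $\sigma_i$ collapses \eqref{eq:w_reduction} to the finite linear system \eqref{eq:w_system} (condition (2)), while \eqref{eq:stationary_mu} solved for $\mu$ is precisely \eqref{eq:mu} (condition (3)). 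The reduction is genuinely an equivalence because $h_w$ is annihilated both in $Z^{(j)}$ and in the entries $\Omega_{ij}$, and because $I-\tilde e\otimes\tilde e$ is an orthogonal projection that kills $e$; thus \eqref{eq:stationary_w} can be reconstructed from \eqref{eq:w_reduction} and \eqref{eq:stationary_mu}, and no information is lost in either direction. Combining the two groups yields the claimed equivalence.

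The main obstacle is the first equivalence, and specifically its forward direction: one must convert the two summed, $\sigma'$-weighted orthogonality relations \eqref{eq:stationary_a}, \eqref{eq:stationary_b} into the per-index vanishing of each $r^{(j)}$. This is exactly where the weak-linear-independence machinery and its perturbative stability on $[0,T_1]$ are essential, and where strict monotonicity $\sigma'>0$ is needed to pass from $\sigma'(\xi^{(j)})r^{(j)}=0$ to $r^{(j)}=0$; I would flag that this monotonicity, used throughout the preceding discussion, is the implicit standing hypothesis that makes the ``only if'' direction valid. By contrast, the reverse direction (which rests only on backward uniqueness for \eqref{eq:backward}) and the entire $(w,\mu)$-analysis are routine once the projection identities are in place.
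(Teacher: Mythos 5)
Your proposal is correct and follows essentially the same route as the paper: the paper's ``proof'' is precisely the preceding discussion, which groups \eqref{eq:stationary_a}--\eqref{eq:stationary_b} (reduced via perturbative stability of weak linear independence on $[0,T_1]$ and $\sigma'>0$ to $r^{(j)}\equiv 0$, hence $r^{(j)}(\cdot,T)=0$ and the least-squares condition of Remark \ref{remark:least_square}) separately from \eqref{eq:stationary_w}--\eqref{eq:stationary_mu} (reduced via the projection $I-\tilde e\otimes\tilde e$ to \eqref{eq:w_system} and \eqref{eq:mu}). Your explicit treatment of the converse directions (backward uniqueness for \eqref{eq:backward} and reversibility of the $(w,\mu)$ reduction) and your flagging of $\sigma'>0$ as an implicit standing hypothesis of the theorem are accurate refinements of what the paper leaves tacit.
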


The theory developed above does not address the ReLU activation function $\sigma$, as it is not strictly increasing. Henceforth, we discuss this in the following remark.
\begin{remark}
    Let $\sigma : \R \rightarrow \R$ be smooth, non-decreasing and such that $\sigma'(w) = 0$ implies $\sigma(w) =0$ (e.g. a smoothed version of ReLu). 
    Assume that $\{ f_I^{(1)}, \cdots, f_I^{(N)} \}$ are weakly linearly independent. Then, there exists $T_1 \in (0, T]$ such that $\{ f^{(1)}(\cdot, t), \cdots, f^{(N)}(\cdot, t) \}$ are weakly linearly independent for all $t \in [0, T_1]$. We reiterate that for $j=1, \dots, N$, $D_a J = 0$, $D_b J = 0$ implies
    \begin{align*}
        \sigma' \left( \xi^{(j)}_{a_{\infty}, b_{\infty}} (y, t) \right) r^{(j)}(y, t) = 0 \quad \mathrm{a.e.} \, y \in Y, \forall t \in [0, T_1].
    \end{align*}
    This implies $\partial_t r^{(j)} = 0$ a.e. in $Y$, $t \in [0, T_1]$ and $r^{(j)} (y, t) = r^{(j)} (y)$ a.e. in $Y$ for all $t \in [0, T_1]$. Let $r^{(j)}(y) =0$ a.e. in $Y_j \subseteq Y$ and $r^{(j)} \neq 0$ a.e. in $Y^{\mathrm{c}}_j$. Then
    \begin{align*}
        \sigma' \left( \xi_{a_{\infty}, b_{\infty}} (y,t) \right) = 0 \quad \mathrm{a.e.} \, (y,t) \in Y^{\mathrm{c}}_j \times (0, T_1).
    \end{align*}
    From the assumption on $\sigma$ we conclude
        \begin{align*}
        \sigma \left( \xi_{a_{\infty}, b_{\infty}} (y,t) \right) = 0 \quad \mathrm{a.e.} \, (y,t) \in Y^{\mathrm{c}}_j \times (0, T_1),
    \end{align*}
    and $\partial_t f^{(j)}(y, t) = 0$ a.e. in $Y^{\mathrm{c}}_j \times (0, T_1)$ follows. Thus, $f^{(j)}(y, t) = f^{(j)}_I (y)$ a.e. in $Y^{\mathrm{c}}_j \times (0, T_1)$. 
    We conclude that those neurons in the set $Y^{\mathrm{c}}_j$ (the set where $r^{(j)} \neq 0$ ), which are uncharged at $t= 0$, under the evolution of $f^{(j)}$, remain uncharged as long as $\{ f^{(1)}(\cdot, t), \dots, f^{(N)} (\cdot, t) \}$ remain weakly linearly independent.
\end{remark}

\begin{remark}
    Note that there is no uniqueness of optimal controls. To see this, let $(f^{(1)}, \dots, f^{(N)}, a, b, w, \mu)$ be a trajectory of \eqref{eq:learning_problem}. Adding any function $b_1 = b_1(y, z, t)$ to the control component $b$ such that $b_1(y, \cdot, t)$ is for a.e. $y \in Y$ orthogonal to $f^{(1)} (\cdot, t), \dots, f^{(N)} (\cdot, t)$, in $L^2(Y)$ does not change the solutions $f^{(1)}, \dots, f^{(N)}$ of the forward problem \eqref{eq:forward} and therefore gives the same MSE \eqref{eq:loss}.
\end{remark}

This simple observation leads to an interesting reformulation of the forward evolution. We decompose
\begin{align*}
    b(y, z, t) = \sum_{l=1}^N b_{l}(y, t) f^{(l)}(z, t) + b^{\perp}(y, z, t),
\end{align*}
where the first term on the right hand side is for a.e. $y \in Y$ in $\mathrm{span} \{ f^{(1)}(\cdot, t), \dots, f^{(N)}(\cdot, t) \}$ and the second term in its orthogonal complement (with respect to $z$ for a.e. $y \in Y$). Then the forward evolutions \eqref{eq:forward} rewrite as
\begin{align*}
    \partial_t f^{(j)}(y, t) = \sigma \left( a - \sum_{l=1}^N b_l (y, t) \int_{Y} f^{(l)}(z, t) f^{(j)}(z, t) dz \right), \quad j=1, \dots, N,
\end{align*}
with new control parameters $(a, b_1, \dots, b_N) \in L^1 ( Y \times (0, T) )^{N+1}$. The price to pay for this complexity reduction of the control set is that the forward evolution now becomes a fully nonlinearly and non-locally coupled IVP for $( f^{(1)}, \dots, f^{(N)} ) \in C\left( [0, T]; L^2(Y)^N \right)$. Note that different control vectors $(a_1, \vec{b}_1), (a_2, \vec{b}_2)$ give the same forward evolution if and only if $(\vec{b}_1 - \vec{b}_2, a_2 - a_1)$ is in the (at least one-dimensional) kernel of $\begin{bmatrix} G_F \\ e^{\mathrm{tr}} \end{bmatrix}^{\mathrm{tr}}$ for a.e. $y \in Y, t \in (0, T)$. 
In this context the functional \eqref{eq:Tfr} can actually be interpreted as a function from $(a, \vec{b}) \in \R^{N+1}$ into $\R$, with $\vec{b} = (b_1, \dots, b_N)^{\mathrm{tr}}$ and
\begin{align*}
    b(z) = \sum_{j=1}^N b_j f_j (z) + b(z)^{\perp}.
\end{align*}
Then \eqref{eq:stationary_1}, \eqref{eq:stationary_2} become equations for the $(N+1)$ real control parameters.

A powerful mathematical tool to minimize a functional, once its first variation is computed, is the gradient flow technique. For a more in-depth discussion on this topic, we refer to the extensive literature, for instance, \cite{mielke2023introduction, santambrogio2017euclidean}. Let $F : X \rightarrow \R$ be a functional where $X$ is a Hilbert space, in our context $X = L^2$. The gradient flow of $F$ is given by
\begin{align*}
    \begin{cases}
       \dot{x}(\tau) = - D_x F(x(\tau)), \quad \tau > 0 \\
       x (0) = x_0,
    \end{cases}
\end{align*}
where $\tau$ is a time-like variable. Moreover, for $\Delta t_l > 0$ the discrete gradient flow (steepest descent method) is given by
\begin{align*}
    \begin{cases}
        x_{l+1} = x_l - \Delta t_l D_x F(x_l), \quad l = 0, 1, \dots \\
        x_0 \in X.
    \end{cases}
\end{align*}

We remark that the function $F$ is non-increasing along both discrete and continuous gradient flows.

\begin{proposition} \label{prop:stationarity}
    The gradient flow of the loss functional $J \equiv J (a, b, w, \mu)$ \eqref{eq:loss} is
    \begin{align*}
        \frac{\partial \mu(u; \tau)}{\partial \tau} = - D_{\mu} J &= - \frac{1}{N} \sum_{j=1}^N \left( P_{\text{pre}}^{(j)}(u; \tau) - P^{(j)}(u) \right) h'\left( Z^{(j)}(u; \tau) \right) \\
        \frac{\partial w(u, y; \tau)}{\partial \tau} = - D_{w} J &= - \frac{1}{N} \sum_{j=1}^N \left( P_{\text{pre}}^{(j)}(u; \tau) - P^{(j)}(u) \right) h'\left(Z^{(j)}(u; \tau)\right) f^{(j)}_{a, b}(y, T; \tau) \\
        \frac{\partial a(y, s; \tau)}{\partial \tau} = - D_a J &= - \frac{1}{N} \sum_{j=1}^N \sigma' \left( \xi^{(j)}_{a, b} (y, s; \tau) \right) r^{(j)} (y, s; \tau) \\
        \frac{\partial b(y, z, s; \tau)}{\partial \tau} = - D_b J &= \frac{1}{N} \sum_{j =1}^N f^{(j)}_{a, b}(z, s; \tau) \sigma' \left( \xi^{(j)}_{a, b}(y, s; \tau) \right) r^{(j)}(y, s; \tau),
    \end{align*}
    for $\tau >0$, where $\xi^{(j)}_{a, b} := a - B_b f^{(j)}_{a, b}$, $f_{a, b}^{(j)}$ solves the forward problem \eqref{eq:forward_wellpos} and $r^{(j)}$ solves the backward problem \eqref{eq:backward}.
\end{proposition}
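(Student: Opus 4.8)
The plan is to read Proposition~\ref{prop:stationarity} as nothing more than the instantiation of the generic continuous gradient flow $\dot{x}(\tau) = - D_x F(x(\tau))$ in the product Hilbert space
\begin{equation*}
X = L^2(Y \times (0,T)) \times L^2(Y \times Y \times (0,T)) \times L^2(U \times Y) \times L^2(U),
\end{equation*}
with $F = J$. The essential input is already available: Proposition~\ref{prop:first_variations} identifies the full Gateaux derivative $DJ = (D_a J, D_b J, D_w J, D_\mu J)$ as a genuine element of $X$. Because each component was obtained by extracting the representing function from an $L^2$ pairing (the displays culminating in \eqref{eq:D_mu}, \eqref{eq:D_w}, \eqref{eq:D_a} and \eqref{eq:D_b}), $DJ$ is precisely the Riesz gradient of $J$ relative to the $X$-inner product. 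The flow $\partial_\tau x = - D_x J$ then decouples, component by component, into the four displayed evolution equations.

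Concretely, I would first introduce the artificial flow time $\tau$ and let the controls $(a, b, w, \mu) = (a(\cdot;\tau), b(\cdot;\tau), w(\cdot;\tau), \mu(\cdot;\tau))$ evolve in $\tau$. Second, I would stress that \emph{every} auxiliary object inherits a $\tau$-dependence through the instantaneous controls: at each fixed $\tau$ the forward problem \eqref{eq:forward_wellpos} driven by $(a(\cdot;\tau), b(\cdot;\tau))$ yields $f^{(j)}(\cdot,\cdot;\tau)$ and hence $\xi^{(j)}_{a,b}(\cdot,\cdot;\tau) = a - B_b f^{(j)}_{a,b}$; the terminal layer produces $Z^{(j)}(\cdot;\tau)$ and thereby $P^{(j)}_{\text{pre}}(\cdot;\tau)$; and the backward (final-value) problem \eqref{eq:backward} produces the co-state $r^{(j)}(\cdot,\cdot;\tau)$. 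Third, substituting the explicit expressions for $D_\mu J$, $D_w J$, $D_a J$, $D_b J$ from Proposition~\ref{prop:first_variations} into $\partial_\tau(\cdot) = - D_{(\cdot)} J$ reproduces the four stated equations verbatim; the positive sign in the $b$-equation arises only because $D_b J$ already carries an overall minus sign, so $-D_b J$ flips it back.

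Since the claim merely rewrites the definition of gradient flow with the precomputed first variations, there is no genuine analytic obstacle and no convergence or well-posedness assertion is being made here. The only point that genuinely requires care is the \textbf{bookkeeping of the forward--backward coupling}: the state $f^{(j)}$ and co-state $r^{(j)}$ are not frozen data but are themselves functionals of the current controls and must be re-solved at each flow time $\tau$. I would therefore make this coupling explicit, observing that \eqref{eq:forward_wellpos} and \eqref{eq:backward} act as constraint equations parametrized by $\tau$, so that the displayed system is in truth a forward--backward--coupled flow rather than a self-contained ODE in the controls alone. This is the feature I would emphasize in place of any technical difficulty.
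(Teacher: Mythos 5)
Your proposal is correct and matches the paper's (implicit) argument exactly: the paper offers no separate proof, since the proposition is just the generic gradient flow $\partial_\tau x = -D_x J$ instantiated with the Gateaux derivative $DJ = (D_a J, D_b J, D_w J, D_\mu J)$ already computed in Proposition~\ref{prop:first_variations}. Your added emphasis on the $\tau$-dependence of $f^{(j)}$, $r^{(j)}$, $\xi^{(j)}_{a,b}$ through the instantaneous controls is precisely the point the paper itself makes in the remarks following the proposition.
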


The gradient flow updates the control parameter vector $(a, b, w, \mu)$ along the time-like parameter $\tau$. The dependence of the forward and backward propagations on $\tau$ stems solely from their dependence on the control vector.

Note that we need to solve $N$ forward-backward coupled evolution equations in each step of the discrete gradient descent method. The coupling is only 'one-directional', i.e., the $N$ forward problems \eqref{eq:forward} are solved first as they are independent of the backward solutions. The forward solutions $f^{(j)}$ are then used to set up and solve the $N$ back propagations \eqref{eq:backward} for the backward solutions $r^{(j)}$. There is no coupling between forward propagations with different superscripts $(j)$ as well as there is no coupling between different index backpropagations.

\section{Controllability} \label{section:controllability}
In this section, we explore the controllability of the forward problem \eqref{eq:forward}. Specifically, we seek to identify controls that enable the system's initial state to reach a desired terminal state, building on the discussion in Remark \ref{remark1}.
As in the previous section, let $f = f(y,t)$, $a = a(y,t)$ and $ \left( B_b \varphi \right)(y) = \int_Y b(y, z, t) \varphi(z, t) dz$ for $y \in Y$ and $t \in [0, T]$. Let $f$ solve the IVP
\begin{align} \label{eq:forward_controllability}
    \begin{cases}
    \partial_t f = \sigma \left( a - B_b f \right) \\
    f(t=0) = f_I
    \end{cases}
\end{align}
for some control $(a, b)$, and let $f$ satisfy $f(\cdot, T) = \tilde{f}$. We start the discussion with a formal argument to illustrate that 'joint' controllability of $N > 1$ states cannot be expected, even locally around a given state. For $j=1, \dots, N$ we introduce perturbations $f_{I, \epsilon}^{(j)}$, $\tilde{f}_{I, \epsilon}^{(j)}$, where $f_{I, \epsilon}^{(j)} = f_I + O(\epsilon)$ (initial states) and $\tilde{f}_{\epsilon}^{(j)} = \tilde{f} + O(\epsilon)$ (terminal states).

In this section, we aim to address the following question: is there a control $(a_{\epsilon}, b_{\epsilon})$ with $a_{\epsilon} = a + O(\epsilon)$, $b_{\epsilon} = b + O(\epsilon)$ such that for $j=1, \dots, N$ the solutions $f^{(j)}_{\epsilon}$ of
\begin{align*}
    \begin{cases}
    \partial_t f^{(j)}_{\epsilon} = \sigma \left( a_{\epsilon} - B_{b_{\epsilon}} f^{(j)}_{\epsilon} \right) \\
    f^{(j)}_{\epsilon}(t=0) = f^{(j)}_{I, \epsilon}        
    \end{cases}
\end{align*}
satisfy $f^{(j)}_{\epsilon} (\cdot, t=T) = \tilde{f}^{(j)}_{\epsilon}$ and $f^{(j)}_{\epsilon} = f + O(\epsilon)$ for $t \in [0, T]$?

We start by setting, for $j=1, \dots, N$,
\begin{align*}
    f^{(j)}_{\epsilon} &= f + \epsilon g^{(j)} + O(\epsilon^2), \\
    a_{\epsilon} &= a + \epsilon \alpha + O(\epsilon^2), \\
    b_{\epsilon} &= b + \epsilon \beta + O(\epsilon^2), \\
    f^{(j)}_{I, \epsilon} &= f_I + \epsilon g^{(j)}_I + O(\epsilon^2), \\
    \tilde{f}^{(j)}_{\epsilon} &= \tilde{f} + \epsilon \tilde{g}^{(j)} + O(\epsilon^2).
\end{align*}
As before we set $\xi_{a, b}:= a - B_b f$. Clearly, by expansion
\begin{align*}
    \begin{cases}
        \partial_t g^{(j)} = - \sigma'(\xi_{a,b}) B_b g^{(j)} + \sigma'(\xi_{a, b}) \left( \alpha - B_{\beta} f \right) \\
        g^{(j)}(t=0) = g^{(j)}_I.
    \end{cases}
\end{align*}
Denote $h^{(j)} := g^{(j)} - g^{(1)}$, $j=2, \dots, N$. Then $h^{(j)}$ solves
\begin{align*}
    \begin{cases}
        \partial_t h^{(j)} = - \sigma'(\xi_{a, b}) B_b h^{(j)} \\
        h^{(j)}(t=0) = g^{(j)}_I - g_I^{(1)}
    \end{cases}
\end{align*}
and $g^{(j)}(\cdot, t=T) - g^{(1)}(\cdot, t=T) =: h^{(j)}(t=T) = M_{a, b}(T, 0) \left( g_I^{(j)} - g_I^{(1)} \right)$ for $j=2, \dots, N$. Thus, $g^{(j)}(\cdot, t=T) - g^{(1)}(\cdot, t=T) = \tilde{g}^{(j)} - \tilde{g}^{(1)}$ if and only if
\begin{align}
    \label{eq:controllability_condition}
    \tilde{g}^{(j)} - \tilde{g}^{(1)} = M_{a, b}(T, 0) \left( g_I^{(j)} - g_I^{(1)} \right).
\end{align}
The answer to the local multi-state controllability question is generally negative, in particular if \eqref{eq:controllability_condition} does not hold. We shall later on prove a more general version of the above, but at first we turn to the question of single state controllability.

\subsection{Controllability of stationary states} \label{controllability_stationary}
As above we linearize the forward problem \eqref{eq:forward_controllability} with respect to $(f, a, b)$ in direction $(g, \alpha, \beta)$:
\begin{align}
    \label{eq:lin_controllability}
    \partial_t g = \sigma' (a - B_b f) (\alpha - B_{\beta} f - B_b g ) \quad y \in Y, t \in (0, T]
\end{align}
with initial condition $g(t=0) = g_I$ in $Y$. For the following argument we assume that $\sigma(0) = 0$ and $\sigma'(0) >0$. Now, let $\left( f_{\infty}, a_{\infty}, b_{\infty} \right)$ be a stationary solution, i.e., $B_{b_{\infty}} f_{\infty} = a_{\infty}$. For an in-depth discussion of steady states and their stability properties we refer to \cite{liu2020selection}. Then, the linearization reads
\begin{align}
    \label{eq:lin_controllability2}
    \partial_t g = - \sigma' (0) B_{b_{\infty}} g + \sigma' (0) \left( \alpha - B_{\beta} f_{\infty} \right) =: M g + N(\alpha, \beta),
\end{align}
where $M: L^2(Y) \rightarrow L^2(Y)$ is defined by $Mg := - \sigma'(0) B_{b_{\infty}} g$, and $N :L^2(Y) \times L^2(Y \times Y) \rightarrow L^2(Y)$ is defined by $N(\alpha, \beta) := \sigma'(0) \left( \alpha - B_{\beta} f_{\infty} \right)$. We define the resolvent operator (i.e., the evolution semigroup) $R(t_1, t_2) = e^{- \sigma'(0) (t_1 - t_2) B_{b_{\infty}}}$ and the controllability Gramian $\mathcal{G} : L^2(Y) \rightarrow L^2(Y)$ \cite{coron2007} given by
\begin{align*}
    \mathcal{G} := \int_0^T R(T, \tau) N N^* R(T, \tau)^* d\tau.
\end{align*}
It is straightforward to compute $N^* \gamma = \sigma'(0) \left( \gamma, - \gamma \otimes f_{\infty} \right)$, where $\left( \gamma \otimes f_{\infty} \right)(y, z) := \gamma(y) f_{\infty}(z)$. Thus, $N N^* = \sigma'(0)^2 \left( 1 + \int_Y f_{\infty}^2 (z) dz \right) I$ and
\begin{align*}
    \mathcal{G} &= \sigma'(0)^2 \left( 1 + \int_Y f_{\infty}^2 (z) dz \right) \int_0^T R(T, \tau) R(T, \tau)^* d\tau \\
    &= \sigma'(0)^2 \left( 1 + \int_Y f_{\infty}^2 (z) dz \right) \int_0^T e^{- \sigma'(0) (T- \tau) B_{b_{\infty}}} e^{- \sigma'(0) (T- \tau) B_{b_{\infty}^*}} d\tau.
\end{align*}

Note that $\mathcal{G}$ is self adjoint on $L^2(Y)$. We now prove that $\mathcal{G}$ is coercive. We compute for $r_0 \in L^2(Y)$
\begin{align*}
    \left( r_0, \mathcal{G} r_0 \right)_{L^2(Y)} = \sigma'(0)^2 \left( 1 + \int_Y f_{\infty}^2 (z) dz \right) \int_0^T \| r(t) \|^2_{L^2(Y)} dt = 0,
\end{align*}
where $r(t) = e^{- \sigma'(0) t B_{b^*_{\infty}}} r_0$. Since $B_{b_{\infty}}$ and its adjoint are bounded
\begin{align*}
    \| e^{- \sigma'(0) t B_{b^*_{\infty}}} \| \le e^{\sigma'(0) |t| \| b^*_{\infty} \|_{L^2(Y \times Y)}} \le \sqrt{C(T)}, \quad 1 \le C(T) < \infty
\end{align*}
and we conclude that for every $r_0 \in L^2(Y)$
\begin{align}
    \| r_0 \|^2_{L^2(Y)} = \| e^{\sigma'(0) t B_{b_{\infty}^*}} r(t) \|^2_{L^2(Y)} \le C(T) \| r(t) \|^2_{L^2(Y)}.
\end{align}
Thus,
\begin{align*}
    \int_0^T \| r(t) \|^2_{L^2(Y)} dt \ge \frac{1}{C(T)} \| r_0 \|^2_{L^2(Y)}.
\end{align*}
Consequently, \cite[Theorem 2.42]{coron2007} implies that the linearized problem \eqref{eq:lin_controllability2} is exactly controllable in $L^2(Y)$. This implies - for finite rank operators $B$ as in \eqref{eq:z_ODE} (where $L^2(Y)$ is replaced by $\R^M$ and $B(t)$ is a $M \times M$ matrix) - that the nonlinear problem \eqref{eq:forward} is small-time locally controllable at an equilibrium (see \cite[Theorem 3.8]{coron2007}).

\subsection{Controllability of general states}
Let $f =f(y, t)$, $a = a(y, t)$ and $b = b(y, z, t)$ be a trajectory of \eqref{eq:forward_controllability}. Consider \eqref{eq:lin_controllability} for $\alpha = \alpha(y, t)$ and $\beta = \beta(y, z, t)$, then
\begin{align}
    \label{eq:controll_lin_general}
    \partial_t g = -\sigma' \left( \xi_{a, b} \right) B_b g + \sigma' \left( \xi_{a, b} \right) \left( \alpha - B_{\beta} f \right) =: M(t) g + N(t)(\alpha, \beta)
\end{align}
with initial condition $g(t=0) = g_I$ in $Y$, where $M(t): L^2(Y) \rightarrow L^2(Y)$ is defined by $M(t) g := - \sigma'\left( \xi_{a, b} \right) B_{b} g$, and $N(t) :L^2(Y) \times L^2(Y \times Y) \rightarrow L^2(Y)$ is defined by $N(t)(\alpha, \beta) := \sigma'\left( \xi_{a, b} \right) \left( \alpha - B_{\beta} f \right)$.

An analogous computation to the one carried in Section \ref{controllability_stationary} yields
\begin{align*}
    N(t) N(t)^* = \delta^2(y, t) I,
\end{align*}
where $\delta^2 (y, t) := \left( \sigma' \left( \xi_{a, b} \right) \right)^2  \left( 1 + \int_Y f(z, t)^2 dz \right)$. The controllability Gramian reads
\begin{align*}
    \mathcal{G} = \int_0^T M_{a, b}(t, \tau) N(t) N(t)^* M_{a, b}(t, \tau)^* d\tau
\end{align*}
where
\begin{align*}
    \begin{cases}
        \partial_t M_{a, b}(t, \tau) = M(t) M_{a, b}(t, \tau) \\
        M_{a, b}(t = \tau, \tau) = I.
    \end{cases}
\end{align*}

\begin{proposition}
    Let $f, a \in L^{\infty} \left( Y \times (0, T) \right)$, $b \in L^{\infty} \left( Y \times Y \times (0, T) \right)$. Let $\sigma'$ be bounded above on $\R$ and below away from $0$ uniformly on bounded sets of $\R$. Then \eqref{eq:controll_lin_general} is controllable.
    \begin{proof}
        From the properties of the evolution system $M_{a, b}(t, \tau)$ and the boundedness of $B_b$ and $B_{b^*}$ we find that there exists $C_0 > 0$ such that $\left( r_0, \mathcal{G} r_0 \right) \ge C_0 \| r_0 \|^2_{L^2(Y)}$ which gives coercivity of $\mathcal{G}$ and \cite[Theorem 2.42]{coron2007} yields the result.
    \end{proof}
\end{proposition}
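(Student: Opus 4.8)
The plan is to reduce exact controllability of \eqref{eq:controll_lin_general} to coercivity of the controllability Gramian $\mathcal{G}$ and then invoke \cite[Theorem 2.42]{coron2007}, exactly as in the stationary analysis of Section \ref{controllability_stationary}. Thus the only thing to verify is the existence of $C_0 > 0$ such that $\left( r_0, \mathcal{G} r_0 \right)_{L^2(Y)} \ge C_0 \| r_0 \|^2_{L^2(Y)}$ for every $r_0 \in L^2(Y)$.

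First I would unfold the quadratic form. Writing $r(\tau) := M_{a,b}(T, \tau)^* r_0$ and using the identity $N(\tau) N(\tau)^* = \delta^2(\cdot, \tau) I$ recorded above, one obtains
\begin{align*}
    \left( r_0, \mathcal{G} r_0 \right)_{L^2(Y)} = \int_0^T \int_Y \delta^2(y, \tau) \, |r(y, \tau)|^2 \, dy \, d\tau .
\end{align*}
Coercivity then follows from two independent lower bounds: a pointwise positive lower bound on the weight $\delta^2$, and a lower bound on $\int_0^T \| r(\tau) \|^2_{L^2(Y)} \, d\tau$ in terms of $\| r_0 \|^2_{L^2(Y)}$.

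For the weight, the $L^\infty$ hypotheses on $a, b, f$ confine the argument $\xi_{a,b} = a - B_b f$ to a fixed bounded subset of $\R$: since $Y$ is bounded, $b \in L^\infty(Y \times Y \times (0,T))$ gives $b(\cdot, \cdot, t) \in L^2(Y \times Y)$ uniformly in $t$, so by the operator-norm bound recalled in the proof of Theorem \ref{theorem:well-posedness_forward} the operator $B_b$ is uniformly bounded on $L^2(Y)$ and $B_b f$ is bounded, while $a$ is bounded by assumption. The hypothesis that $\sigma'$ is bounded below away from $0$ uniformly on bounded sets then yields $\sigma'(\xi_{a,b}) \ge c_0 > 0$, whence $\delta^2(y, \tau) \ge c_0^2$ pointwise. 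For the second bound, the generator $M(\tau) = -\sigma'(\xi_{a,b}) B_b$ is bounded on $L^2(Y)$ uniformly in $\tau$ (using $\sigma' \le L$ on $\R$ and the uniform bound on $\| B_b \|$); denote its bound by $C$. Since $r(\tau)$ solves the backward equation $\partial_\tau r = -M(\tau)^* r$ with $r(T) = r_0$, differentiating $\| r(\tau) \|^2_{L^2(Y)}$ and using $|\langle r, M(\tau) r \rangle| \le C \| r \|^2_{L^2(Y)}$ gives a Gronwall estimate producing $\| r(\tau) \|^2_{L^2(Y)} \ge e^{-2C(T - \tau)} \| r_0 \|^2_{L^2(Y)}$. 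Integrating in $\tau$ and combining with $\delta^2 \ge c_0^2$ yields coercivity with $C_0 = c_0^2 \, \frac{1 - e^{-2CT}}{2C} > 0$.

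The main obstacle is the uniform positive lower bound on $\delta^2$, and everything hinges on confining $\xi_{a,b}$ to a bounded set so that the \emph{bounded below away from $0$ on bounded sets} property of $\sigma'$ can be applied. This is precisely where the $L^\infty$ (rather than merely $L^2$) assumptions on the trajectory $(f, a, b)$ are essential: for a generic strictly increasing $\sigma$ with $\inf_{\R} \sigma' = 0$ (such as the sigmoid), no uniform lower bound on $\sigma'$ holds on all of $\R$, so the Gramian could otherwise fail to be coercive and controllability could be lost, consistent with the instability phenomenon flagged in the introduction.
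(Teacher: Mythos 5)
Your proposal is correct and follows exactly the route the paper takes (the paper's proof is a one-line appeal to coercivity of the Gramian plus \cite[Theorem 2.42]{coron2007}, mirroring the explicit computation of Section \ref{controllability_stationary}); you have simply written out the details the authors leave implicit, and the Gronwall lower bound $\| r(\tau) \|^2_{L^2(Y)} \ge e^{-2C(T-\tau)} \| r_0 \|^2_{L^2(Y)}$ together with $\delta^2 \ge c_0^2$ is precisely what is needed. One small imprecision: to apply the hypothesis on $\sigma'$ you need $\xi_{a,b}$ bounded in $L^{\infty}$, which follows from the direct pointwise estimate $|B_b f| \le \| b \|_{L^{\infty}} \| f \|_{L^{\infty}} |Y|$ rather than from the $L^2$ operator-norm bound you cite, but this does not affect the argument.
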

From \cite[Theorem 3.6]{coron2007} we conclude that the nonlinear problem \eqref{eq:forward_controllability} is locally controllable along the trajectory $(f, a, b)$ in time $T$ for the finite dimensional case as presented in Section \ref{section:section_1}.

We remark that it is not difficult to eliminate the finite rank assumption on $B_b$ by a straightforward limit procedure. 

We now return to the initial question of this section, namely whether we can control the forward problem using a single control $(a, b)$ for $N > 1$ pairs of initial and target functions within a given time $T > 0$. In other words, can there exist a control $(a, b) \in L^1 ((0, T); L^2(Y)) \times L^1 ((0, T)); L^2 (Y \times Y))$ such that the solutions $f^{(j)} = f^{(j)}(y, t)$ of
\begin{align*}
    \begin{cases}
        \partial_t f^{(j)} = \sigma \left( a - B_b f^{(j)} \right) \quad &y \in Y, t \in (0, T] \\
        f^{(j)}(t=0) = f_I^{(j)} (y) \quad &y \in Y,
    \end{cases}
\end{align*}
for $j=1, \dots, N$ and given $\{ f_I^{(1)}, \dots, f_I^{(N)} \}$, satisfy $f^{(j)}(y, t=T) = \tilde{f}^{(j)} (y)$ a.e. in $Y$? The answer in general is no (in a stable way).

Let us assume that $| \sigma' | \le L$ in $\R$, then from estimate \eqref{eq:estimate_1} we get
\begin{align*}
    \| f^{(j)} (t) - f^{(1)} (t) \|_{L^2(Y)} \le \| f^{(j)}_I - f^{(1)}_I \|_{L^2(Y)} \mathrm{exp} \left( L \| b \|_{L^1((0, T); L^2(Y \times Y))} \right).
\end{align*}
Moreover,
\begin{align*}
    \partial_t \left( f^{(j)} - f^{(1)} \right) &= \sigma(\xi_{a,b}^{(j)}) - \sigma(\xi_{a,b}^{(1)}) \\
    &= - \sigma'(\xi^{(1)}_{a, b}) B_b \left( f^{(j)} - f^{(1)} \right) + \Phi
\end{align*}
where $\Phi(y, t) := \frac{\sigma''(\xi_j)}{2} \left( B_b \left( f^{(j)} - f^{(1)} \right) \right)^2$ assuming $\sigma'' \in L^{\infty}(\R) \cap C(\R)$. Here $\xi_j$ is an intermediate value between $\xi_{a,b}^{(j)}$ and $\xi_{a,b}^{(1)}$. Then
\begin{align*}
    \| \Phi (\cdot, t) \|_{L^2(Y)}^2 &\le C \int_Y \left( \int_Y |b(y, z, t)| \big{|} f^{(j)}(z, t) - f^{(1)}(z, t) \big{|} dz \right)^4 dy \\
    &\le C \int_Y \left( \int_Y |b(y, z, t)|^2 dz \right)^2 dy \| f^{(j)}(t) - f^{(1)}(t) \|^4_{L^2(Y)} \\
    &\le C \int_Y \left( |Y|^{\frac{1}{2}} \left( \int_Y |b(y, z, t)|^4 dz \right)^{\frac{1}{2}} \right)^2 \| f^{(j)}(t) - f^{(1)}(t) \|^4_{L^2(Y)} \\
    &= C |Y| \| b(t) \|_{L^4(Y \times Y)}^4 \| f^{(j)}(t) - f^{(1)}(t) \|^4_{L^2(Y)}
\end{align*}
for some constant $C > 0$, which leads to
\begin{align*}
    \| \Phi (\cdot, t) \|_{L^2(Y)} \le C |Y|^{\frac{1}{2}} \| b(t) \|_{L^4(Y \times Y)}^2 \| f^{(j)}(t) - f^{(1)}(t) \|^2_{L^2(Y)}.
\end{align*}
Now let $M^{(1)}_{a, b}(t, \tau)$ be the evolution system generated by $-\sigma' \left( \xi_{a,b}^{(1)} \right) B_b$ introduced in Section \ref{section:forward_back}. Then
\begin{align}
    \label{eq:link_target-initial}
    \tilde{f}^{(j)}-\tilde{f}^{(i)} = M^{(1)}_{a, b}(T, 0) \left( f^{(j)}_{I} - f^{(1)}_I \right) + \nu(y)
\end{align}
where $\nu(y) := \int_0^T \left( M^{(1)}_{a, b}(t, \tau) \Phi(\cdot, \tau) \right)(y) d\tau$. From \eqref{eq:identity_Mj} we get
\begin{align*}
    \| M_{a, b}^{(1)} (t, s) \| \le 1 + L \int_s^t \| b(\tau) \|_{L^2(Y \times Y)} \| M^{(1)}_{a, b} (\tau, s) \| d\tau 
\end{align*}
and through Gronwall's inequality, we obtain
\begin{align*}
    \| M_{a, b}^{(1)} (t, s) \| \le \mathrm{exp} \left( L \| b \|_{L^1 \left( (s, t); L^2(Y \times Y) \right)} \right).
\end{align*}
Thus, 
\begin{align*}
    \| \nu \|_{L^2(Y)} &\le \int_0^T \mathrm{exp} \left( L \| b \|_{L^1 \left( (\tau, T); L^2(Y \times Y) \right)} \right) \| \Phi(\tau) \|_{L^2(Y)} d\tau \\
    &\le C_Y \mathrm{exp} \left( L \| b \|_{L^1 \left( (0, T); L^2(Y \times Y) \right)} \right) \int_0^T \| b(\tau) \|^2_{L^4(Y \times Y)} \| f^{(j)}(\tau) - f^{(1)}(\tau) \|^2_{L^2(Y)} d\tau \\
    &\le C_Y \mathrm{exp} \left(2 L \| b \|_{L^1 \left( (0, T); L^2(Y \times Y) \right)} \right) \| b \|^2_{L^2((0, T); L^4(Y \times Y))} \| f_I^{(j)} - f^{(1)}_I \|_{L^2(Y)}^2,
\end{align*}
where $C_Y > 0$ is a constant that depends on $Y$. Now choose a sequence of initial data $\{ f^{(j)}_{I, \epsilon} \}_{j=1}^N \in L^2(Y)^N$ and a sequence of terminal data $\{ \tilde{f}^{(j)}_{\epsilon} \}_{j=1}^N \in L^2(Y)^N$ such that
\begin{enumerate}
    \item $f^{(1)}_I, \tilde{f}^{(1)}$ are independent of $\epsilon$.
    \item For all $j=2, \dots, N$, $f^{(j)}_{I, \epsilon} - f^{(1)}_I = \epsilon g^{(j)}_I \neq 0 $ with $g^{(j)}_I \in L^2(Y)$ and $\tilde{f}^{(j)}_{\epsilon} - \tilde{f}^{(1)} = \epsilon \tilde{h}^{(j)}_{\epsilon} \neq 0 $ with $\tilde{h}^{(j)}_{\epsilon} \in L^2(Y)$ uniformly in $\epsilon$.
\end{enumerate}
From \eqref{eq:link_target-initial} we conclude that
\begin{align*}
    \tilde{h}^{(j)}_{\epsilon} = M_{a, b}^{(1)}(T, 0) g^{(j)}_I + \frac{\nu}{\epsilon} \quad j=2, \dots, N
\end{align*}
and
\begin{align*}
    \bigg{\|} \frac{\nu}{\epsilon} \bigg{\|}_{L^2(Y)} \le C_Y \epsilon \mathrm{exp} \left(2 L \| b \|_{L^1 \left( (0, T); L^2(Y \times Y) \right)} \right) \| b \|^2_{L^2((0, T); L^4(Y \times Y))} \| g_I^{(j)} \|^2_{L^2(Y)}.
\end{align*}
Choose, e.g. $\tilde{h}^{(j)}_{\epsilon} = \frac{1}{2} M^{(1)}_{a, b}(T, 0) g^{(j)}_I$. Then
\begin{align*}
    g_I^{(j)} = -2 M^{(1)}_{a, b}(T, 0)^{-1} \frac{\nu}{\epsilon} = O(\epsilon)
\end{align*}
if $\| b \|_{L^2( (0,T); L^4(Y \times Y) )} = O(1)$. Thus, there is no control $(a_{\epsilon}, b_{\epsilon})$ with $\| b_{\epsilon} \|_{L^2( (0,T); L^4(Y \times Y) )} = O(1)$ which takes $f^{(j)}_{I, \epsilon}$ into $\tilde{f}^{(j)}_{\epsilon}$. This is obviously an instability phenomenon.

In conjunction with Theorem \ref{theorem:w_mu} this is clearly a negative result for the regression task since in the case of a strictly increasing activation function it excludes the existence of $O(1)$ minimizers of the loss functional $J$ for general weakly linearly independent $O(1)$ training initial data and general $O(1)$ target data. An efficient and commonly used method to tackle this issue is to confine the control functions, either by Tikhonov regularisation of the loss functional (see \cite{liu2020selection}) or by simply imposing pointwise bounds for the control functions $a$ and $b$. The latter approach will be analyzed in detail in the next Section.

\section{Constraints on the controls - The Pontryagin minimum principle} \label{section:max_principle}
We now approach the deep learning problem within the framework of mathematical control theory \cite{evans2005introduction, zabczyk2020mathematical}, following the Pontryagin Minimum Principle (commonly referred to in the literature as the Pontryagin Maximum Principle) as described in \cite{lewis2006maximum, pontryagin2018mathematical}. 
Although the original principle searches for maxima, the same reasoning obviously applies to minima by simply reversing the sign of the loss functional. The Pontryagin Minimum Principle is of particular interest when optimizer parameters vary in regions with boundaries. 
In this context, we aim to find optimal controls for the parameters $a$ and $b$ in the forward problem, assuming that $w$ and $\mu$ are given. 
Thus, the learning task is formulated as a minimization problem, where we look for optimal learning parameters $(\bar{a}, \bar{b})$ of the forward problem \eqref{eq:forward} that minimize the loss functional \eqref{eq:loss}, i.e.,
\begin{align}
    \label{eq:minimization_problem}
    \min_{(a,b) \in \mathscr{C}} J(a, b) = J(\bar{a}, \bar{b}).
\end{align}
Here, $\mathscr{C}$ is the control set, defined by
\begin{align*}
    \mathscr{C} := \big{ \{ } &(a, b) : a \in L^{\infty} \left( (0, T); L^{\infty}(Y) \right), b \in L^{\infty} \left( (0, T); L^{\infty}(Y) \times L^{\infty}(Y) \right), \\ 
    &\left( a(y, t), b(y, z, t) \right) \in A \; \mathrm{a.e.} \; \mathrm{in} \; y, z \in Y, t \in (0, T) \big{ \} },
\end{align*}
where $A$ is a bounded, convex and closed subset of $\R^2$ with a non-empty interior. Also we introduce the set:
    \begin{align}
        \mathscr{A}_{\mathrm{HJB}} := \big{ \{ } (a, b) \in L^{\infty}(Y) \times L^{\infty}(Y \times Y) : \left( a(y), b(y, z) \right) \in A \; \mathrm{for} \; \mathrm{a.e.} \; y \in Y, z \in Y \big{ \} },
    \end{align}
(the notation will be self-explanatory in the next Section).

We define the state variable vector $F(y, t) := \left( f^{(1)}(y, t), \dots, f^{(N)}(y, t) \right)^{\mathrm{tr}}$, $F_I(y) := \left( f^{(1)}_I (y), \dots, f^{(N)}_I (y) \right)^{\mathrm{tr}}$ which solves
\begin{align*}
    \begin{cases}
        \partial_t F = \sigma \left( a e - B_b F \right), \\
        F(y, t=0) = F_I(y),
    \end{cases}
\end{align*}
with the obvious abuse of notation $\sigma(v) = \left( \sigma(v_1), \dots, \sigma(v_N) \right)^{\mathrm{tr}}$ for a vector $v \in R^N$.

We also introduce the co-state variable vector $r = \left( r^{(1)}(y, t), \dots, r^{(N)}(y, t) \right)^{\mathrm{tr}}$ as a solution of the backward problem \eqref{eq:backward} and define the control-theory Hamiltonian
\begin{align}
    \label{eq:hamiltonian}
    H(F, r, a, b) = \sum_{j=1}^N \int_Y \sigma \left( a - B_b f^{(j)} \right) r^{(j)} dy,
\end{align}
for $F, r \in L^2(Y)^N$ and $(a, b) \in \mathscr{C}$. 
The minimum principle states that -- see \cite{bardi1997optimal} -- for an optimal control $(\bar{a}, \bar{b}) \in \mathscr{C}$ and the corresponding trajectory $\bar{F} = \left( \bar{f^{(1)}}, \dots, \bar{f^{(N)}} \right)^{\mathrm{tr}}$ there exists a co-state $\bar{r} = \left( \bar{r^{(1)}}, \dots, \bar{r^{(N)}} \right)^{\mathrm{tr}}$ such that a.e. in $t \in (0, T)$
\begin{align}
    \label{eq:control_hamiltonian}
    H(\bar{F}, \bar{r}, \bar{a}, \bar{b}) &= \sum_{j=1}^N \int_Y \sigma \left( \bar{a} - B_{\bar{b}} \bar{f^{(j)}} \right) \bar{r^{(j)}} dy \notag \\
    &= \min_{(a,b) \in \mathscr{A}_{\mathrm{HJB}}} H(\bar{F}, \bar{r}, a, b) \notag \\
    &= \int_Y \min_{\left( a, b \right) \in \mathscr{A}} \left( \sum_{j=1}^N \sigma \left( a - \int b(z) \bar{f^{(j)}} (z, t) dz \right) \bar{r^{(j)}}(y, t) \right) dy \notag \\
    &= \int_Y \min_{(a, b) \in \mathscr{A}} T_{F(\cdot, t), r(y, t)} (a, b) dy
\end{align}
where we use the notation introduced in \eqref{eq:Tfr} and $\mathscr{A} := \{ (a, b) \in \R \times L^{\infty}(Y) : (a, b(z) ) \in A \; \mathrm{a.e.} \; \mathrm{in} \; Y \}$ is closed, bounded and convex in $\R \times L^2(Y)$. The first equality in \eqref{eq:control_hamiltonian} stems from the direct application of the minimum principle while the second one requires close scrutiny, for which we shall proceed in two steps. 
We prove:
\begin{proposition} \label{proposition_6}
    \begin{enumerate}
        \item \label{first_Statement} $T_{F, r}$ assumes its minimum on $\mathscr{A}$.
        \item \label{second_Statement} Let $\{ f_j \}_{j=1, \dots, N}$ be weakly linear independent. Assume that $\sigma \in C^1(\R)$ and $\sigma' > 0$ on $\R$. Then $T_{F, r}$ assumes its minimum on $\partial \mathscr{A}$ unless $r_j = 0$ for all $j=1, \dots, N$.
    \end{enumerate}
    We remark that in (\ref{second_Statement}) the boundary of $\mathscr{A}$ is understood with respect to the $\R \times L^{\infty}(Y)$ topology.
    \begin{proof}
        Since $\sigma$ is locally bounded on $\R$ we conclude that there exists $m < \infty$ such that
        \begin{align*}
            \inf_{(a,b)\in \mathscr{A}} T_{F, r}(a, b) = m.
        \end{align*}
        Now, let $(a_n, b_n) \in \mathscr{A}$ be a minimizing sequence, i.e., $\lim_{n \rightarrow \infty} T_{F, r}(a_n, b_n) = m$. Since $\mathscr{A}$ is closed and convex in $\R \times L^2(Y)$, it follows from Mazur's theorem that it is weakly closed in $\R \times L^2(Y)$. Additionally, the boundedness of $\mathscr{A}$ allows us to use Eberlein-Šmulian theorem \cite{whitley1967elementary} which guarantees its weak compactness in $\R \times L^2(Y)$. Therefore, there exists a subsequence $(a_{n_k}, b_{n_k})$ such that $ (a_{n_k}, b_{n_k}) \weakconv (a, b) \in \mathscr{A}$ in $\R \times L^2(Y)$ as $n_k \rightarrow \infty$. Since the operator $T_{F, r}$ is continuous in the weak topology of $\R \times L^2(Y)$, we conclude $T_{F, r}(a, b) = m$, which proves (\ref{first_Statement}). 

        To prove (\ref{second_Statement}) we assume that $T_{F, r}$ attains its minimum at $(a, b)$ in the interior of $\mathscr{A}$, namely $\mathring{\mathscr{A}}$, with respect to the $\R \times L^{\infty}(Y)$ topology. Then, $D_a T_{F, r} (a, b) = 0$ and $ D_b T_{F, r} (a, b) = 0$ computed in \eqref{eq:D_a_TFR}, \eqref{eq:D_b_TFR} where $D_a T_{F, r}, D_b T_{F, r}$ denote the Gateaux derivatives. The weak linear independence of $\{ f_j \}_{j=1, \dots, N}$ implies
        \begin{align*}
            \sigma' \left( a - \int_Y b(z) f_j (z) dz \right) r_j = 0 \quad \forall j=1, \dots, N.
        \end{align*}
        Since $\sigma'>0$, the latter identity implies $r_j = 0$ for all $j= 1, \dots, N$. Thus, we conclude that either $T_{F, r}$ attains its minimum on the boundary $\partial \mathscr{A}$ or $r = 0$.
    \end{proof}
\end{proposition}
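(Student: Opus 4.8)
The plan for part (\ref{first_Statement}) is the direct method of the calculus of variations. Because $A$ is bounded and $\sigma$ is continuous, the scalar arguments $a - \int_Y b(z) f_j(z)\,dz$ stay in a fixed bounded interval as $(a,b)$ ranges over $\mathscr{A}$, so $T_{F,r}$ is bounded below; set $m := \inf_{\mathscr{A}} T_{F,r} > -\infty$ and fix a minimizing sequence $(a_n,b_n)$. The set $\mathscr{A}$ is closed, bounded and convex in the Hilbert space $\R \times L^2(Y)$, hence weakly sequentially compact (weak closedness by Mazur, weak compactness by Eberlein--\v{S}mulian), so up to a subsequence $(a_{n_k},b_{n_k}) \weakconv (a,b) \in \mathscr{A}$. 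The decisive structural fact is that $T_{F,r}$ is in fact \emph{weakly continuous}: each functional $b \mapsto \int_Y b(z) f_j(z)\,dz$ is a fixed bounded linear form on $L^2(Y)$ and hence weakly continuous, $a_{n_k} \to a$ in $\R$, and $\sigma$ is continuous, so $T_{F,r}(a_{n_k}, b_{n_k}) \to T_{F,r}(a,b)$. Therefore $T_{F,r}(a,b) = m$ and the minimum is attained.

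For part (\ref{second_Statement}) I would argue by contraposition: suppose the minimum is attained at an interior point $(a,b) \in \mathring{\mathscr{A}}$, the interior taken in the $\R \times L^{\infty}(Y)$ topology so that variations $(a+\varepsilon\alpha, b+\varepsilon\beta)$ with $(\alpha,\beta) \in \R \times L^{\infty}(Y)$ are admissible for small $\varepsilon$. First-order optimality then forces both Gateaux derivatives to vanish, and by \eqref{eq:D_a_TFR}, \eqref{eq:D_b_TFR} (which are available since $\sigma \in C^1$) this reads
\[
\sum_{j=1}^N \lambda_j r_j = 0, \qquad \sum_{j=1}^N \lambda_j r_j\, f_j(y) = 0 \quad \text{a.e. } y\in Y,
\]
with $\lambda_j = \sigma'\!\big(a - \int_Y b(z) f_j(z)\,dz\big)$. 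Writing $\mu_j := \lambda_j r_j$, these two identities say exactly that $\sum_j \mu_j = 0$ and $\sum_j \mu_j f_j = 0$; the definition of weak linear independence of $\{f_j\}$ then yields $\mu_j = 0$ for every $j$. Since $\sigma' > 0$ gives $\lambda_j > 0$, we conclude $r_j = 0$ for all $j$. Contrapositively, if $r \neq 0$ the minimum cannot lie in $\mathring{\mathscr{A}}$, so, since the minimum exists by part (\ref{first_Statement}), it is attained on $\partial \mathscr{A}$.

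The routine ingredient in part (\ref{first_Statement}) is the weak compactness of $\mathscr{A}$, which is immediate in the Hilbert setting. The genuine subtlety, and the point I would treat most carefully, is the interplay of topologies in part (\ref{second_Statement}): compactness in part (\ref{first_Statement}) lives in the $L^2$ topology, whereas the notion of interior that makes all $L^{\infty}$ variations admissible — and thus legitimises setting the Gateaux derivatives to zero in \emph{every} direction — is the $\R \times L^{\infty}(Y)$ topology, exactly as flagged in the statement. One must also confirm that $T_{F,r}$ is Gateaux differentiable in these directions with the claimed derivatives; this follows from $\sigma \in C^1$ together with the fact that the inner arguments depend affinely and continuously on $(a,b)$, so differentiation under the sum and integral is justified. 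Everything else is a direct translation of the two stationarity identities through the definition of weak linear independence.
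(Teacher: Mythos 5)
Your proof is correct and follows essentially the same route as the paper's: the direct method with Mazur plus Eberlein--\v{S}mulian for weak sequential compactness of $\mathscr{A}$ in $\R\times L^2(Y)$ and weak continuity of $T_{F,r}$ for part (1), and interior first-order optimality combined with the definition of weak linear independence and $\sigma'>0$ for part (2). Your added remarks --- spelling out why $T_{F,r}$ is weakly continuous and flagging the $L^2$-versus-$L^\infty$ topology interplay --- only make explicit what the paper leaves implicit.
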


Obviously,
\begin{align*}
    \min_{(a, b) \in \mathscr{C}} H \left( \bar{F}(\cdot, t), \bar{r}(\cdot, t), a, b \right) \ge \int_Y \min_{(a,b) \in \mathscr{A}} T_{\bar{F}(\cdot, t), \bar{r}(y, t)}(a, b) dy.
\end{align*}
To establish equality, which shows the second equality in \eqref{eq:control_hamiltonian}, we now prove a result of the Borel-measurable selection of optimal controls.
\begin{proposition} \label{prop:measurability}
    Let $\tilde{F}, \tilde{r} \in L^1 \left( (0, T); L^2(Y)^N \right)$. Then there exists a Borel-measurable map $(a^*, b^*) : \left( Y \times (0,T) \right) \times \left( Y \times Y \times (0,T) \right) \rightarrow A$ such that $\left( a^* (y, t), b^*(y, \cdot, t) \right) \in \mathrm{argmin} T_{\tilde{F}(\cdot, t), \tilde{r}(y, t)} (a, b)$ a.e. in $(Y \times (0, T))^2$.
\end{proposition}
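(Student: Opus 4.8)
The plan is to read this as a measurable-selection statement for the argmin of a normal integrand and to apply the measurable maximum theorem (equivalently Kuratowski--Ryll-Nardzewski), the only genuine difficulty being that the $b$-component of the control ranges over the infinite-dimensional, non-separable space $L^\infty(Y)$. First I would equip the control set $\mathscr{A}$ with the weak $L^2(Y)$ topology rather than the norm topology of $\R\times L^\infty(Y)$. As observed in the proof of Proposition~\ref{proposition_6}, $\mathscr{A}$ is bounded, convex and closed in $\R\times L^2(Y)$ (using $|Y|<\infty$ to embed $L^\infty(Y)\hookrightarrow L^2(Y)$), hence weakly compact by Mazur and Eberlein--\v{S}mulian. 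Since $L^2(Y)$ is separable, the weak topology restricted to the bounded set $\mathscr{A}$ is metrizable, so $(\mathscr{A},\mathrm{weak})$ is a compact metrizable --- in particular Polish --- space. Write $S:=Y\times(0,T)$, equipped with the completed Lebesgue $\sigma$-algebra, and define $g:S\times\mathscr{A}\to\R$ by $g\big((y,t),(a,b)\big):=T_{\tilde F(\cdot,t),\tilde r(y,t)}(a,b)$.

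Second I would verify that $g$ is a Carath\'eodory function. For fixed $(y,t)$ it is weakly continuous in $(a,b)$: if $(a_n,b_n)\weakconv(a,b)$ in $\R\times L^2(Y)$ then $\int_Y b_n(z)\tilde f^{(j)}(z,t)\,dz\to\int_Y b(z)\tilde f^{(j)}(z,t)\,dz$ because $\tilde f^{(j)}(\cdot,t)\in L^2(Y)$, and continuity of $\sigma$ then gives $g\big((y,t),(a_n,b_n)\big)\to g\big((y,t),(a,b)\big)$; this is exactly the weak continuity already exploited in Proposition~\ref{proposition_6}. For fixed $(a,b)$ the map $(y,t)\mapsto g\big((y,t),(a,b)\big)$ is jointly measurable, since $t\mapsto\int_Y b(z)\tilde f^{(j)}(z,t)\,dz$ is measurable (as $\tilde F\in L^1((0,T);L^2(Y)^N)$) and $(y,t)\mapsto\tilde r^{(j)}(y,t)$ admits a jointly measurable representative (as $\tilde r\in L^1((0,T);L^2(Y)^N)$), so the finite sum of products is jointly measurable. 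By Proposition~\ref{proposition_6}\,(1) the minimum over $\mathscr{A}$ is attained for every $(y,t)$, hence the argmin correspondence has nonempty compact values.

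Third I would invoke the measurable maximum theorem applied to the constant compact-valued correspondence $(y,t)\mapsto\mathscr{A}$ and the integrand $-g$: the value $(y,t)\mapsto\min_{\mathscr{A}}g$ is measurable, the argmin correspondence is measurable, and it admits a measurable selector $(y,t)\mapsto\big(a^*(y,t),b^*(y,t)\big)\in\mathscr{A}$, measurable into $(\mathscr{A},\mathrm{weak})$. It then remains to upgrade this abstract selection to a genuinely jointly measurable function of $(y,z,t)$. The scalar $a^*$ is directly measurable. The map $(y,t)\mapsto b^*(y,t)=:b^*(y,\cdot,t)\in L^2(Y)$ is weakly measurable, hence strongly measurable by Pettis' theorem (using separability of $L^2(Y)$), and a strongly measurable $L^2(Y)$-valued map possesses a jointly measurable representative $(y,z,t)\mapsto b^*(y,z,t)$. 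Finally, membership $\big(a^*(y,t),b^*(y,\cdot,t)\big)\in\mathscr{A}$ yields the pointwise constraint $\big(a^*(y,t),b^*(y,z,t)\big)\in A$ for a.e.\ $z$, which establishes the claim.

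I expect the main obstacle to be precisely the infinite-dimensionality and non-separability of the $b$-component: the norm topology on $L^\infty(Y)$ provides neither compactness nor separability, so the selection cannot be performed there directly and must be carried out in the weak $L^2$ topology, where $\mathscr{A}$ becomes a compact Polish space and $g$ a continuous integrand. The price is that the selection theorem only delivers a weakly measurable selector, and the essential second step is to convert this back into a pointwise, jointly measurable kernel $b^*(y,z,t)$ via Pettis' theorem --- these two maneuvers are exactly what make the otherwise finite-dimensional intuition rigorous.
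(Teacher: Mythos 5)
Your argument is correct and takes essentially the same route as the paper: both rest on the observation that $\mathscr{A}$ is compact and metrizable in $\R\times\left(L^2(Y)\text{-weak}\right)$ and that $T_{F,r}$ is continuous there, followed by a measurable selection theorem (the paper invokes Proposition 7.33 of \cite{bertsekas1996stochastic} to obtain a Borel selector on the parameter space $L^2(Y)^N\times\R^N$ and composes it with $(\tilde F,\tilde r)$, while you apply the measurable maximum theorem fiberwise over $Y\times(0,T)$ with a Carath\'eodory integrand). Your closing step --- upgrading the weakly measurable $L^2(Y)$-valued selector to a jointly measurable kernel $b^*(y,z,t)$ via Pettis' theorem --- is left implicit in the paper and is a worthwhile clarification.
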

\begin{proof}
    We note that the map $(F, r, a, b) \rightarrow T_{F, r}(a, b)$ in $L^2(Y)^N \times \R^n \times \R \times \left( L^2(Y)-\mathrm{weak} \right)$ is continuous and start by invoking Proposition 7.33 in \cite{bertsekas1996stochastic}. Following the notation in that reference we set $X_0 = L^2(Y)^N \times \R^N$, $Y_0 = \R \times \left( L^2(Y)-\mathrm{weak} \right) \cap \mathscr{A}$. Note that $Y_0$ is compact and metrizable since the weak topology on bounded subsets of $L^2(Y)$ is metrizable. With $D = X_0 \times Y_0$ we conclude the existence of a Borel-measurable map $\varphi : X_0 \rightarrow Y_0$ such that
    \begin{align*}
        T_{F, r}\left( \varphi(F, r) \right) = \min_{(a, b) \in \mathscr{A}} T_{F, r}(a, b).
    \end{align*}
    We now define $(a^*, b^*) = \varphi \circ (\tilde{F}, \tilde{r})$ and the result follows since the composition of Borel-measurable maps is Borel-measurable.
\end{proof}

If $A = [a_m, a_M] \times [b_m, b_M]$ then
\begin{align*}
    \partial \mathscr{A} = &\{ (a, b) \in \mathscr{A} : a = a_m \; \mathrm{or} \; a = a_M \; \mathrm{and} \; b_m \le b(z) \le b_M \; \mathrm{a.e.} \; \mathrm{in} \; Y \} \\
    &\cup \{ (a, b) \in \mathscr{A} : a_m \le a \le a_M, b_m \le b(z) \le b_M \; \mathrm{a.e.} \; \mathrm{in} \; Y \; \mathrm{and} \; \operatorname*{ess\,inf}_{z \in Y} b(z) = b_M \; \mathrm{or} \; \operatorname*{ess\,sup}_{z \in Y} = b_M \}.
\end{align*}

\begin{remark}
    If $\sigma' > 0$ and $\{ f_j\}_{j=1, \dots, N}$ are linearly independent and if $r \neq 0$ then $T_{F, r}$ assumes its minimum at the following subset of $\partial \mathscr{A}$
    \begin{align*}
        \{ (a, b) \in \mathscr{A} : a_m \le a \le a_M, b_m \le b(z) \le b_M \; \mathrm{a.e.} \; \mathrm{in} \; Y \; \mathrm{and} \; \operatorname*{ess\,inf}_{z \in Y} b(z) = b_M \; \mathrm{or} \; \operatorname*{ess\,sup}_{z \in Y} = b_M \}.
    \end{align*}
    This follows by assuming that $D_b T_{F, r}=0$ on the other part of the boundary of $\mathscr{A}$, which gives $r=0$ because of the linear independence of the components of $F$.
\end{remark}
Thus, we conclude that a.e. in $t \in (0, T), y \in Y$
\begin{align}
    \label{eq:argmin}
    \left( \bar{a}(y, t), \bar{b}(y, \cdot, t) \right) \in \mathrm{argmin} \; T_{\bar{F}(\cdot, t), \bar{r}(y, t)}
\end{align}
such that for $j = 1, \dots, N$
\begin{align}
    \label{eq:min_principle}
    \begin{cases}
        \partial_t \bar{f^{(j)}} = \sigma \left( \bar{a} - B_{\bar{b}} \bar{f^{(j)}} \right), &\quad 0 < t \le T \\
        \partial_t \bar{r^{(j)}} = B_{\bar{b}^*} \left( \sigma' \left( \bar{a} - B_{\bar{b}} \bar{f^{(j)}} \right) \bar{r^{(j)}} \right) &\quad 0 < t \le T \\
        \bar{f^{(j)}}(t=0) = f_I^{(j)} \\
        \bar{r^{(j)}}(y, T) = \int_U \left( \bar{P^{(j)}_{\mathrm{pre}}}(u) - P^{(j)}(u) \right) h' \left( \bar{Z^{(j)}}(u) \right) w(u, y) du,
    \end{cases}
\end{align}
where $\bar{Z^{(j)}}(u) := \int_Y w(u, y) \bar{f^{(j)}}(y, T) dy + \mu(u)$ and $\bar{P^{(j)}_\mathrm{pre}}(u) = h \left( \bar{Z^{(j)}}(u) \right)$. Note that the forward and backward problems are all coupled through the optimal control $\left( \bar{a}, \bar{b} \right)$ which depend on $\bar{f^{(1)}}, \dots, \bar{f^{(N)}}, \bar{r^{(1)}}, \dots, \bar{r^{(N)}}$.

Consider now the regression problem $h(\xi) = \xi$ for all $\xi \in \R$. Then we conclude
\begin{proposition} \label{proposition_7}
    Let $\left( \bar{a}, \bar{b} \right) \in \mathscr{C}$ be an optimal control and let $\{ f^{(j)}_I \}_{j=1, \dots, N}$ be weakly linearly independent. Then either
    \begin{enumerate}
        \item $\left( \bar{a}, \bar{b} \right) \in \mathring{\mathscr{C}}$ and $f^{(j)}(\cdot, T)$ $j= 1, \dots, N$ are least-square solutions of "$Wf^{(j)}(\cdot, T) = P^{(j)} - \mu$" \\ or
        \item $\left( \bar{a}, \bar{b} \right) \in \partial \mathscr{C}$.
        \item \label{condition:small_time} For $t \in (0, \epsilon)$ a.e. with $\epsilon$ sufficiently small and $y \in Y$ a.e. we have $\left( \bar{a}(y, t), \bar{b}(y, \cdot, t) \right) \in \partial \mathscr{A}$.
    \end{enumerate}
    \begin{proof}
        (\ref{condition:small_time}) follows from the fact that weak linear independence of $\{ f^{(j)}_I \}_{j=1, \dots, N}$ implies weak linear independence of $\{ f^{(1)}(\cdot, t), \dots, f^{(N)}(\cdot, t) \}$ for $0 \le t \le \epsilon$ with $\epsilon$ sufficiently small and we use Proposition \ref{proposition_6}
    \end{proof}
\end{proposition}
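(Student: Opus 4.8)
The plan is to separate the interior/boundary alternative from the small-time statement \ref{condition:small_time}, since the first two options are merely the exhaustive dichotomy $(\bar a,\bar b)\in\mathring{\mathscr{C}}$ versus $(\bar a,\bar b)\in\partial\mathscr{C}$ and carry content only in the interior case.

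First I would treat the interior case. If $(\bar a,\bar b)\in\mathring{\mathscr{C}}$, then every perturbation in a small $L^\infty$-ball around $(\bar a,\bar b)$ remains admissible, so optimality forces the Gateaux derivatives to vanish, $D_aJ(\bar a,\bar b)=0$ and $D_bJ(\bar a,\bar b)=0$; these are exactly the stationary conditions \eqref{eq:stationary_a}, \eqref{eq:stationary_b}. I would then reproduce the argument already carried out below \eqref{eq:stationary_mu}: weak linear independence of $\{f_I^{(j)}\}$ propagates to $\{f^{(j)}(\cdot,t)\}$ on a short interval, and testing \eqref{eq:stationary_a}, \eqref{eq:stationary_b} against an arbitrary $\varphi\in L^2(Y)$ yields $\sigma'(\xi^{(j)})\,r^{(j)}=0$ a.e.\ there. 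Since $\sigma'>0$ this gives $r^{(j)}\equiv0$ on that interval, and linearity of the backward problem \eqref{eq:backward} propagates this to $r^{(j)}\equiv0$ on all of $(0,T)$, in particular $r^{(j)}(\cdot,T)=0$. For the regression task $h(\xi)=\xi$, the terminal identity \eqref{eq:identity_1} then collapses to the normal equations $W^*Wf^{(j)}(\cdot,T)=W^*(P^{(j)}-\mu)$ of Remark \ref{remark:least_square}, i.e.\ $f^{(j)}(\cdot,T)$ is a least-square solution of "$Wf^{(j)}(\cdot, T) = P^{(j)} - \mu$". This is the first alternative; its negation is simply $(\bar a,\bar b)\in\partial\mathscr{C}$, the second alternative.

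Next I would prove \ref{condition:small_time}. The essential input is the disintegrated Pontryagin minimum principle \eqref{eq:control_hamiltonian}, which through the Borel-measurable selection of Proposition \ref{prop:measurability} guarantees that a.e.\ in $(y,t)$ the pair $(\bar a(y,t),\bar b(y,\cdot,t))$ minimizes $T_{\bar F(\cdot,t),\bar r(y,t)}$ over $\mathscr{A}$. I would then invoke the stability of weak linear independence: since $\{f_I^{(j)}\}$ are weakly linearly independent and this property is stable under small perturbations (Proposition \ref{prop:char_wli}), there exists $\epsilon\in(0,T]$ such that $\{f^{(1)}(\cdot,t),\dots,f^{(N)}(\cdot,t)\}$ stay weakly linearly independent for every $t\in[0,\epsilon]$. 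Fixing such $t$ and a.e.\ $y$, Proposition \ref{proposition_6}(\ref{second_Statement}) applies with $F=\bar F(\cdot,t)$ and $r=\bar r(y,t)$: because $\sigma'>0$, the minimizer of $T_{F,r}$ on $\mathscr{A}$ must lie on $\partial\mathscr{A}$ unless $\bar r(y,t)=0$. Hence $(\bar a(y,t),\bar b(y,\cdot,t))\in\partial\mathscr{A}$ for a.e.\ $(y,t)\in Y\times(0,\epsilon)$, away from the set where the co-state vanishes.

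The hard part will not be any single computation but the passage from the global Hamiltonian minimization $\min_{\mathscr{C}}H$ to the pointwise-in-$y$ assertion; this is precisely what the interchange in \eqref{eq:control_hamiltonian} and the measurable selection of Proposition \ref{prop:measurability} are designed to secure, and one must check that the exceptional null set on which $\bar r(y,t)=0$ does not spoil the a.e.\ claim. That set is harmless, since there $T_{F,r}\equiv0$ and the selection may be taken on $\partial\mathscr{A}$ as well; the only genuine care needed is ensuring that $\epsilon$ depends only on the data and on $(\bar a,\bar b)$ through the quantities listed after Theorem \ref{theorem:well-posedness_forward}, so that the weak-independence window is uniform in $y$.
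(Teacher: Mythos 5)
Your proposal is correct and follows essentially the same route as the paper: the paper's own proof consists only of the one-line argument for item (3) — weak linear independence of $\{f^{(j)}_I\}$ propagates to $\{f^{(j)}(\cdot,t)\}$ on a short interval and Proposition \ref{proposition_6} is applied — while the content of alternative (1) is exactly the earlier critical-point analysis culminating in \eqref{eq:identity_1} and Remark \ref{remark:least_square}, which you reproduce. Your additional caveat about the exceptional set where $\bar{r}(y,t)=0$ (on which $T_{\bar F,\bar r}\equiv 0$ and the minimizer need not lie on $\partial\mathscr{A}$) is a point the paper silently glosses over, and is worth retaining.
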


Note that Pontryagin's minimum principle states only a necessary condition which minimizers have to satisfy \cite{pontryagin2018mathematical}. However, in many instances the minimum principle can be used to significantly constrain or even determine the set of potential argmins, without providing sufficient conditions for the existence of minimizers. The example below the following remark will serve as an interesting illustration of this fact.

\begin{remark}
    The Hamiltonian \eqref{eq:hamiltonian} is constant in time at the optimal state \cite{evans2005introduction, little1996pontryagin}, i.e.,
    \begin{align*}
        H \left( \bar{F}, \bar{r}, \bar{a}, \bar{b} \right) = \mathrm{const} \quad \mathrm{on} \; [0, T].
    \end{align*}
    Note that this is non-trivial since $\bar{a}, \bar{b}$ depend on $t$ and are possibly not (everywhere) differentiable.
\end{remark}

The backward-forward coupling and in particular the coupling between different components in \eqref{eq:argmin}, \eqref{eq:min_principle} results in a highly nonlinear initial-terminal value problem. To illustrate this let us consider the mathematically interesting but practically irrelevant case of one set of training data $(f_I, P)$, i.e., the case $N=1$ and $A = [a_m, a_M] \times [b_m, b_M]$. We refer to \cite{liu2020selection}, where this case was already studied but we restate the result here for the reader's convenience. 
Let $\sigma' > 0$ on $\R$, then the minimization problem \eqref{eq:argmin} becomes
\begin{align*}
    \left( \bar{a}(y, t), \bar{b}(y, \cdot, t) \right) \in \mathop{\mathrm{argmin}}_{ (a, b) \in \mathscr{A}} \bigg[ \sigma \left( a - \int_Y b(z) \bar{f}(z, t) dz \right) \bar{r}(y, t) \bigg].
\end{align*}
We find, since $\sigma$ is strictly increasing, that a.e. in $(0, T)$ and a.e. in $\{ y \in Y : \bar{r}(y, t) \neq 0 \}$:
\begin{align*}
    \bar{a}(y, t) &= a_m \mathbbm{1}_{\{ \bar{r}(\cdot, t) > 0 \}}(y) + a_M \mathbbm{1}_{\{ \bar{r}(\cdot, t) < 0 \}}(y) \\
    \bar{b}(y, z, t) &= b_m \mathbbm{1}_{ \{ \bar{r}(\cdot, t) \otimes \bar{f}(\cdot, t) < 0 \} }(y, z) + b_M \mathbbm{1}_{ \{ \bar{r}(\cdot, t) \otimes \bar{f}(\cdot, t) > 0 \} }(y, z).
\end{align*}
Clearly, $\bar{f} = \bar{f}^{(1)}$, $\bar{r} = \bar{r}^{(1)}$ solve \eqref{eq:min_principle}, which is fully defined by the optimal bang-bang control $(\bar{a}, \bar{b})$ if and only if for a.e. $t \in (0, T)$ the $d$-dimensional Lebesgue measure of $\{ y \in Y : \bar{r}(y, t) = 0 \}$ vanishes. Note that the selection of $b$ on $\{ y \in Y : \bar{f}(y, t)=0 \}$ is of no significance to \eqref{eq:min_principle}.

\begin{section}{Dynamic Programming Principle and Hamilton-Jacobi-Bellman equation} \label{section:HJB}
    The two most widely used methods in control theory are the Pontryagin Maximum Principle and the Dynamic Programming Principle \cite{evans2005introduction}. While the former only provides a necessary condition for optimality the latter also gives (in a sense) a 'sufficient' condition albeit at the expense of much greater complexity. In this section, we extend the latter alternative approach to the deep learning residual network control problem presented in Section \ref{section:controllability}, building upon \cite{liu2020selection}.

    To begin, we define the value functional. Let $t \in [0, T]$, $s \in [t, T]$ and consider the forward problem for $f^{(j)} = f^{(j)}(y, s; t)$ with general initial data $v_j \in L^2(Y)$ imposed at $s=t$ for all $j=1, \dots, N$, i.e.,
    \begin{align}
        \label{eq:forward_v}
        \begin{cases}
            \partial_s f^{(j)} = \sigma \left( a - B_b f^{(j)} \right), \quad &y \in Y, s \in (t, T] \\
            f^{(j)}(y, s=t; t) = v_j(y) \quad &y \in Y,
        \end{cases}
    \end{align}
    with $(a, b) \in \mathscr{C}$ (as in Section \ref{section:max_principle}). Let the observed label functions $P^{(1)}, \dots, P^{(N)} \in L^2(U)$ be fixed and consider a given nonlinear, continuous cost functional $\mathcal{C} = \mathcal{C}(z, p) : L^2(U) \times L^2(U) \rightarrow \R$ such that $\mathcal{C} \ge 0$ on $L^2(Y) \times L^2(Y)$ and $\mathcal{C}$ = 0 if and only if $z = p$. 
    As in Section \ref{section:max_principle}, let $w \in L^2(U \times Y)$, $\mu \in L^2(U)$ be fixed and write the network output function
    \begin{align*}
        Z^{(j)}(u; t) = \int_Y w(u, y) f^{(j)}(y, T; t) dy + \mu(u).
    \end{align*}
    We define the value functional $\mathcal{V} = \mathcal{V} \left( v_1, \dots, v_N, t \right) : L^2(Y)^N \times [0, T] \rightarrow \R$ as
    \begin{align}
        \label{eq:value_functional}
        \mathcal{V} \left( v_1, \dots, v_N, t \right) := \inf_{(a, b) \in \mathscr{C}} \frac{1}{N} \sum_{j=1}^N \mathcal{C} \left( Z^{(j)}(\cdot, t), P^{(j)} \right).
    \end{align}
    Note that $f^{(j)}(y, T; T) = v_j(y)$ for all $y \in Y$ and for all $(a, b) \in \mathscr{C}$ implies $Z^{(j)}(u; T) = \int_Y w(u, y) v_j(y) dy + \mu(u)$ and
    \begin{align}
    \label{eq:terminal_condition_v}
        \mathcal{V} \left( v_1, \dots, v_N, T \right) = \frac{1}{N} \sum_{j=1}^N \mathcal{C} \left( \int_Y w(\cdot, y) v_j(y) dy + \mu, P^{(j)} \right) =: g\left( v_1, \dots, v_N \right),
    \end{align}
    with $g \in C(L^2(Y)^N; \R).$
    
    In the remainder of this section, we will rely on the following definitions of Lipschitz continuity for $\mathcal{C}$ and $\mathcal{V}$, which are provided here for clarity of the exposition.
    \begin{definition}
        We say that $\mathcal{C}$ is locally Lipschitz continuous with respect to its first argument in $L^2(U)$ if for $p \in L^2(U)$ and all $R > 0$ there exists $K = K(R, p)$ such that 
            \begin{align*}
                | \mathcal{C}(z_1, p) - \mathcal{C}(z_2, p) | \le K(R, p) \| z_1 - z_2 \|_{L^2(U)} \quad \mathrm{whenever} \; \| z_1 \|_{L^2(U)}, \| z_2 \|_{L^2(U)} \le R.
            \end{align*}
    \end{definition}

    \begin{definition}
        We say that the value functional $\mathcal{V}$ is locally Lipschitz continuous with respect to its full argument $\left( v_1, \dots, v_N, t \right)$ if for every $R > 0$ there exists $K = K(R)$ such that 
        \begin{align*}
            \bigg{|} \mathcal{V} \left( v^1_1, \dots, v_N^1, t_1 \right) - \mathcal{V} \left( v_1^2, \dots, v_N^2, t_2 \right) \bigg{|} \le K(R) \left( \sum_{j=1}^N \| v^1_{j} - v^2_{j} \|_{L^2(Y)} + | t_1 - t_2 | \right),
        \end{align*}
        whenever $\| v_j^1 \|_{L^2(Y)}, \| v_j^2 \|_{L^2(Y)} \le R$ for all $j=1, \dots, N$ and $0 \le t_1, t_2 \le T$.
    \end{definition}

    The definitions of uniform continuity and global Lipschitz continuity are immediate.

    \begin{proposition} \label{prop:lipschitz}
            \begin{enumerate}[label=(\roman*)]
            \item \label{i} If $\mathcal{C}$ is locally Lipschitz continuous with respect to its first argument in $L^2(U)$ then $\mathcal{V}$ is locally Lipschitz continuous with respect to its full argument in $L^2(Y)^N \times [0, T]$. 
            \item If $\mathcal{C}$ is globally Lipschitz continuous with respect to its first argument in $L^2(U)$ then $\mathcal{V}$ is globally Lipschitz continuous on $L^2(Y)^N$ uniformly for $t \in  [0,T]$ and locally Lipschitz continuous on bounded subsets of $L^2(Y)°^N \times [0, T]$.
            \item If $\mathcal{C}$ is uniformly Lipschitz continuous with respect to its first argument and if the activation function $\sigma$ is bounded on $\R$ then $\mathcal{V}$ is uniformly Lipschitz continuous.
            \item If $\mathcal{C}$ is uniformly continuous with respect to its first argument in $L^2(U)$ then $\mathcal{V}$ is uniformly continuous on $L^2(Y)^N$ uniformly for $t \in [0,T]$ and locally uniformly continuous on bounded subsets of $L^2(Y)^N \times [0, T]$.
        \end{enumerate}

        \begin{proof}
            We will prove \ref{i} and briefly comment on the other cases. Denote as in \eqref{eq:value_functional}
            \begin{align*}
                \mathcal{V} \left( v_1, \dots, v_N, t \right) = \inf_{(a, b) \in \mathscr{C}} G_{\left( v_1, \dots, v_N, t \right)} (a, b) := \inf_{(a, b) \in \mathscr{C}} \frac{1}{N} \sum_{j=1}^N \mathcal{C} \left( Z^{(j)}(\cdot, t), P^{(j)} \right).
            \end{align*}
            We start by analyzing the Lipschitz continuity of the map $\left( v_1, \dots, v_N, t \right) \rightarrow G_{\left( v_1, \dots, v_N, t \right)} (a, b)$. For all $j=1, \dots, N$, let $f^{(j)}_1, f^{(j)}_2$ satisfy \eqref{eq:forward_v} with initial conditions $v_j^1, v_j^2$, respectively, imposed at $s=t_1$ and $s=t_2$ respectively, and for a given pair $(a, b) \in \mathscr{C}$. 
            We estimate using \eqref{eq:estimate_for_f}, for $l=1,2$:
            \begin{align*}
                \| Z_l^{(j)}(\cdot \, ; t_l) \|_{L^2(U)} &\le \| w \|_{L^2(U\times Y)} \| f_l^{(j)} (\cdot, T; t_l) \|_{L^2(Y)} + \| \mu \|_{L^2(U)} \\
                &\le C_1 \| v^l_{j} \|_{L^2(Y)} + C_2,
            \end{align*}
            where $C_1, C_2$ are independent of $(a, b) \in \mathscr{C}$ and of $v_l^{(j)}$. Now, let $\| v^l_{j} \|_{L^2(Y)} \le R$ for $j=1, \dots, N$, $l=1,2$. Using the local Lipschitz continuity of $\mathcal{C}$ and denoting $\tilde{R}:= C_1 R + C_2$ we obtain
            \begin{align*}
                \bigg{|} \mathcal{C} \left(Z^{(j)}_1 (\cdot, t_1), P^{(j)} \right) - \mathcal{C} \left(Z^{(j)}_2 (\cdot, t_2), P^{(j)} \right) \bigg{|} &\le K(\tilde{R}, P^{(j)}) \| Z^{(j)}_1 (\cdot, t_1) - Z^{(j)}_2 (\cdot, t_2) \|_{L^2(U)} \\
                &\le K(\tilde{R}, P^{(j)}) \| w \|_{L^2(U \times Y)} \left( I_{t_2} + I_{t_1, t_2} \right),
            \end{align*}
            where
            \begin{align*}
                I_{t_2} &:= \| f_1^{(j)}(\cdot, T; t_2) - f_2^{(j)}(\cdot, T; t_2) \|_{L^2(Y)}, \\
                I_{t_1, t_2} &:= \| f_1^{(j)}(\cdot, T; t_1) - f_1^{(j)}(\cdot, T; t_2) \|_{L^2(Y)}.
            \end{align*}
            Using estimate \eqref{eq:estimate_1} and the uniform boundedness of $a, b$ we obtain the following bound
            \begin{align*}
                I_{t_2} \le K_1 \| v^1_{j} - v^2_{j} \|_{L^2(Y)},
            \end{align*}
            where $K_1$ is a constant independent of $v^1_{j}, v^2_{j}, a, b$. Similarly, employing \eqref{eq:uniform_time_estimate} and again the uniform boundedness of $a, b$ we get
            \begin{align*}
                I_{t_1, t_2} \le K_2(R) | t_1 - t_2 |,
            \end{align*}
            where $K_2$ is independent of $a, b$. Note that $K_2$ can be chosen independent of $R$ if $\sigma$ is bounded on $\R$. Thus we proved local Lipschitz continuity of the map $\left( v_1, \dots, v_N, t \right) \rightarrow G_{\left( v_1, \dots, v_N, t \right)} (a, b)$. Finally, to prove the Lipschitz continuity of $\mathcal{V}$ we recall that infima of $L$-Lipschitz maps are $L$-Lipschitz. The statements on uniform Lipschitz continuity and on uniform continuity follow analogously.
        \end{proof}
    \end{proposition}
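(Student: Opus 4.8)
The plan is to reduce the whole statement to a single control-uniform (local) Lipschitz estimate and then invoke the elementary fact that an infimum of a family of functions sharing one common Lipschitz constant is Lipschitz with that same constant. Writing $G_{(v_1,\dots,v_N,t)}(a,b) := \frac{1}{N}\sum_{j=1}^N \mathcal{C}(Z^{(j)}(\cdot,t),P^{(j)})$, so that $\mathcal{V}=\inf_{(a,b)\in\mathscr{C}} G_{(v_1,\dots,v_N,t)}(a,b)$ by \eqref{eq:value_functional}, the task becomes to show that $(v_1,\dots,v_N,t)\mapsto G_{(v_1,\dots,v_N,t)}(a,b)$ is Lipschitz with a constant independent of $(a,b)\in\mathscr{C}$. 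First I would record the a priori bound $\|Z^{(j)}(\cdot,t)\|_{L^2(U)} \le \|w\|_{L^2(U\times Y)}\|f^{(j)}(\cdot,T;t)\|_{L^2(Y)} + \|\mu\|_{L^2(U)} \le C_1\|v_j\|_{L^2(Y)} + C_2$, where $C_1,C_2$ are independent of $(a,b)$; this follows from the kernel bound for $w$ and the growth estimate \eqref{eq:estimate_for_f}, using that the $L^\infty$-bounds defining $\mathscr{C}$ make $\|a\|_{L^1((0,T);L^2(Y))}$ and $\|b\|_{L^1((0,T);L^2(Y\times Y))}$ uniformly bounded. Restricting to $\|v_j^l\|_{L^2(Y)}\le R$ then confines every $Z^{(j)}$ to the ball of radius $\tilde R := C_1 R + C_2$, fixing the radius at which the local Lipschitz constant $K(\tilde R,P^{(j)})$ of $\mathcal{C}$ is applied.

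The heart of the argument is the difference bound $\|Z_1^{(j)}(\cdot,t_1)-Z_2^{(j)}(\cdot,t_2)\|_{L^2(U)} \le \|w\|_{L^2(U\times Y)}\|f_1^{(j)}(\cdot,T;t_1)-f_2^{(j)}(\cdot,T;t_2)\|_{L^2(Y)}$, which I would split by the triangle inequality into a pure initial-data term $I_{t_2} := \|f_1^{(j)}(\cdot,T;t_2)-f_2^{(j)}(\cdot,T;t_2)\|_{L^2(Y)}$ and a pure initial-time term $I_{t_1,t_2} := \|f_1^{(j)}(\cdot,T;t_1)-f_1^{(j)}(\cdot,T;t_2)\|_{L^2(Y)}$. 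Because the two solutions in $I_{t_2}$ carry the same control $(a,b)$, the solution-difference estimate \eqref{eq:estimate_1} collapses (all $a$- and $b$-difference contributions vanish) to $I_{t_2}\le K_1\|v_j^1-v_j^2\|_{L^2(Y)}$, with $K_1$ of the form $\exp(L\|b\|)$ and hence uniform over $\mathscr{C}$. For $I_{t_1,t_2}$ (say $t_1\le t_2$) I would first measure the drift of the solution started from $v_j^1$ at $t_1$ over the slab $[t_1,t_2]$ by the uniform time-continuity estimate \eqref{eq:uniform_time_estimate}, obtaining $\|f_1^{(j)}(\cdot,t_2;t_1)-v_j^1\|_{L^2(Y)}\le |\sigma(0)|\,|t_1-t_2| + L(\cdots)$, and then propagate this data mismatch at $t_2$ forward to $T$ by a second application of \eqref{eq:estimate_1}, yielding $I_{t_1,t_2}\le K_2(R)\,|t_1-t_2|$ with $K_2$ again independent of $(a,b)$.

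Combining the two bounds shows $G_{(v_1,\dots,v_N,t)}(a,b)$ is Lipschitz on $\{\|v_j\|_{L^2(Y)}\le R\}\times[0,T]$ with a control-independent constant, and passing to the infimum over $\mathscr{C}$ gives \ref{i}. The remaining cases differ only in the provenance of the constant: a globally Lipschitz $\mathcal{C}$ turns $K(\tilde R,P^{(j)})$ into a fixed constant, giving global Lipschitz dependence on the $v_j$ uniformly in $t$ while $K_2(R)$ still carries the state-dependence coming from \eqref{eq:uniform_time_estimate}, so full Lipschitz continuity in $(v_1,\dots,v_N,t)$ holds only on bounded subsets — this is (ii); additionally assuming $\sigma$ bounded removes the $R$-dependence of $K_2$, because the $|\sigma(0)|$ and $\sigma$-growth terms in \eqref{eq:uniform_time_estimate} no longer scale with the state norm, giving uniform Lipschitz continuity — this is (iii); and replacing each Lipschitz inequality by the modulus of continuity of $\mathcal{C}$ yields (iv), via the fact that an infimum of functions with a common modulus of continuity inherits that modulus.

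The step I expect to be the main obstacle is the initial-time term $I_{t_1,t_2}$: it is not a single use of \eqref{eq:estimate_1} but the two-stage estimate (drift over $[t_1,t_2]$ via \eqref{eq:uniform_time_estimate}, then forward transport via \eqref{eq:estimate_1}), and it is exactly here that the factor $\|f\|_{C([t_1,t_2];L^2(Y))}$ enters and produces the $R$-dependence of $K_2$ that only the boundedness of $\sigma$ in (iii) removes. Throughout, the one point requiring care is to keep every constant manifestly independent of $(a,b)\in\mathscr{C}$, so that the final passage to the infimum is legitimate.
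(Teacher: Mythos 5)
Your proposal is correct and follows essentially the same route as the paper's proof: the same reduction to a control-uniform Lipschitz bound for $G_{(v_1,\dots,v_N,t)}(a,b)$, the same a priori bound on $Z^{(j)}$, the same splitting into the terms $I_{t_2}$ and $I_{t_1,t_2}$ estimated via \eqref{eq:estimate_1} and \eqref{eq:uniform_time_estimate}, and the same passage to the infimum. Your two-stage treatment of $I_{t_1,t_2}$ (drift over $[t_1,t_2]$, then forward transport) merely spells out what the paper leaves implicit.
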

    The proof of \ref{i} can easily be modified to show that the (assumed) continuity of $\mathcal{C}$ implies equicontinuity in the controls $(a,b) \in \mathscr{C}$ of the map $(v_1,..,v_N, t) \rightarrow G_{(v_1,..v_N,t)} (a,b)$. This proves continuity of $g \in L^2(Y)^N$.
    
    It is important to check the Lipschitz continuity assumption of the Proposition for the last-layer activation with the loss functions used in ML. For the regression task \eqref{eq:predicted_outcome} with $h(\xi) = \xi$ on $\R$ and the $L^2$-loss \eqref{eq:loss}, local Lipschitz continuity of $\mathcal{C}=\mathcal{C}(z,p)$ in $z \in L^2(U)$ is satisfied. Global Lipschitz continuity holds if the $L^2$-loss function is replaced by the $L^1$-loss, i.e., the mean absolute error.

    For the multi-label classification \eqref{eq:predicted_outcome}, \eqref{eq:loss} global Lipschitz continuity holds (since $h = h{(\xi})$ is bounded on $\R$ and it is globally Lipschitz continuous). In the case of the soft-max final layer activation \eqref{eq:soft_max} and the MSE or $L^1$-loss we have local Lipschitz continuity of $\mathcal{C} = \mathcal{C}(z, p)$ in $z \in L^2(U)$ if $\mu \in L^{\infty}(U, du)$, $w \in L^2(Y; L^{\infty}(U, du))$. The same holds for the cross entropy loss with soft-max activation in \eqref{eq:loss_log}.

    For the following, we recall that a Borel set $A$ in a separable Banach Space $X$ is said to be Gauss null if $\mu(A)=0$ for every non-degenerate Gaussian measure $\mu$ on $X$ (not supported on a proper closed hyperplane), see \cite{lindenstrauss2003frechet}. We refer to \cite{kuo2006gaussian, lunardi2015infinite} for the definition and construction of Gaussian measures on Banach spaces. 

    We now prove:
    \begin{theorem}
        Let $\mathcal{C} = \mathcal{C}(z, p)$ be locally Lipschitz continuous with respect to its first argument in $L^2(U)$. Then:
        \begin{enumerate}
            \item \label{differentiability_v} The value functional $\mathcal{V} : L^2(Y)^N \times [0, T] \rightarrow \R$ is Gateaux/Hadamard differentiable except on a Gauss null subset of $L^2(Y)^N \times [0, T]$.
            \item Let $\left( \tilde{v}_1, \dots, \tilde{v}_N, \tilde{t} \right)$ be a point of Gateaux/Hadamard differentiability of $\mathcal{V}$. Then, its Gateaux/Hadamard derivative $\left( D_{(v_1, \dots, v_N)} \mathcal{V}, D_t \mathcal{V} \right) \in L^2(Y)^N \times \R$ satisfies the functional Hamilton-Jacobi-Bellman (HJB) equation
            \begin{align}
                \label{eq:HJB}
                D_t \mathcal{V} + H_{\mathrm{HJB}} \left( v_1, \dots, v_N, D_{\left( v_1, \dots, v_N \right)} \mathcal{V} \right) = 0
            \end{align}
            at $\left( \tilde{v}_1, \dots, \tilde{v}_N, \tilde{t} \right)$ where the Hamiltonian $H_{\mathrm{HJB}}$ is given by
            \begin{align}
            \label{eq:H_HJB}
                H_{\mathrm{HJB}} \left( v_1, \dots, v_N, r_1, \dots, r_N \right) := \inf_{(a,b) \in \mathscr{A}_{\mathrm{HJB}}} \sum_{j=1}^N \int_Y \sigma \left( a(y) - B_{b(y, \cdot)} v_j \right) r_j(y) dy.
            \end{align}
        \end{enumerate}
    \end{theorem}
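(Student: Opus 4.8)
I would treat the two statements by separate machinery. Statement~\ref{differentiability_v} I prove by a classical infinite-dimensional differentiability theorem: the domain sits inside the separable Hilbert space $X := L^2(Y)^N \times \mathbb{R}$ and, by Proposition~\ref{prop:lipschitz}, $\mathcal{V}$ is locally Lipschitz continuous there (the slices $\{t=0\}$ and $\{t=T\}$ lie in closed affine hyperplanes and are already Gauss null, so may be discarded). The conclusion then follows from the theorem of Aronszajn--Christensen--Mankiewicz (see \cite{lindenstrauss2003frechet}): a locally Lipschitz real-valued function on a separable Banach space is Gateaux differentiable outside a Gauss-null set. Since $\mathcal{V}$ is locally Lipschitz, Gateaux differentiability at a point automatically upgrades to Hadamard differentiability there, which completes statement~\ref{differentiability_v}.

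For the HJB equation the first step is to establish the dynamic programming principle: for $\tilde{t} \in [0,T)$ and $0 < h \le T - \tilde{t}$,
\begin{align*}
  \mathcal{V}(v_1, \dots, v_N, \tilde{t}) = \inf_{(a,b) \in \mathscr{C}} \mathcal{V}\big( f^{(1)}(\cdot, \tilde{t}+h; \tilde{t}), \dots, f^{(N)}(\cdot, \tilde{t}+h; \tilde{t}), \tilde{t}+h \big),
\end{align*}
where $f^{(j)}(\cdot, \tilde{t}+h; \tilde{t})$ is the state generated by \eqref{eq:forward_v} from $v_j$ under $(a,b)$. This is proved in the usual way, using that the concatenation of admissible controls on $[\tilde{t}, \tilde{t}+h]$ and on $[\tilde{t}+h, T]$ is again admissible (as $\mathscr{C}$ is defined by pointwise constraints) and that the flow of \eqref{eq:forward_v} composes by the uniqueness in Theorem~\ref{theorem:well-posedness_forward}, so that the terminal cost splits over the two subintervals.

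Evaluating the principle at a point $(\tilde{v}_1, \dots, \tilde{v}_N, \tilde{t})$ of Hadamard differentiability then yields both inequalities. For $D_t \mathcal{V} + H_{\mathrm{HJB}} \ge 0$ I insert an arbitrary time-independent control $(a,b) \in \mathscr{A}_{\mathrm{HJB}}$; since $f^{(j)}(\cdot, \tilde{t}+h; \tilde{t}) = \tilde{v}_j + h\,\sigma(a - B_b \tilde{v}_j) + o(h)$ in $L^2(Y)$ by \eqref{eq:forward_v}, expanding $\mathcal{V}$ by Hadamard differentiability, dividing by $h$ and letting $h \to 0^+$ gives $\sum_{j} \langle D_{v_j}\mathcal{V}, \sigma(a - B_b \tilde{v}_j)\rangle + D_t\mathcal{V} \ge 0$, and taking the infimum over $\mathscr{A}_{\mathrm{HJB}}$ produces the inequality. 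For the reverse inequality I choose, for each small $h$, an $o(h)$-optimal control $(a_h, b_h) \in \mathscr{C}$ in the principle, write $f^{(j)}(\cdot, \tilde{t}+h) - \tilde{v}_j = \int_{\tilde{t}}^{\tilde{t}+h} \sigma(a_h - B_{b_h} f^{(j)})\,ds$, interchange the $y$- and $s$-integrals by Fubini, and observe that for a.e. $s$ the slice $(a_h(\cdot,s), b_h(\cdot,\cdot,s))$ lies in $\mathscr{A}_{\mathrm{HJB}}$, so the integrand is bounded below by $H_{\mathrm{HJB}}(\tilde{v}, D_v\mathcal{V})$; the error from replacing $f^{(j)}(\cdot,s)$ by $\tilde{v}_j$ is $O(h)$ uniformly on $[\tilde{t}, \tilde{t}+h]$ by \eqref{eq:uniform_time_estimate} and the uniform bounds defining $\mathscr{C}$, hence $o(h)$ after integration, giving $D_t\mathcal{V} + H_{\mathrm{HJB}} \le 0$.

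I expect the main obstacle to be the reverse (sub-solution) inequality, where the infimum defining $H_{\mathrm{HJB}}$ must be passed through the time-average of a genuinely time-dependent near-optimal control: this hinges on the Fubini interchange, on the uniform-in-$s$ replacement of $f^{(j)}(\cdot,s)$ by $\tilde{v}_j$ controlled via \eqref{eq:uniform_time_estimate}, and on the fact that the control slice remains in $\mathscr{A}_{\mathrm{HJB}}$ for a.e. $s$. A secondary technical point is the clean justification of the dynamic programming principle itself, where admissibility of concatenated controls and the composition property of the flow \eqref{eq:forward_v} must be invoked.
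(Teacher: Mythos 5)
Your proof is correct and follows essentially the same route as the paper: part (1) is obtained, exactly as in the paper, from the local Lipschitz continuity of $\mathcal{V}$ supplied by Proposition \ref{prop:lipschitz} together with the infinite-dimensional Rademacher theorem of \cite{lindenstrauss2003frechet} and \cite{aronszajn1976} (including the observation that Gateaux differentiability of a locally Lipschitz map upgrades to Hadamard differentiability). For part (2) the paper simply invokes ``standard control theory arguments'' via \cite[Theorem 5.1]{evans2005introduction}; your dynamic programming principle and the two limiting inequalities (constant control for the supersolution direction, $o(h)$-optimal controls plus the Fubini interchange and the replacement of $f^{(j)}(\cdot,s)$ by $\tilde{v}_j$ controlled by \eqref{eq:uniform_time_estimate} for the subsolution direction) are precisely the content of that citation, written out in the present setting.
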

    \begin{remark}
        More explicitly, the Theorem says that if $\mathcal{C}$ is locally Lipschitz continuous with respect to its first argument in $L^2(U)$ then at each point $\left( \tilde{v}_1, \dots, \tilde{v}_N, \tilde{t} \right)$ outside a Gauss null subset of $L^2(Y)^N \times [0, T]$, the HJB equation \eqref{eq:HJB} holds. 
    \end{remark}
    \begin{proof}
        Since $\mathcal{C}$ is locally Lipschitz with respect to its first argument in $L^2(U)$, applying Proposition \ref{prop:lipschitz}, it follows that the value functional $\mathcal{V}$ is locally Lipschitz continuous with respect to its full argument. Consequently, $\mathcal{V}$ is Gateaux/Hadamard differentiable except on a Gauss null set due to an infinite-dimensional version of Rademacher's theorem, see \cite[Theorem 1.1]{lindenstrauss2003frechet} or \cite[Section 2, Theorem 1]{aronszajn1976}. This proves (\ref{differentiability_v}).

        Moreover, at each point where $\mathcal{V}$ is Gateaux/Hadamard differentiable, we can apply the chain rule, and \eqref{eq:HJB} follows directly by standard control theory arguments, see \cite[Theorem 5.1]{evans2005introduction}.
    \end{proof}
    
    The functional HJB equation \eqref{eq:HJB}, together with \eqref{eq:terminal_condition_v} constitute a terminal value problem posed on $L^2(Y)^N \times [0, T]$. Note that the Hamiltonian $H_{\mathrm{HJB}} = H_{\mathrm{HJB}} (v, r)$ is a concave functional of the 'gradient variable' $r = (r_1, \dots, r_N) \in L^2(Y)^N$ for every $v = (v_1, \dots, v_N) \in L^2(Y)^N$ since it is defined as the pointwise infimum of concave functionals. After the time reversal $\tau \rightarrow T-t$ the problem \eqref{eq:HJB}, \eqref{eq:terminal_condition_v} becomes an IVP with a Hamiltonian which is convex in the 'gradient variable'. Note that the Hamiltonian can be written as
    \begin{align*}
        H_{\mathrm{HJB}}(v, r) = \int_Y \min_{ \left( a, b \right) \in \mathscr{A} }  T_{v, r(y)} \left( a, b \right) dy,
    \end{align*}
    where $\mathscr{A}$, $T_{v, r(y)}$ are defined in Section \ref{section:max_principle}. The infimum is actually a minimum according to Proposition \ref{proposition_6} and a straightforward variant of Proposition \ref{prop:measurability}.
    
    We remark that the map $\mathcal{Q} : L^2(Y)^2 \rightarrow \R$ defined by $\mathcal{Q}(v, r) := \int_Y \sigma \left( a(y) - (B_{b(y, \cdot)} v)(y) \right) r(y) dy$ is locally Lipschitz continuous on $L^2(Y)^2$ uniformly in $(a, b) \in \mathscr{A}$. In fact we have, for $(a, b) \in \mathscr{A}$
    \begin{align}
    \label{eq:inequality_Q}
        | Q(v_1, r_1) - Q(v_2, r_2) | &\le \| \sigma \left( a - B_b v_1 \right) \|_{L^2(Y)} \| r_1 - r_2 \|_{L^2(Y)} + \| \sigma' \|_{L^{\infty}(\R)} \| b \|_{L^2(Y \times Y)} \| r_2 \|_{L^2(Y)} \| v_1 - v_2 \|_{L^2(Y)} \notag \\
        &\le K \left( \left( 1 + \| v_1 \|_{L^2(Y)} \right) \| r_1 - r_2 \|_{L^2(Y)} + \| r_2 \|_{L^2(Y)} \| v_1 - v_2 \|_{L^2(Y)} \right),
    \end{align}
    where $K$ is independent of $(v_1, r_1), (v_2, r_2)$. Furthermore, since $T_{v, r} (a, b) = \sum_{j=1}^N Q(v_j, r_j)$, as an infimum of a family of uniformly local Lipschitz continuous functionals $H_{\mathrm{HJB}}$ is obviously locally Lipschitz continuous on $L^2 (Y)^{2N}$.
    
    To give an example we return to the case $N=1$ and $A = [a_m, a_M] \times [b_m, b_M]$ discussed at the end of Section \ref{section:max_principle}. The Hamiltonian for the HJB can easily be computed and we find
    \begin{align*}
        H_{\mathrm{HJB}}(v, r) = &\sigma \left( a_m - b_m \int_{f(z)<0} f(z) dz - b_M \int_{f(z)>0} f(z) dz \right) \int_{r(y)>0}  r(y) dy \\
        &+ \sigma \left( a_M - b_m \int_{f(z)>0} f(z) dz - b_M \int_{f(z)<0} f(z) dz \right) \int_{r(y)<0}  r(y) dy,
    \end{align*}
    showing the high degree of nonlinearity in the HJB equation. Note that $\sigma' \ge 0$ suffices for this computation of the Hamiltonian, strict monotonicity of $\sigma$ is not required.

    It is well known in mathematical control theory that under various sets of assumptions the value functional is the unique viscosity solution of the terminal value problem for the HJB equation. To proceed in this direction we start with the definition of viscosity solution, following Definition 1.1 in \cite{crandall1986hamilton}.
    \begin{definition}
        $\mathcal{V} \in C \left( L^2(Y)^N \times [0, T] ; \R \right)$ is a viscosity solution of
        \begin{align*}
            \begin{cases}
                \partial_t \mathcal{V} + H_{\mathrm{HJB}} (v, D \mathcal{V}) = 0 \\
                v(t=T) = g
            \end{cases}
        \end{align*}
        if and only if:
        \begin{enumerate}[label=(\roman*)]
            \item $\mathcal{V}(v, t=T) = g(v)$ for all $v \in L^2(Y)^N$,
            \item whenever $\varphi \in C(L^2(Y)^N \times (0, T) ; \R)$, $v_0 \in L^2(Y)^N$, $t_0 \in (0, T)$, $\varphi$ is (Fréchet) differentiable at $(v_0, t_0)$ and $\mathcal{V}-\varphi$ has a local maximum at $(v_0, t_0)$ then
            \begin{align*}
                \partial_t \varphi (v_0, t_0) + H_{\mathrm{HJB}} \left( v_0, D \varphi(v_0, t_0) \right) \ge 0,
            \end{align*}
            and whenever $\varphi \in C(L^2(Y)^N \times (0, T) ; \R)$, $v_0 \in L^2(Y)^N$, $t_0 \in (0, T)$, $\varphi$ is (Fréchet) differentiable at $(v_0, t_0)$ and $\mathcal{V}-\varphi$ has a local minimum at $(v_0, t_0)$ then
            \begin{align*}
                \partial_t \varphi (v_0, t_0) + H_{\mathrm{HJB}} \left( v_0, D \varphi(v_0, t_0) \right) \le 0.
            \end{align*}
        \end{enumerate}
    \end{definition}
    Note that the signs in the definitions of viscosity sub- and super solutions of initial value problems are reversed here due to the fact that we are dealing with a terminal value problem. For simplicity's sake, we have dropped the subscript '$v$' to denote derivatives of functionals with respect to the vector-valued function $v$.

    We now denote by $UC (L^2(Y)^N; \R)$ the space of uniformly continuous real-valued functionals on $L^2(Y)^N$, by $BUC(L^2(Y)^N; \R)$ its subspace of bounded functionals and by $UC_s (L^2(Y)^N \times [0, T]; \R)$ the space of those functionals $F : L^2(Y)^N \times [0, T] \rightarrow \R$ which are uniformly continuous in their first argument $v \in L^2(Y)^N$ uniformly for $t \in [0, T]$ and uniformly continuous on bounded subsets of $L^2(Y)^N \times [0, T]$, i.e., there is a global modulus of continuity $m_0$ and a local one $m_1$ such that
    \begin{align*}
        | F(v_1, t) - F(v_2, t) | \le m_0 \left( \| v_1 - v_2 \|_{L^2(Y)^N} \right) + m_1 \left( \| v_2 \|_{L^2(Y)^N}, |t - s| \right), \quad \forall v_1, v_2 \in L^2(Y)^N, \forall t, s \in [0, T].
    \end{align*}
    $BUC_s (L^2(Y)^N \times [0, T]; \R)$ is defined in analogy.
    We now prove:
    \begin{theorem}
        The value functional $\mathcal{V}$ defined in \eqref{eq:value_functional} is a viscosity solution of the terminal value problem for the HJB-equation \eqref{eq:HJB}, \eqref{eq:terminal_condition_v} on $L^2(Y)^N \times [0, T]$. Moreover, if $\mathcal{C}$ is uniformly continuous with respect to its first argument in $L^2(Y)^N$ then:
        \begin{enumerate}
            \item \label{case1} if the activation function $\sigma$ is bounded on $\R$, $\mathcal{V}$ is the unique viscosity solution in $UC_s (L^2(Y)^N \times [0, T]; \R)$,
            \item \label{case2} if $\mathcal{C}$ is bounded on $L^2(Y)^N$ with respect to its first argument, then $\mathcal{V}$ is the unique viscosity solution in $BUC_s (L^2(Y)^N \times [0, T]; \R)$.
        \end{enumerate}
    \end{theorem}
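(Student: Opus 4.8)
The plan is to separate the statement into an existence part (that $\mathcal{V}$ is a viscosity solution) and a uniqueness part (that it is the only one in the stated classes), and to treat the two by independent arguments. The backbone of existence is the Dynamic Programming Principle (DPP). Since the present problem is of Mayer type (no running cost), I would first prove that for every $v=(v_1,\dots,v_N)\in L^2(Y)^N$ and every $t\le s\le T$,
\begin{align*}
    \mathcal{V}(v,t)=\inf_{(a,b)\in\mathscr{C}}\mathcal{V}\big(f^{(1)}(\cdot,s;t),\dots,f^{(N)}(\cdot,s;t),\,s\big),
\end{align*}
where $f^{(j)}(\cdot,s;t)$ is the state of \eqref{eq:forward_v} driven by $(a,b)$ from the datum $v_j$ imposed at time $t$. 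This follows from the flow (semigroup) property of the forward IVP, well posed by Theorem \ref{theorem:well-posedness_forward}, together with concatenation of admissible controls on $[t,s]$ and $[s,T]$; standard manipulations of the infimum give both inequalities. The terminal condition $\mathcal{V}(\cdot,T)=g$ is \eqref{eq:terminal_condition_v}, and the continuity of $\mathcal{V}$ required by the definition is supplied by Proposition \ref{prop:lipschitz}.

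With the DPP in hand, the sub/supersolution inequalities follow from the classical test-function argument (cf. \cite[Theorem 5.1]{evans2005introduction}). For the subsolution inequality, let $\varphi$ be differentiable at $(v_0,t_0)$ with $\mathcal{V}-\varphi$ attaining a local maximum there; freezing an arbitrary constant-in-$s$ control $(a,b)\in\mathscr{A}_{\mathrm{HJB}}$ on $[t_0,t_0+h]$ in the DPP, using $\mathcal{V}\le\varphi$ near the contact point, dividing by $h$ and letting $h\to0^+$ (the state is right-differentiable in $s$ at $s=t_0$ with velocity $\sigma(ae-B_bv_0)$) gives $\partial_t\varphi+\sum_{j}\int_Y\sigma(a-B_bv_{0,j})\,D_{v_j}\varphi\,dy\ge0$; taking the infimum over $(a,b)$ produces $\partial_t\varphi+H_{\mathrm{HJB}}(v_0,D_v\varphi)\ge0$. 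For the supersolution inequality I would choose, for each $h$, a $\delta h$-optimal control, integrate the chain rule for $\varphi$ along the trajectory, bound the integrand from below by $\partial_t\varphi+H_{\mathrm{HJB}}$ evaluated at the intermediate states, and pass to the limit using continuity of the flow and the local Lipschitz continuity of $H_{\mathrm{HJB}}$ coming from \eqref{eq:inequality_Q}.

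For uniqueness I would invoke the infinite-dimensional comparison theory of Crandall and Lions \cite{crandall1986hamilton}. Its comparison principle requires structural estimates on $H_{\mathrm{HJB}}(v,r)$, namely uniform continuity in the gradient variable $r$ and a modulus in $v$ of the form $|H_{\mathrm{HJB}}(v,r)-H_{\mathrm{HJB}}(w,r)|\le\omega(\|v-w\|(1+\|r\|))$. Both are furnished by \eqref{eq:inequality_Q}: since $H_{\mathrm{HJB}}$ is an infimum over $(a,b)\in\mathscr{A}$ of sums of the functionals $\mathcal{Q}$, it inherits $|H_{\mathrm{HJB}}(v,r)-H_{\mathrm{HJB}}(w,r)|\le K\|r\|\,\|v-w\|$ and $|H_{\mathrm{HJB}}(v,r)-H_{\mathrm{HJB}}(v,s)|\le K(1+\|v\|)\|r-s\|$. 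In case \ref{case1}, boundedness of $\sigma$ improves the $r$-modulus to one uniform in $v$ (the factor $\|\sigma(a-B_bv)\|_{L^2(Y)}$ in \eqref{eq:inequality_Q} is then $\le M|Y|^{1/2}$), which is exactly what comparison in $UC_s$ needs; in case \ref{case2}, boundedness of $\mathcal{C}$ forces $\mathcal{V}$ to be bounded, placing it in $BUC_s$, and the localization available in the bounded setting absorbs the $\|v\|$-growth. In each case the comparison theorem orders any sub- and supersolution by their terminal data, whence uniqueness.

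The crux is the comparison principle in infinite dimensions: the penalization $\|v-w\|^2/2\epsilon$ used in the doubling of variables no longer has compact sublevel sets on $L^2(Y)^N$, so the perturbed functional need not attain its supremum. Following Crandall-Lions, this is handled by a perturbed optimization built on a smooth variational principle (Ekeland/Borwein-Preiss) together with the Hilbert structure, yielding near-maximizers at which the test functions are differentiable and at which the two viscosity inequalities can be combined. I would verify the precise hypotheses of their theorem and cite it rather than reproduce this machinery; the only problem-specific input needed is the quantitative bound \eqref{eq:inequality_Q}, the continuity of $\mathcal{V}$ from Proposition \ref{prop:lipschitz}, and the case-by-case boundedness.
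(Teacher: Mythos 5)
Your proposal is correct and follows essentially the same route as the paper: existence via the Dynamic Programming Principle and the standard test-function argument (the paper simply cites Theorem 2 of Section 10.3 in Evans, which you sketch in more detail), and uniqueness via the Crandall--Lions infinite-dimensional comparison theory, driven by the structural estimate \eqref{eq:inequality_Q} with the same case split (bounded $\sigma$ giving an $r$-modulus uniform in $v$, bounded $\mathcal{C}$ giving a bounded value functional). The only difference is presentational: the paper explicitly verifies hypotheses (H1)--(H4) of Theorem 1.1 in \cite{crandall1986hamilton}, constructing the concrete radial functions $\nu(v)=\tfrac12\sqrt{\|v\|^2+\tfrac12}$ and $\nu(v)=\tfrac12\ln(1+\|v\|^2)$ for the two cases, whereas you state the required structure conditions generically and defer their verification.
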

    \begin{proof}
        The proof of Theorem 2 in Section 10.3 of \cite{evans10} can be used without modification to show in our infinite dimensional setting that the value functional $\mathcal{V}$ is a viscosity solution of the terminal-value problem for the HJB-equation. More needs to be done to prove uniqueness. 
        We shall now proceed to verify the assumption (H1)-(H4) of Theorem 1.1 in \cite{crandall1986hamilton}:
        \begin{enumerate}[label=(H\arabic*)]
            \item We have shown above that the Hamiltonian $H_{\mathrm{HJB}}$ is locally Lipschitz continuous in both arguments $v, r$.
            \item $H_{\mathrm{HJB}}$ is independent of $\mathcal{V}$ so this hypothesis does not apply.
            \item \label{H3} Let $\nu : L^2(Y)^N \rightarrow \R$ be non-negative, Fréchet differentiable on $L^2(Y)^N$, with bounded Fréchet derivative $D \nu(v) \in L^2(Y)^N$ and such that $\liminf_{|v| \rightarrow \infty} \frac{\nu(v)}{|v|} > 1$. Let $\lambda > 0$. We have from \eqref{eq:inequality_Q}
            \begin{align*}
                | H_{\mathrm{HJB}}(v, r) - H_{\mathrm{HJB}}(v, r + \lambda D \nu(v) ) | \le \lambda \sup_{v \in L^2(Y)^N} \| \sigma (v) \|_{L^2(Y)^N} \| D \nu(v) \|_{L^2(Y)^N}.
            \end{align*}
            If the activation function $\sigma$ is bounded, then \ref{H3} is satisfied, for instance, for $\nu(v) = \frac{1}{2} \sqrt{\| v \|^2_{L^2(Y)^N} + \frac{1}{2}}$. Restricting to bounded viscosity solution we can weaken the assumption on the $\liminf$ of $\nu(v)$ to $\nu(v) \rightarrow \infty$ as $|v| \rightarrow \infty$. Then we can invoke Theorem 5.1 in \cite{crandall1986hamilton} and choose $\nu(v) = \frac{1}{2} \ln \left( 1 + \| v \|^2_{L^2(Y)^N} \right)$. We compute $D \nu(v) = \frac{v}{1+\| v \|^2_{L^2(Y)^N}}$ and \eqref{eq:inequality_Q} gives
            \begin{align*}
                | H_{\mathrm{HJB}}(v, r) - H_{\mathrm{HJB}}(v, r + \lambda D \nu(v) ) | \le K_1 \lambda
            \end{align*}
            where $K_1$ is independent of $v, r, \lambda$. Thus \ref{H3} is satisfied in both cases \ref{case1} and \ref{case2}.
            \item \label{H4} Set $d(v_1, v_2) = \| v_1 - v_2 \|_{L^2(Y)^N}$ and estimate for $\lambda > 0$, using \eqref{eq:inequality_Q}
            \begin{align*}
                &\big{|} H_{\mathrm{HJB}} \left( v_1, -\lambda D_{v_1} d(v_1, v_2) \right) - H_{\mathrm{HJB}} \left( v_2, -\lambda D_{v_2} d(v_1, v_2) \right) \big{|} \\
                &= \bigg{|} H_{\mathrm{HJB}} \left( v_1, -\lambda \frac{(v_1 - v_2)}{\| v_1 - v_2 \|_{L^2(Y)^N}} \right) - H_{\mathrm{HJB}} \left( v_2, -\lambda \frac{(v_1 - v_2)}{\| v_1 - v_2 \|_{L^2(Y)^N}} \right) \bigg{|} \le K_2 \lambda \| v_1 - v_2 \|_{L^2(y)^N}. 
            \end{align*}
            This verifies \ref{H4} since $K_2$ is independent of $\lambda, v_1, v_2$.
        \end{enumerate}
        \end{proof}

        To construct a feedback control we rewrite through \eqref{eq:hamiltonian}, \eqref{eq:H_HJB}
        \begin{align*}
            H_{\mathrm{HJB}}(v, r) &= \min_{(a, b) \in \mathscr{A}_{\mathrm{HJB}}} \int_Y \sum_{j=1}^N \sigma \left( a(y) - B_{b(y, \cdot)} v_j \right) r_j dy \\
            &= \min_{(a, b) \in \mathscr{A}_{\mathrm{HJB}}} H(v, r, a, b) \quad \mathrm{for} \; (v, r) \in L^2(Y)^{2N}.
        \end{align*}
        Let $(\tilde{a}, \tilde{b} ) : L^2(Y)^{2N} \rightarrow \mathscr{A}_{\mathrm{HJB}}$, where
        \begin{align*}
            (\tilde{a}(v, r), \tilde{b}(v, r) ) \in \mathrm{argmin}_{(a, b) \in \mathscr{A}_{\mathrm{HJB}}} H(v, r, a, b)
        \end{align*}
        and $(\tilde{a}(v, r), \tilde{b}(v, r) )$ is a Borel-measurable selection of the minimizer (see Proposition \ref{prop:measurability}). Then the HJB-equation reads
        \begin{align*}
            \begin{cases}
                \partial_t \mathcal{V}(v, t) + H\left(v, D_v \mathcal{V}(v, t), \tilde{a}(v, D_v \mathcal{V}(v, t)), \tilde{b}(v, D_v \mathcal{V}(v, t))\right) = 0 \\
                \mathcal{V}(v, t= T) = g(v).
            \end{cases}
        \end{align*}
        We define $\Sigma =\Sigma(v, t)(y) = \left( \Sigma_1, \dots, \Sigma_N \right) \in \R^N$ by $\Sigma_j(v, t)(y) := \sigma \left( \tilde{a}(v, D_v \mathcal{V}(v, t))(y) - \left( B_{\tilde{b}(v, D_v \mathcal{V}(v, t))} v_j \right) (y) \right)$ and rewrite the HJB-equation as
        \begin{align*}
            \partial_t \mathcal{V}(v, t) + \left( \Sigma(v, t), D_v \mathcal{V}(v, t) \right)_{L^2(Y)^N} = 0.
        \end{align*}
        Now, let $\tilde{F} = \left( \tilde{f}^{(1)}, \dots, \tilde{f}^{(N)} \right)^{\mathrm{tr}}$ and solve the forward problem for $\tilde{f}^{(j)}$, $j=1, \dots, N$:
        \begin{align*}
            \begin{cases}
                \partial_s \tilde{f}^{(j)}(y, s; t) = \sigma \left( \tilde{a}\left(\tilde{F}(\cdot, s; t), D_v \mathcal{V}(\tilde{F}(\cdot, s; t), t)\right)(y) - \left( B_{\tilde{b}\left(\tilde{F}(\cdot, s; t), D_v \mathcal{V}(\tilde{F}(\cdot, s; t), t)\right)} \tilde{f}^{(j)}(\cdot, s; t) \right) (y, s) \right) \\
                \tilde{f}^{(j)}(y, s=t; t) = v_j
            \end{cases}
        \end{align*}
        assuming that $\tilde{a}, \tilde{b}, \mathcal{V}$ are sufficiently smooth. Note that this IVP is similar to the one in \eqref{eq:forward_v}, with the key difference being that here the controls $(\tilde{a}, \tilde{b})$ depend nonlinearly on all the solutions of the forward problem $\tilde{f}^{(j)}$ and on the derivative of the value functional $D_v \mathcal{V}$, which introduces a significant degree of nonlinearity into the system. The system can be reformulated as
        \begin{align}
        \label{eq:system_Ftilde}
        \begin{cases}
            \partial_s \tilde{F}(y, s; t) = \Sigma \left( \tilde{F}(y, s; t), s \right) \\
            \tilde{F}(y, s=t; t) = v.
        \end{cases}
        \end{align}
        Formally we compute
        \begin{align*}
            \frac{d}{ds} \mathcal{V} \left( \tilde{F}(\cdot, s; t), s \right) = \partial_s \mathcal{V} \left( \tilde{F}(\cdot, s; t), s \right) + \left( \Sigma \left( \tilde{F}(\cdot, s; t), s \right), D_v \mathcal{V} \left( \tilde{F}(\cdot, s; t), s \right) \right)_{L^2(Y)^N} = 0.
        \end{align*}
        We define $J_{v, t}(a, b) := g \left( F_{a, b}(\cdot, T; t \right))$ such that $\mathcal{V}(v, t) = \inf_{(a, b) \in \mathscr{A}_{\mathrm{HJB}}} J_{v, t}(a, b)$ and the feedback controls
        \begin{align*}
            a^* = \tilde{a} \left( \tilde{F}(\cdot, s; t), D_v \mathcal{V}(\tilde{F}(\cdot, s; t), t) \right)(y), \quad b^* = \tilde{b} \left( \tilde{F}(\cdot, s; t), D_v \mathcal{V}(\tilde{F}(\cdot, s; t), t) \right)(y, z).
        \end{align*}
        Using the formal calculation from above, with $F_{a^*,  b^*} = \tilde{F}$ we get
        \begin{align*}
            J_{v, t} (a^*, b^*) &= g(\tilde{F}(\cdot, T; t) ) \\
            &= g(\tilde{F}(\cdot, T; t) ) - \int_t^T \frac{d}{ds} \mathcal{V} (\tilde{F}, s) ds \\
            &= g(\tilde{F}(\cdot, T; t) ) - \mathcal{V} (\tilde{F}(\cdot, T; t), T) + \mathcal{V} (\tilde{F}(\cdot, t; t), t ) \\
            &= g(\tilde{F}(\cdot, T; t) ) - g(\tilde{F}(\cdot, T; t) ) + \mathcal{V}(v, t) \\
            &= \mathcal{V} (v,  t) \\
            &= \inf_{(a, b) \in \mathscr{A}_{\mathrm{HJB}}} J_{v, t} (a,  b)
        \end{align*}
        Therefore, if the formal arguments can be made rigorous, we conclude that $(a^*, b^*)$ is an optimal (feedback) control for 
        \begin{align}
            \label{eq:final_system}
            \begin{cases}
                \partial_s F = \sigma \left( a e - B_b F \right), \quad t \le s \le T \\
                F(y, s=t; t) = v
            \end{cases}
        \end{align}
        minimizing the loss functional
        \begin{align}
            \label{eq:loss_feedback}
            J_{v, t}(a, b) = g \left( F(\cdot, T; t) \right).
        \end{align}
        We sum up the above discussion in the following Proposition.
        \begin{proposition}
            Assume 
            \begin{enumerate}[label=(\roman*)]
                \item $(\tilde{a}, \tilde{b}) : L^2(Y)^{2N} \rightarrow \mathscr{A}_{\mathrm{HJB}}$, where $(\tilde{a}(v, r), \tilde{b}(v, r) ) \in \mathrm{argmin}_{(a, b) \in \mathscr{A}_{\mathrm{HJB}}} H(v, r, a, b)$ for all $(v, r) \in L^2(Y)^{2N}$, is a Borel-measurable selection,
                \item the value function $\mathcal{V}$ is locally Lipschitz on $L^2(Y)^N \times [0, T]$,
                \item \eqref{eq:system_Ftilde} has a solution $\tilde{F} \in L^1 ( (0, t); L^2(Y)^N )$ for every $t \in (0, T)$,
                \item $\mathcal{V}$ is Gateaux differentiable on the arc $\{ (\tilde{F}(v, s; t), s) : v \in L^2(Y)^N, s \in [t, T] \}$ for all $t \in [0, T]$.
            \end{enumerate}
            Then $(a^*, b^*)$ is an optimal feedback control of \eqref{eq:final_system}, \eqref{eq:loss_feedback}.    
        \end{proposition}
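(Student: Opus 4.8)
The plan is to make rigorous the formal verification computation displayed just above the statement: show that $\mathcal{V}$ is constant along the closed-loop trajectory $\tilde F$, then integrate in $s$ and use the terminal condition to identify $J_{v,t}(a^*,b^*)$ with the infimum $\mathcal{V}(v,t)$. First I would record admissibility together with the identification $F_{a^*,b^*}=\tilde F$. By hypothesis (i) the selection $(\tilde a,\tilde b)$ is Borel-measurable with values in $\mathscr{A}_{\mathrm{HJB}}$, and by (iii) the closed-loop system \eqref{eq:system_Ftilde} has a solution $\tilde F$; composing the selection with $s\mapsto(\tilde F(\cdot,s;t),D_v\mathcal{V}(\tilde F(\cdot,s;t),s))$ produces a jointly measurable map $(y,s)\mapsto(a^*,b^*)$ taking values in $A$ a.e., so that $(a^*,b^*)\in\mathscr{C}$. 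Since by construction $\partial_s\tilde F=\Sigma(\tilde F,s)=\sigma(a^*e-B_{b^*}\tilde F)$, the function $\tilde F$ is exactly the forward trajectory driven by $(a^*,b^*)$, whence $J_{v,t}(a^*,b^*)=g(\tilde F(\cdot,T;t))$.

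Next I would set up the chain rule. The a priori estimate \eqref{eq:estimate_for_f} of Theorem \ref{theorem:well-posedness_forward} confines $\tilde F(\cdot,s;t)$ to a fixed ball of $L^2(Y)^N$ for $s\in[t,T]$, so $\Sigma(\tilde F,s)$ is bounded in $L^2(Y)^N$ and the curve $\gamma(s):=(\tilde F(\cdot,s;t),s)$ is Lipschitz in $L^2(Y)^N\times[0,T]$ with $\gamma'(s)=(\Sigma(\tilde F,s),1)$ for a.e. $s$. By (ii) the value functional $\mathcal{V}$ is locally Lipschitz, so $s\mapsto\mathcal{V}(\gamma(s))$ is Lipschitz, hence absolutely continuous. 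By (iv) $\mathcal{V}$ is Gateaux differentiable along the arc $\gamma$; combined with local Lipschitz continuity this upgrades to Hadamard differentiability at each such point — the standard fact that, for locally Lipschitz functionals, Gateaux and Hadamard differentiability coincide — which is precisely the notion validating the chain rule for compositions with a merely Lipschitz curve.

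With this in place, at a.e. $s\in[t,T]$ the chain rule yields
\begin{align*}
\frac{d}{ds}\mathcal{V}(\gamma(s)) = \partial_t\mathcal{V}(\tilde F,s) + \left(\Sigma(\tilde F,s),D_v\mathcal{V}(\tilde F,s)\right)_{L^2(Y)^N} = 0,
\end{align*}
the second equality being the feedback form of the HJB equation \eqref{eq:HJB}. Integrating this absolutely continuous, a.e.-vanishing-derivative map over $[t,T]$ gives $\mathcal{V}(\tilde F(\cdot,T;t),T)=\mathcal{V}(\tilde F(\cdot,t;t),t)=\mathcal{V}(v,t)$, and the terminal condition \eqref{eq:terminal_condition_v} rewrites the left-hand side as $g(\tilde F(\cdot,T;t))=J_{v,t}(a^*,b^*)$. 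Since by definition $\mathcal{V}(v,t)=\inf_{(a,b)\in\mathscr{A}_{\mathrm{HJB}}}J_{v,t}(a,b)$, this shows that $(a^*,b^*)$ attains the infimum and is therefore an optimal feedback control of \eqref{eq:final_system}, \eqref{eq:loss_feedback}.

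The main obstacle is the rigorous chain rule in this infinite-dimensional, non-smooth setting: Gateaux differentiability does not by itself pass through composition with a curve, so the crux is to argue that the derivative granted by (iv) is in fact Hadamard along $\gamma$ and to control the interaction of the only-Lipschitz curve $s\mapsto\tilde F(\cdot,s;t)$ with it — this is exactly where hypothesis (ii) and the boundedness of $\Sigma$ along the trajectory are used. A secondary delicate point is the measurability and regularity in $s$ of $D_v\mathcal{V}(\tilde F(\cdot,s;t),s)$ entering the feedback law, which must be established for the closed-loop system and the integral identity to be well-defined.
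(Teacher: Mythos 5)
Your proposal is correct and follows essentially the same route as the paper: the verification argument showing that $\mathcal{V}$ is constant along the closed-loop trajectory $\tilde{F}$, integrating $\frac{d}{ds}\mathcal{V}(\tilde{F}(\cdot,s;t),s)=0$ from $t$ to $T$ and using the terminal condition to identify $J_{v,t}(a^*,b^*)$ with $\mathcal{V}(v,t)=\inf J_{v,t}$. The paper presents this only as a formal computation preceding the proposition (whose hypotheses are exactly the assumptions needed to legitimize it), so your added discussion of the Gateaux-to-Hadamard upgrade for Lipschitz functionals and of the absolute continuity of $s\mapsto\mathcal{V}(\gamma(s))$ supplies precisely the rigor the paper leaves implicit.
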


        Going back to our original DNN problem, we need to set $v_j (y)  = f^{(j)}_I (y)$ and $t=0$.  Clearly, it seems like an overkill to solve the high dimensional HJB-equation (even before the continuum limit of Section \ref{section:forward_back} it 'lives' on $\R^{NM} \times [0,T]$ ) in order to find optimal feedback controls, but we remark that there are also advantages to the HJB approach. First of all, once the value functional $\mathcal{V}$ is known, it is easy to change the initial training data set, new optimal feedback controls are easily computed from \eqref{eq:system_Ftilde}. Moreover we remark that the numerical solution of the high dimensional HJB equation is the subject of intense scrutiny and various fast algorithms have been established \cite{nakamura2021adaptive, nusken2021solving}.
\end{section}

\bibliographystyle{plain}
\bibliography{bibliography}

\end{document}